\newcommand{\addpoint}[1]{#1\ ---\ }
\newtheoremstyle{thm}% name
     {1.5ex plus .3ex minus .1ex}%      Space above
     {1ex plus .3ex minus .1ex}%      Space below
     {\itshape}%         Body font
     {}%         Indent amount (empty = no indent, \parindent = para indent)
     {\bfseries\sffamily}% Thm head font
     {---}%        Punctuation after thm head
     {0em}%     Space after thm head: " " = normal interword space;
\theoremstyle{thm}
\newtheorem{definition}{Definition}[section]
\newtheorem{theorem}[definition]{Theorem}
\newtheorem{lemma}[definition]{Lemma}
\newtheorem{proposition}[definition]{Proposition}
\newtheorem{corollary}[definition]{Corollary}
\newtheoremstyle{note}% name
     {1ex plus .3ex minus .1ex}%      Space above
     {1ex plus .3ex minus .1ex}%      Space below
     {}%         Body font
     {}%         Indent amount (empty = no indent, \parindent = para indent)
     {\itshape}% Thm head font
     {.}%        Punctuation after thm head
     {1em}%     Space after thm head: " " = normal interword space;
\theoremstyle{note}
\newlength{\remaining}\setlength{\remaining}{\textwidth}\addtolength{\remaining}{-2em}%
\newenvironment{intermediate}[1]
	{\par\medskip\noindent\makebox[2em][t]{\hfill#1\hfill}\begin{minipage}[t]{\the\remaining}\itshape}
          {\end{minipage}\par\medskip}
\newcommand{\tq}{\;:\;}
\newcommand{\iid}{{\normalfont\slshape\textsf{\small i.i.d.}}}
\def\calli#1{\expandafter\def\csname
  #1\endcsname{\mathcal{#1}}}	% Calligraphic I -> \I
\def\sets#1{\expandafter\def\csname
  bb#1\endcsname{\mathbb{#1}}}	% Blackboard Bold R -> \bbR
\def\rebar#1{\expandafter\def\csname #1bar\endcsname{\overline{\csname
      #1\endcsname}}}		% Overline w -> \wbar
\def\gothify#1{\expandafter\def\csname
  #1#1#1\endcsname{\mathfrak{#1}}}	% Gothic F -> \FFF
\newcommand{\C}{\mathscr{C}}
\newcommand{\Cnv}{\mathfrak{C}}
\newcommand{\up}[1]{\,\uparrow #1}
\newcommand{\pr}{\mathbb{P}}
\newcommand{\esp}{\mathbb{E}}
\newcommand{\Cstar}{\mathfrak{C}}
\newcommand{\slgb}{\mbox{$\sigma$-al}\-ge\-bra}
\newcommand{\pas}{\text{$\pr$-a.s.}}
\newcommand{\m}[1]{|#1|}
\newcommand{\CF}{\textsf{CF}}
\newcommand{\BM}{\partial\M}
\newcommand{\lub}{\textsl{lub}}
\newcommand{\rest}[1]{\bigl|_{#1}}
\newcommand{\height}{\tau}
\renewcommand{\ast}{\textsf{\textsl{AST}}}
\newcommand{\un}{\mathbf{1}}
\newcommand{\scal}[2]{\langle#1,#2\rangle}
\newlength{\firstl}
\newlength{\secondl}
\newlength{\finall}
\newcommand{\firstc}{A Cut-Invariant Law of Large}
\newcommand{\secondc}{Numbers for Random Heaps}
\newcommand{\format}[1]{\makebox[\finall][s]{#1}} \newsavebox{\firstb}
\newsavebox{\secondb}
\newcommand{\lfaast}{%
  \savebox{\firstb}{\firstc}%
  \savebox{\secondb}{\secondc}%
  \setlength{\firstl}{\wd\firstb}%
  \setlength{\secondl}{\wd\secondb}%
  \ifnum\firstl>\secondl\setlength{\finall}{\firstl}\else
  \setlength{\finall}{\secondl}\fi\par
  \format{\firstc}\\
\format{\secondc}%
  }
\begin{document}
\mainmatter
\strut\vspace{-4em}
\begin{center}
\huge\bfseries\sffamily
\lfaast\par
\normalfont
\end{center}

\bigskip

\begin{center}
\begin{tabular}{c}
  \Large\sffamily Samy Abbes\\
  \small University Paris Diderot -- Paris~7\\
  \small CNRS Laboratory PPS (UMR 7126)\\
  \small Paris, France\\
  \small
  \ttfamily\footnotesize  samy.abbes@univ-paris-diderot.fr
\end{tabular}

\bigskip\bigskip
February 2015\par\bigskip\bigskip
\end{center}

\begin{abstract}
  Heap monoids equipped with Bernoulli measures are a model of
  probabilistic asynchronous systems.  We introduce in this framework
  the notion of asynchronous stopping time, which is analogous to the
  notion of stopping time for classical probabilistic processes. A
  Strong Bernoulli property is proved. A notion of cut-invariance is
  formulated for convergent ergodic means. Then a version of the
  Strong law of large numbers is proved for heap monoids with
  Bernoulli measures. Finally, we study a sub-additive version of the
  Law of large numbers in this framework based on Kingman sub-additive
  Ergodic Theorem.
\end{abstract}

\section{Introduction}
\label{sec:introduction}

Heaps of pieces are combinatorial structures that appear in several
domains of Combinatorics and Computer science. They were first studied
under the algebraic presentation of free partially commutative
monoids~\cite{cartier69}, also called trace monoids.  The visual
presentation as \emph{heaps of pieces} was introduced by
Viennot~\cite{viennot86}. Their application to computation models
relies on their ability to model in an intrinsic way the
\emph{asynchrony} of actions, that is to say, the fact that different
actions depending on disjoint resources may occur
concurrently~\cite{diekert90,diekert95}.

Several probabilistic models attached to heaps have been studied. A
study initiated by Vershik~\cite{VNBi,malyutin05} concerns the limit
behavior of random walks defined on heaps of pieces, when the size of
the heap monoid increases. In an other
approach~\cite{saheb89,krob03,bertoni08}, authors consider various
families of finite uniform probability distributions: on heaps of
size~$n$ and on heaps of height~$n$.

In this paper, we adopt a different point of view, and consider
Bernoulli measures on the \emph{boundary at infinity} of the heap
monoid. The elements of the boundary at infinity identify with
infinite heaps. The existence of Bernoulli measures for heap monoids
was proved in a joint work by the author and
J.~Mairesse~\cite{abbesmair14}. Bernoulli measures have the property
of being multiplicative with respect to the monoid structure of the
heap monoid, and they enjoy several properties related to the
combinatorial structure of heap monoids. In particular, elements of a
heap monoid are know to have a canonical normal form, called the
Cartier-Foata decomposition. Under a Bernoulli measure, the random
elements that successively occur in the decomposition form a Markov
chain, of which both the initial distribution and the transition
matrix have specific expressions, formulated through the combinatorial
tool of the M\"obius transform.

This paper introduces a formalism in order to express a Law of large
numbers for heap monoids under Bernoulli measures. A central
difficulty with heap monoids is that a given heap has several
presentations as a succession of pieces piled up one upon
another. Different presentations of the same heap differ in the order
of occurrences of certain pieces. The same issue occurs of course with
infinite heaps.  Consequently, there is no natural identification
between a given infinite heap, and an infinite sequence of
pieces. Nevertheless, if $\phi:\Sigma\to\bbR$ is a real valued
function defined on the set of basic pieces, we wish to obtain
asymptotic estimates for ergodic sums of the form
$S_n\phi=\phi(a_1)+\ldots+ \phi(a_n)$, where $(a_1,a_2,\ldots)$ is
\emph{one} presentation of a typical infinite heap~$\xi$.

For this purpose, we introduce a notion of \emph{cut} for infinite
heaps. Cuts are shown to share several properties with classical
stopping times, hence we actually call them \emph{asynchronous
  stopping times\/} (\ast). An \ast\ allows to select from any given
infinite heap~$\xi$, a sub-heap of~$\xi$. The analogy with standard
stopping times is this one: for instance the first hitting time $T$ of
a given state $a$ of a Markov chain takes an infinite path
$\omega=(x_1,x_2,\ldots)$ as input, and returns an
integer~$T(\omega)$. All the properties of the stopping time $T$ can
be interpreted when considering instead of the integer~$T(\omega)$,
the \emph{sub-path} $(x_1,\ldots,x_{T(\omega)})$. Hence a stopping
time can be seen as a particular kind of mapping from infinite paths
to paths. Similarly, an \ast\ is a particular kind of mapping from
infinite heaps to heaps.

Asynchronous stopping times can be iterated, just as usual stopping
times can be iterated. If $V$ is an \ast, we define the associated
sequence of iterated stopping times $(V_n)_{n\geq1}$ by recursively
piling up $V$-shaped heaps. The associated ergodic sums are then
defined by $S_{V,n}\phi=\phi V_n$\,, extending the action of $\phi$
from pieces to heaps by additivity. Let $\un$ be the constant function
equal to $1$ on every piece. By additivity, $\un V$ is the number of
pieces within heap~$V$. Then we show that, for every function~$\phi$,
the \emph{ergodic means}:
\begin{equation}
  \label{eq:4}
  \frac{S_{V,n}\phi}{S_{V,n}\un}
\end{equation}
have almost surely a limit when $n$ goes to infinity; and that this
limit \emph{does not depend on~$V$}. Furthermore, the limit can be
computed by using the stationary measure of the Markov chain given by
the elements of the Cartier-Foata decomposition.

Consider the case where the function $\phi$ is the characteristic
function of a certain piece, hence gives value $1$ to that piece and
$0$ to all other pieces.  Then the limit of the ergodic
means~(\ref{eq:4}) appears as the asymptotic density in large heaps of
the selected piece. The invariance with respect to the \ast\ $V$ means
that, whatever cut shape $V$ we choose, when one measures the density
of presence of a given piece within sub-heaps recursively obtained by
adding ``$V$-shaped'' heaps, the resulting asymptotic density is
always the same. This constitutes the cut-invariant Law of large
numbers.

We also give a sub-additive variant of the Law of large numbers for
random heaps. It is motivated by the existence of an interesting
sub-additive function on heaps, namely their \emph{height}.

We have included a preliminary section which illustrates most of the
notions on a very simple case, allowing to do all the calculations by
hand. In particular, the cut-invariance is demonstrated by performing
simple computations using only geometric laws. Later in the paper, all
the specific computations for this example will be re-interpreted
under the light of Bernoulli measures on heap monoids.

The paper is organized as follows. Section~\ref{sec:cut-invariance-an}
is the preliminary example section, which relies on no theoretical
material at all. Section~\ref{sec:heap-mono-bern} introduces the
background on heap monoids and Bernoulli
measures. Section~\ref{sec:asynchr-stopp-times} introduces
asynchronous stopping times for heap
monoids. Section~\ref{sec:iter-asynchr-stopp} introduces the iteration
of asynchronous stopping times.  Section~\ref{sec:cut-invariant-law}
states and proves the cut-invariant Law of large
numbers. Section~\ref{sec:cut-invariant-sub} is devoted to the
sub-additive variant of the Law of large numbers.

\section{Cut-Invariance on an Example}
\label{sec:cut-invariance-an}

The purpose of this preliminary section is twofold. First it will help
motivating the model of Bernoulli measures on trace monoids, which
will appear as a natural probabilistic model for systems involving
asynchronous actions. Second, it will illustrate that asymptotic
quantities relative to the model may be computed according to
different presentations, corresponding to different cut shapes of
random heaps.  The interesting point is that, whatever choice is made
for the shape of cuts, the associated asymptotic quantities are
invariant. The rest of the paper develops theoretical results
explaining this invariance, which we merely observe on a simple
example in this section.

\medskip Consider two communicating devices $A$ and~$B$. Device $A$
may perform some actions on its own. These actions will be called
\emph{type $a$ actions}. Similarly, device $B$ may perform actions of
type $b$ on its own. Finally, both devices may perform together a
synchronizing action of type~$c$, involving communication on both
sides---a sort of \emph{check-hand} action.

Consider the following simple probabilistic protocol, involving two
fixed probabilistic parameters $\lambda,\lambda'\in(0,1)$.
\begin{enumerate}
\item\label{item:1} Device $A$ and device $B$ perform actions of type
  $a$ and $b$ respectively, in an asynchronous and probabilistically
  independent way. The number $N_a$ of occurrences of type $a$ actions
  and the number $N_b$ of occurrences of type $b$ actions follow
  geometric laws with parameters $\lambda$ and~$\lambda'$
  respectively. Hence:
  \begin{equation}
    \label{eq:1}
    \forall k,k'\geq0\qquad\pr(N_a=k,\;N_b=k')=\lambda(1-\lambda)^k\cdot\lambda'(1-\lambda')^{k'}\,.
  \end{equation}
\item\label{item:2} Then devices $A$ and $B$ perform a synchronizing
  action of type~$c$, acknowledging that they have completed their
  local actions.
\item Go to~\ref{item:1}.
\end{enumerate}

We say that Steps~\ref{item:1}--\ref{item:2} form a \emph{round} of
the protocol.  The successive geometric variables that will occur when
executing several rounds of the protocol are assumed to be
independent.

The question that will guide us throughout this study is the
following: what are the asymptotic densities of actions of type $a$,
$b$ and~$c$? Hence, we are looking for non-negative quantities
$\gamma_a,\gamma_b,\gamma_c$ such that $\gamma_a+\gamma_b+\gamma_c=1$,
and that represent the average ratio of each type of action among all
three possible types. We shall see that there are various possible
definitions for the density vector
$\gamma=\begin{pmatrix}\gamma_a&\gamma_b&\gamma_c\end{pmatrix}$, and
we will observe by performing the different computations, that all
definitions lead to the same result. The core of the paper will
provide a general framework in which the notion of cut-invariance
gives a deep explanation for the equality of the results.  The above
protocol is the simplest possible example of this kind.

Before we suggest possible definitions for the asymptotic density
vector, it is customary to interpret the executions of the above
protocol with a \emph{heap of pieces} model. For this, associate
dominoes to each type of action~$a$, $b$ and~$c$. The occurrence of an
action corresponds to a domino of the associated type falling from top
to bottom until it either reaches the ground or a previously piled
domino. Asynchrony of types $a$ and $b$ actions is rendered by letting
dominoes of type $a$ and $b$ falling according to parallel, separated
lanes; whereas dominoes of type $c$ are blocking for dominoes of types
$a$ and~$b$, which renders the synchronization role of type $c$
actions. A typical first round of the protocol, in the heap model,
corresponds to a heap as depicted in Figure~\ref{fig:poqpjq}--$(a)$
for $N_a=1$ and $N_b=2$. The execution of several rounds of the
protocol makes the heap growing up, as depicted in
Figure~\ref{fig:poqpjq}--$(b)$.

Letting the protocol execute without limit of time yields random
\emph{infinite heaps}. Let $\pr$ denote the probability measure that
equips the canonical space associated to the execution of infinitely
many rounds of the protocol. The measure $\pr$ can also be seen as the
law of the infinite heap resulting from the execution of the protocol.

\begin{figure}
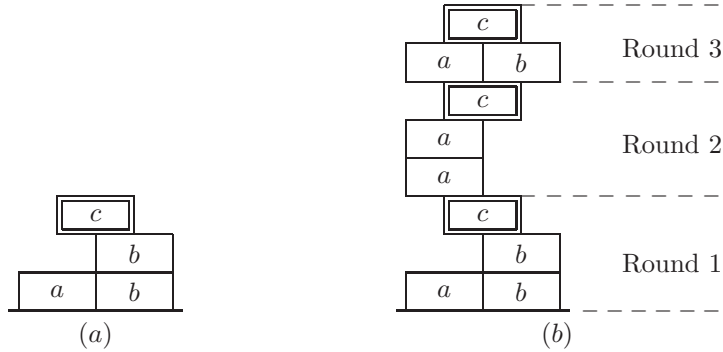

  \centering
\begin{gather*}
\begin{array}{ccc}
   \xy % abbc
<.09em,0em>:
0="G",
"G"+(16,8)*{a},
"G";"G"+(32,0)**@{-};"G"+(32,16)**@{-};"G"+(0,16)**@{-};"G"**@{-},
(32,0)="G",
"G"+(16,8)*{b},
"G";"G"+(32,0)**@{-};"G"+(32,16)**@{-};"G"+(0,16)**@{-};"G"**@{-},
(32,16)="G",
"G"+(16,8)*{b},
"G";"G"+(32,0)**@{-};"G"+(32,16)**@{-};"G"+(0,16)**@{-};"G"**@{-},
(16,32)="G",
"G"+(16,8)*{c},
"G";"G"+(32,0)**@{-};"G"+(32,16)**@{-};"G"+(0,16)**@{-};"G"**@{-},
"G"+(2,2);"G"+(30,2)**@{-};"G"+(30,14)**@{-};"G"+(2,14)**@{-};"G"+(2,2)**@{-},
(-4,0);(68,0)**@{-},
\endxy
&\hspace{6em}\strut&
  \xy % abbc
<.09em,0em>:
(-4,0);(68,0)**@{-},
0="G",
"G"+(16,8)*{a},
"G";"G"+(32,0)**@{-};"G"+(32,16)**@{-};"G"+(0,16)**@{-};"G"**@{-},
(32,0)="G",
"G"+(16,8)*{b},
"G";"G"+(32,0)**@{-};"G"+(32,16)**@{-};"G"+(0,16)**@{-};"G"**@{-},
(32,16)="G",
"G"+(16,8)*{b},
"G";"G"+(32,0)**@{-};"G"+(32,16)**@{-};"G"+(0,16)**@{-};"G"**@{-},
(16,32)="G",
"G"+(16,8)*{c},
"G";"G"+(32,0)**@{-};"G"+(32,16)**@{-};"G"+(0,16)**@{-};"G"**@{-},
"G"+(2,2);"G"+(30,2)**@{-};"G"+(30,14)**@{-};"G"+(2,14)**@{-};"G"+(2,2)**@{-},
(0,48)="G",
"G"+(16,8)*{a},
"G";"G"+(32,0)**@{-};"G"+(32,16)**@{-};"G"+(0,16)**@{-};"G"**@{-},
(0,64)="G",
"G"+(16,8)*{a},
"G";"G"+(32,0)**@{-};"G"+(32,16)**@{-};"G"+(0,16)**@{-};"G"**@{-},
(16,80)="G",
"G"+(16,8)*{c},
"G";"G"+(32,0)**@{-};"G"+(32,16)**@{-};"G"+(0,16)**@{-};"G"**@{-},
"G"+(2,2);"G"+(30,2)**@{-};"G"+(30,14)**@{-};"G"+(2,14)**@{-};"G"+(2,2)**@{-},
(0,96)="G",
"G"+(16,8)*{a},
"G";"G"+(32,0)**@{-};"G"+(32,16)**@{-};"G"+(0,16)**@{-};"G"**@{-},
(32,96)="G",
"G"+(16,8)*{b},
"G";"G"+(32,0)**@{-};"G"+(32,16)**@{-};"G"+(0,16)**@{-};"G"**@{-},
(16,112)="G",
"G"+(16,8)*{c},
"G";"G"+(32,0)**@{-};"G"+(32,16)**@{-};"G"+(0,16)**@{-};"G"**@{-},
"G"+(2,2);"G"+(30,2)**@{-};"G"+(30,14)**@{-};"G"+(2,14)**@{-};"G"+(2,2)**@{-},
(74,0);(130,0)**@{--},
(48,48);(130,48)**@{--},
(48,96);(130,96)**@{--},
(48,128);(130,128)**@{--},
(90,20)*{\rlap{Round $1$}},
(90,70)*{\rlap{Round $2$}},
(90,110)*{\rlap{Round $3$}},
\endxy
\\
(a)&&(b)
\end{array}
\end{gather*}
\caption{\textsl{Heaps of pieces corresponding to the execution of: $(a)$~the
  first round of the protocol, $(b)$~the three first rounds of the protocol.}}
  \label{fig:poqpjq}
\end{figure}

It is important to observe that the law $\pr$ cannot be reached by the
execution of any Markov chain with three states $a,b,c$ (proof left to
the reader for this particular example; or to be deduced from the
results of~\S~\ref{sec:bernoulli-measures}). In particular, the
estimation of the asymptotic quantities that we perform below do not
result from a straightforward translation into a Markov chain model.

If $N$ denotes the total number of pieces at Round~1 of the protocol,
one has:
\begin{align*}
  N&=N_a+N_b+N_c\,,&\text{with }N_c&=1\,.
\end{align*}

Each round of the protocol corresponding to a fresh pair $(N_a,N_b)$,
it is natural to define the asymptotic density vector~$\gamma$ as:
\begin{align}
\label{eq:2}
  \gamma_a&=\frac{\esp N_a}{\esp N}\,,&
  \gamma_b&=\frac{\esp N_b}{\esp N}\,,&
  \gamma_c&=\frac{\esp N_c}{\esp N}=\frac1{\esp N}\,,
\end{align}
where $\esp$ denotes the expectation with respect to
probability~$\pr$.

Since $N_a$ and $N_b$ follow geometric laws on the one hand, and since
$\esp N=1+\esp N_a+\esp N_b$ on the other hand, the computation of
$\gamma=\begin{pmatrix}\gamma_a&\gamma_b&\gamma_c\end{pmatrix}$ is
immediate and yields:
\begin{align}
  \label{eq:3}
\gamma_a&=\frac{\lambda'(1-\lambda)}{\lambda+\lambda'-\lambda\lambda'}\,,&
\gamma_b&=\frac{\lambda(1-\lambda')}{\lambda+\lambda'-\lambda\lambda'}\,,&
\gamma_c&=\frac{\lambda\lambda'}{\lambda+\lambda'-\lambda\lambda'}\,.
\end{align}

The description we have given of the protocol has naturally lead us to
the definition~(\ref{eq:2}) for the density vector~$\gamma$. However,
abstracting from the description of the protocol and focusing on the
heap model only, we realize that dominoes $c$ play a particular role,
which corresponds to an asymmetry between the three types of
actions. The special role of $c$ lies in the following: at each round,
the formed pile ends up with a type $c$ domino. Hence, the specific
formulation of the protocol that we have adopted above can be
rephrased as follows:
\begin{enumerate}
\item\label{item:3} Consider the probability distribution $\pr$ over random heaps.
\item\label{item:4} Recursively cut an infinite random heap, say $\xi$ distributed
  according to~$\pr$, by selecting the successive occurrences of type
  $c$ dominoes in~$\xi$, and cutting apart the associated sub-heaps, as in
  Figure~\ref{fig:poqpjq}--$(b)$.
\end{enumerate}

In this new formulation, point~\ref{item:3} is now intrinsic; while
only point~\ref{item:4} relies on a special cut shape. Henceforth, the
following questions are natural: if we change the cut shape in
point~\ref{item:4}, and if we compute the new associated densities of
pieces, say
$\gamma'=\begin{pmatrix}\gamma'_a&\gamma'_b&\gamma'_c\end{pmatrix}$,
is it true that $\gamma=\gamma'$?  And is there a more symmetric model
to describe random heaps, that would not give a special role to any
type of domino, and that could also provide a way for computing
densities?

Before we enter the core of the topic in the next sections of the
paper, we shall merely conclude this introductory section by defining
and computing indeed an alternative density vector~$\gamma'$, and
obtain by simple computation the equality $\gamma=\gamma'$\,.

\begin{figure}
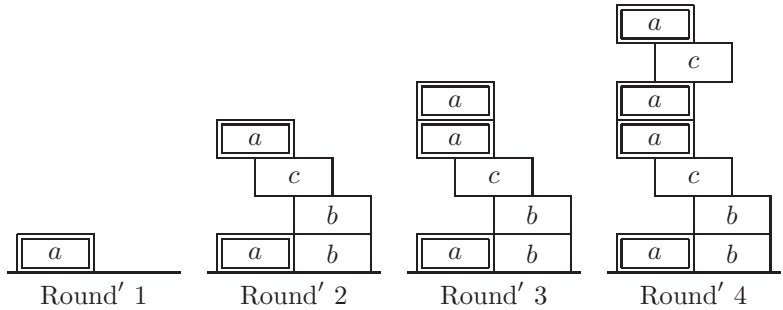

  \centering
  \begin{gather*}
\begin{array}{cccc}
   \xy % abbc
<.09em,0em>:
(-4,0);(68,0)**@{-},
0="G",
"G"+(16,8)*{a},
"G";"G"+(32,0)**@{-};"G"+(32,16)**@{-};"G"+(0,16)**@{-};"G"**@{-},
"G"+(2,2);"G"+(30,2)**@{-};"G"+(30,14)**@{-};"G"+(2,14)**@{-};"G"+(2,2)**@{-},
\endxy
&
  \xy % abbc
<.09em,0em>:
(-4,0);(68,0)**@{-},
0="G",
"G"+(16,8)*{a},
"G";"G"+(32,0)**@{-};"G"+(32,16)**@{-};"G"+(0,16)**@{-};"G"**@{-},
"G"+(2,2);"G"+(30,2)**@{-};"G"+(30,14)**@{-};"G"+(2,14)**@{-};"G"+(2,2)**@{-},
(32,0)="G",
"G"+(16,8)*{b},
"G";"G"+(32,0)**@{-};"G"+(32,16)**@{-};"G"+(0,16)**@{-};"G"**@{-},
(32,16)="G",
"G"+(16,8)*{b},
"G";"G"+(32,0)**@{-};"G"+(32,16)**@{-};"G"+(0,16)**@{-};"G"**@{-},
(16,32)="G",
"G"+(16,8)*{c},
"G";"G"+(32,0)**@{-};"G"+(32,16)**@{-};"G"+(0,16)**@{-};"G"**@{-},
(0,48)="G",
"G"+(16,8)*{a},
"G";"G"+(32,0)**@{-};"G"+(32,16)**@{-};"G"+(0,16)**@{-};"G"**@{-},
"G"+(2,2);"G"+(30,2)**@{-};"G"+(30,14)**@{-};"G"+(2,14)**@{-};"G"+(2,2)**@{-},
\endxy
&
  \xy % abbc
<.09em,0em>:
(-4,0);(68,0)**@{-},
0="G",
"G"+(16,8)*{a},
"G";"G"+(32,0)**@{-};"G"+(32,16)**@{-};"G"+(0,16)**@{-};"G"**@{-},
"G"+(2,2);"G"+(30,2)**@{-};"G"+(30,14)**@{-};"G"+(2,14)**@{-};"G"+(2,2)**@{-},
(32,0)="G",
"G"+(16,8)*{b},
"G";"G"+(32,0)**@{-};"G"+(32,16)**@{-};"G"+(0,16)**@{-};"G"**@{-},
(32,16)="G",
"G"+(16,8)*{b},
"G";"G"+(32,0)**@{-};"G"+(32,16)**@{-};"G"+(0,16)**@{-};"G"**@{-},
(16,32)="G",
"G"+(16,8)*{c},
"G";"G"+(32,0)**@{-};"G"+(32,16)**@{-};"G"+(0,16)**@{-};"G"**@{-},
(0,48)="G",
"G"+(16,8)*{a},
"G";"G"+(32,0)**@{-};"G"+(32,16)**@{-};"G"+(0,16)**@{-};"G"**@{-},
"G"+(2,2);"G"+(30,2)**@{-};"G"+(30,14)**@{-};"G"+(2,14)**@{-};"G"+(2,2)**@{-},
(0,64)="G",
"G"+(16,8)*{a},
"G";"G"+(32,0)**@{-};"G"+(32,16)**@{-};"G"+(0,16)**@{-};"G"**@{-},
"G"+(2,2);"G"+(30,2)**@{-};"G"+(30,14)**@{-};"G"+(2,14)**@{-};"G"+(2,2)**@{-},
\endxy
&
  \xy % abbc
<.09em,0em>:
(-4,0);(68,0)**@{-},
0="G",
"G"+(16,8)*{a},
"G";"G"+(32,0)**@{-};"G"+(32,16)**@{-};"G"+(0,16)**@{-};"G"**@{-},
"G"+(2,2);"G"+(30,2)**@{-};"G"+(30,14)**@{-};"G"+(2,14)**@{-};"G"+(2,2)**@{-},
(32,0)="G",
"G"+(16,8)*{b},
"G";"G"+(32,0)**@{-};"G"+(32,16)**@{-};"G"+(0,16)**@{-};"G"**@{-},
(32,16)="G",
"G"+(16,8)*{b},
"G";"G"+(32,0)**@{-};"G"+(32,16)**@{-};"G"+(0,16)**@{-};"G"**@{-},
(16,32)="G",
"G"+(16,8)*{c},
"G";"G"+(32,0)**@{-};"G"+(32,16)**@{-};"G"+(0,16)**@{-};"G"**@{-},
(0,48)="G",
"G"+(16,8)*{a},
"G";"G"+(32,0)**@{-};"G"+(32,16)**@{-};"G"+(0,16)**@{-};"G"**@{-},
"G"+(2,2);"G"+(30,2)**@{-};"G"+(30,14)**@{-};"G"+(2,14)**@{-};"G"+(2,2)**@{-},
(0,64)="G",
"G"+(16,8)*{a},
"G";"G"+(32,0)**@{-};"G"+(32,16)**@{-};"G"+(0,16)**@{-};"G"**@{-},
"G"+(2,2);"G"+(30,2)**@{-};"G"+(30,14)**@{-};"G"+(2,14)**@{-};"G"+(2,2)**@{-},
(16,80)="G",
"G"+(16,8)*{c},
"G";"G"+(32,0)**@{-};"G"+(32,16)**@{-};"G"+(0,16)**@{-};"G"**@{-},
(0,96)="G",
"G"+(16,8)*{a},
"G";"G"+(32,0)**@{-};"G"+(32,16)**@{-};"G"+(0,16)**@{-};"G"**@{-},
"G"+(2,2);"G"+(30,2)**@{-};"G"+(30,14)**@{-};"G"+(2,14)**@{-};"G"+(2,2)**@{-},
\endxy
\\
\text{Round}'\ 1&\text{Round}'\ 2&\text{Round}'\ 3&\text{Round}'\ 4
\end{array}
\end{gather*}
  \caption{\textsl{Cutting heaps relatively to type $a$ dominoes.}}
  \label{fig:paopajka}
\end{figure}

We consider the variant where heaps are cut at ``first occurrence'' of
type $a$ dominoes. We illustrate in Figure~\ref{fig:paopajka} the
successive new rounds, corresponding to the same heap that we already
depicted in Figure~\ref{fig:poqpjq}. Observe that each new round
involves a finite but unbounded number of rounds of the original
protocol.

Let $V'$ denote the random heap obtained by cutting an infinite heap
at the first occurrence of a domino of type~$a$ , which is defined
with $\pr$-probability~$1$. Denote by $|V'|$ the number of pieces
in~$V'$. Denote also by $|V'|_a$ the number of occurrences of
piece $a$ in~$V'$\,, and so on for $|V'|_b$ and for~$|V'|_c$\,, so
that $|V'|=|V'|_a+|V'|_b+|V'|_c$ holds. By construction, $|V'|_a=1$
holds $\pr$-almost surely.  We define the new density vector
$\gamma'=\begin{pmatrix}\gamma'_a&\gamma'_b&\gamma'_c\end{pmatrix}$
by:
\begin{align*}
  \gamma'_a&=\frac{\esp |V'|_a}{\esp |V'|}=\frac1{\esp |V'|}\,,& \gamma'_b&=\frac{\esp
    |V'|_b}{\esp |V'|}\,,& \gamma'_c&=\frac{\esp |V'|_c}{\esp |V'|}\,.
\end{align*}

The computation of $\gamma'$ is easy. A typical random heap $V'$ ends
up with~$a$, after having crossed, say, $k$~occurrences
of~$c$. Immediately before the~$j^{\text{th}}$ occurrence of~$c$, for
$j\leq k$, there has been an arbitrary number, say~$l_j$\,, of
occurrences of~$b$.  Hence: $V'=b^{l_1}\cdot c\cdot\ldots\cdot
b^{l_k}\cdot c\cdot a$, with $k\geq0$ and $l_1,\ldots,l_k\geq0$.
Referring to the definition of the probability~$\pr$, one has:
\begin{align*}
  \forall k, l_1,\ldots,l_k\geq0,\qquad
\pr(V'=b^{l_1}\cdot c\cdot\ldots\cdot b^{l_k}\cdot c\cdot a)=\lambda^k(1-\lambda) \lambda'^k(1-\lambda')^{l_1+\ldots+l_k}\,.
\end{align*}

Since $|V'|_b=l_1+\ldots+l_k$ and $|V'|_c=k$, the computation of the
various expectations is straightforward:
\begin{align*}
  \esp
  |V'|_b&=\sum_{k,l_1,\ldots,l_k\geq0}(l_1+\ldots+l_k)\lambda^k(1-\lambda)
  \lambda'^k(1-\lambda')^{l_1+\ldots+l_k}\\
  &=(1-\lambda)\sum_{k\geq0}(\lambda\lambda')^k
  k\sum_{l_1\geq0}l_1(1-\lambda')^{l_1}\Bigl(\sum_{l\geq0}(1-\lambda')^l\Bigr)^{k-1}\\
&=\frac{\lambda(1-\lambda')}{\lambda'(1-\lambda)}\,.\displaybreak[0]
\\
\esp |V'|_c&=\sum_{k,l_1,\ldots,l_k\geq0}k\lambda^k(1-\lambda)
  \lambda'^k(1-\lambda')^{l_1+\ldots+l_k}\\
&=(1-\lambda)\sum_{k\geq0}k(\lambda\lambda')^k\Bigl(\sum_{l\geq0}(1-\lambda')^l\Bigr)^k\\
&=\frac\lambda{1-\lambda}\,.\displaybreak[0]
\\
\esp |V'|&=1+\esp |V'|_b+\esp |V'|_c=\frac{\lambda+\lambda'-\lambda\lambda'}{\lambda'(1-\lambda)}
\end{align*}

We obtain the density vector $\gamma'$:
\begin{align*}
  \gamma'_a&=\frac{\lambda'(1-\lambda)}{\lambda+\lambda'-\lambda\lambda'}\,,&
\gamma'_b&=\frac{\lambda(1-\lambda')}{\lambda+\lambda'-\lambda\lambda'}\,,&
\gamma'_c&=\frac{\lambda\lambda'}{\lambda+\lambda'-\lambda\lambda'}\,.
\end{align*}
Comparing with~(\ref{eq:3}), we observe the announced equality
$\gamma=\gamma'$\,.

\section{Heap Monoids and Bernoulli Measures}
\label{sec:heap-mono-bern}

In this section, we collect the needed material on heap monoids and on
associated Bernoulli measures. Classical references on heap monoids
are \cite{cartier69,viennot86,diekert90,diekert95}. For Bernoulli
measures, we refer to the original paper~\cite{abbesmair14}.

\subsection{Independence Pairs. Heap Monoids}
\label{sec:heap-mono-length}

Let $\Sigma$ be a finite, non empty set of cardinality $>1$. Elements
of $\Sigma$ are called \emph{pieces}. We say that the pair
$(\Sigma,I)$ is an \emph{independence pair} if $I$~is a symmetric and
irreflexive relation on~$\Sigma$, called \emph{independence relation}.
We will furthermore always assume that the following irreducibility
assumption is in force: the associated \emph{dependence relation} $D$
on~$\Sigma$, defined by $D=(\Sigma\times\Sigma)\setminus I$, makes the
graph $(\Sigma,D)$ connected.

The free monoid generated by $\Sigma$ is denoted by~$\Sigma^*$\,, it
consists of all $\Sigma$-words. The congruence $\I$ is defined as the
smallest congruence on $\Sigma^*$ that contains all pairs of the form
$(ab,ba)$ for $(a,b)$ ranging over~$I$. The \emph{heap monoid}
$\M=\M(\Sigma,I)$ is defined as the quotient monoid
$\M=\Sigma^*/\I$. Hence $\M$ is the presented monoid:
\begin{equation*}
  \M=\langle\Sigma\;|\; a b= b a\,, \text{ for } (a,b)\in I\rangle\,.
\end{equation*}

Elements of a heap monoid are called \emph{heaps}. In the literature,
heaps are also called \emph{traces}; heap monoids are also called
\emph{free partially commutative monoids}.

We denote by the dot ``$\cdot$'' the concatenation of heaps, and by
$0$ the empty heap. 

A graphical interpretation of heaps is obtained by letting pieces
fall as dominoes on a ground, in such a way that
\begin{inparaenum}[(1)]
\item dominoes corresponding to different occurrences of the same
  piece follow the same lane; and
\item two dominoes corresponding to pieces $a$ and $b$ are blocking
  with respect to each other if and only if $(a,b)\notin I$\,.
\end{inparaenum}
This is illustrated in Figure~\ref{fig:paopjalaa} for the heap monoid
on three generators $\T=\langle a,b,c\;|\; ab=ba\rangle$. This will be
our running example throughout the paper.

\begin{figure}
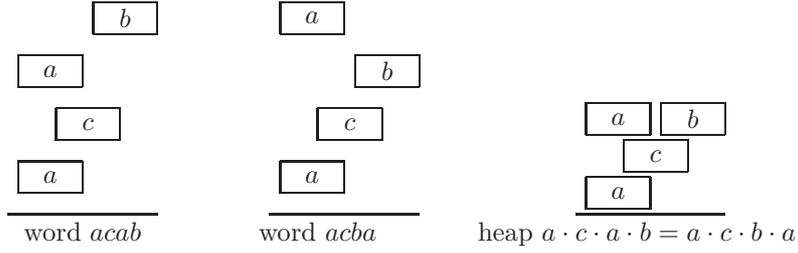

  \centering
  \begin{tabular}{ccc}
\xy
<.1em,0em>:
(0,6)="G",
"G"+(12,6)*{a},
"G";"G"+(24,0)**@{-};"G"+(24,12)**@{-};"G"+(0,12)**@{-};"G"**@{-},
(14,26)="G",
"G"+(12,6)*{c},
"G";"G"+(24,0)**@{-};"G"+(24,12)**@{-};"G"+(0,12)**@{-};"G"**@{-},
(0,46)="G",
"G"+(12,6)*{a},
"G";"G"+(24,0)**@{-};"G"+(24,12)**@{-};"G"+(0,12)**@{-};"G"**@{-},
(28,66)="G",
"G"+(12,6)*{b},
"G";"G"+(24,0)**@{-};"G"+(24,12)**@{-};"G"+(0,12)**@{-};"G"**@{-},
(-4,-2);(52,-2)**@{-}
\endxy
& \quad\qquad
\xy
<.1em,0em>:
(0,6)="G",
"G"+(12,6)*{a},
"G";"G"+(24,0)**@{-};"G"+(24,12)**@{-};"G"+(0,12)**@{-};"G"**@{-},
(14,26)="G",
"G"+(12,6)*{c},
"G";"G"+(24,0)**@{-};"G"+(24,12)**@{-};"G"+(0,12)**@{-};"G"**@{-},
(0,66)="G",
"G"+(12,6)*{a},
"G";"G"+(24,0)**@{-};"G"+(24,12)**@{-};"G"+(0,12)**@{-};"G"**@{-},
(28,46)="G",
"G"+(12,6)*{b},
"G";"G"+(24,0)**@{-};"G"+(24,12)**@{-};"G"+(0,12)**@{-};"G"**@{-},
(-4,-2);(52,-2)**@{-}
\endxy
&\qquad \xy
<.1em,0em>:
0="G",
"G"+(12,6)*{a},
"G";"G"+(24,0)**@{-};"G"+(24,12)**@{-};"G"+(0,12)**@{-};"G"**@{-},
(14,14)="G",
"G"+(12,6)*{c},
"G";"G"+(24,0)**@{-};"G"+(24,12)**@{-};"G"+(0,12)**@{-};"G"**@{-},
(0,28)="G",
"G"+(12,6)*{a},
"G";"G"+(24,0)**@{-};"G"+(24,12)**@{-};"G"+(0,12)**@{-};"G"**@{-},
(28,28)="G",
"G"+(12,6)*{b},
"G";"G"+(24,0)**@{-};"G"+(24,12)**@{-};"G"+(0,12)**@{-};"G"**@{-},
(-4,-2);(52,-2)**@{-}
\endxy\\
word $acab$&\quad word $acba$&\quad heap $a\cdot c\cdot a\cdot b=a\cdot c\cdot b\cdot a$
\end{tabular}
\caption{Two congruent words and the resulting heap}
  \label{fig:paopjalaa}
\end{figure}

\subsection{Length and Ordering}
\label{sec:mass-order-heaps}

The congruence $\I$ coincides with the reflexive and transitive
closure of the \emph{immediate equivalence}, which relates any two
$\Sigma$-words of the form $xaby$ and $xbay$, where $x,y\in\Sigma^*$
and $(a,b)\in I$. In particular, the length of congruent words is
invariant, which defines a mapping $|\,\cdot\,|:\M\to\bbN$\,. For any
heap $x\in\M$, the integer $|x|$ is called the
\emph{length} of~$x$. Obviously, the length is additive on~$\M$: $|x\cdot y|=|x|+|y|$
for all heaps $x,y\in\M$. 

The left divisibility relation ``$\leq$'' is defined on $\M$ by:
\begin{equation}
  \label{eq:5}
  \forall x,y\in\M\quad x\leq y\iff\exists z\in\M\quad y=x\cdot z\,.
\end{equation}
It defines a partial ordering relation on~$\M$. If $x\leq y$, we say
that $x$ is a \emph{sub-heap} of~$y$.  Note that holds: $x\leq
y\implies \m x\leq\m y$.

Visually, $x\leq y$ means that $x$ is a heap that can be seen at the
bottom of~$y$. But, contrary to words, a given heap might for instance
have several sub-heaps of length~$1$. Indeed, in the example monoid~$\T$
defined above, one has both $a\leq a\cdot b$ and $b\leq a\cdot b$ since
$a\cdot b=b\cdot a$ in~$\T$.

Heap monoids are known to be \emph{cancellative}, meaning:
\begin{equation*}
  \label{eq:6}
  \forall x,y,u,u'\in\M\quad
x\cdot u\cdot y=x\cdot u'\cdot y\implies u=u'\,.
\end{equation*}
This implies in particular that, if $x,y$ are heaps such that $x\leq
y$ holds, then the heap $z$ in~(\ref{eq:5}) is unique. We denote it
by: $z=y-x$\,.

\subsection{Cliques. Cartier-Foata Normal Form}
\label{sec:cliq-cart-foata}

Recall that a \emph{clique} of a graph is a sub-graph which is
complete as a graph---this includes the empty graph. The independence
pair $(\Sigma,I)$ may be seen as a graph. The cliques of $(\Sigma,I)$
are called the \emph{independence cliques}, or simply the
\emph{cliques} of the heap monoid~$\M$.

Each clique $\gamma$, with set of vertices $\{a_1,\ldots,a_n\}$\,,
identifies with the heap $a_1\cdot\ldots\cdot a_n\in\M$\,, which, by
commutativity, is independent of the sequence $(a_1,\ldots,a_n)$
enumerating the vertices of~$\gamma$. In the graphical representation
of heaps, cliques correspond to horizontal layers of pieces.  Note
that any piece is by itself a clique of length~$1$. We denote by $\C$
the set of cliques of the heap monoid~$\M$, and by
$\Cstar=\C\setminus\{0\}$ the set of non empty cliques.  For the
running example monoid~$\T$, there are 4 non empty cliques:
$\Cstar=\{a,b,c,a\cdot b\}$.

It is visually intuitive that heaps can be uniquely written as a
succession of horizontal layers, hence of cliques. More precisely,
define the relation $\to$ on $\C$ as follows:
\begin{equation*}
  \label{eq:7}
  \forall \gamma,\gamma'\in\C\quad \gamma\to \gamma'\iff\forall b\in \gamma'\quad\exists a\in
  \gamma\quad (a,b)\notin I\,. 
\end{equation*}

The relation $\gamma\to \gamma'$ means that $\gamma$ ``supports''~$\gamma'$, in the sense
that no piece of $\gamma'$ can fall when piled upon~$\gamma$.

A sequence $\gamma_1,\ldots,\gamma_n$ of cliques is said to be
\emph{Cartier-Foata admissible} if $\gamma_i\to \gamma_{i+1}$ holds
for all $i\in\{1,\ldots,n-1\}$\,. For every \emph{non empty heap}
$x\in\M$, there exists a unique integer $n\geq1$ and a unique
Cartier-Foata admissible sequence $(\gamma_1,\ldots,\gamma_n)$ of
\emph{non empty cliques} such that $x=\gamma_1\cdot\ldots\cdot
\gamma_n$\,.

This unique sequence of cliques is called the \emph{Cartier-Foata
  normal form} or \emph{decomposition} of~$x$ (\CF\ for short). The
integer $n$ is called the \emph{height} of~$x$, denoted by
$n=\height(x)$. By convention, we set $\height(0)=0$.

Heaps are thus in one-to-one correspondence with finite paths in the
graph $(\Cstar,\to)$ of \emph{non-empty} cliques. By convention, let
us extend any such finite path by infinitely many occurrences of the
\emph{empty} clique~$0$. Observe that $0$ is an absorbing vertex of
the graph of cliques $(\C,\to)$, since $0\to \gamma\iff\gamma=0$, and
$\gamma\to0$ holds for every $\gamma\in\C$. With this convention,
heaps are now in one-to-one correspondence with infinite paths in
$(\C,\to)$, that reach the $0$ node---and then stay in it.

\subsection{Infinite Heaps. Boundary}
\label{sec:infin-heaps.-bound}

We define an \emph{infinite heap} as any infinite admissible sequence
of cliques in the graph $(\C,\to)$, that does not reach the empty
clique.  The set of infinite heaps is called the \emph{boundary at
  infinity of~$\M$}, or simply the \emph{boundary} of~$\M$, and we
denote it by~$\BM$~\cite{abbesmair14}. By contrast, elements of $\M$
might be called \emph{finite heaps}. Extending the previous
terminology, we still refer to the cliques $\gamma_n$ such that
$\xi=(\gamma_n)_{n\geq1}$ as to the \CF\ decomposition of an infinite
heap~$\xi$\,.

It is customary to introduce the following notation:
\begin{equation*}
  \Mbar=\M\cup\BM\,.
\end{equation*}

Elements of $\Mbar$ are thus in one-to-one correspondence with
infinite paths in \mbox{$(\C,\to)$}; those that reach~$0$, correspond
to heaps, and those that do not reach~$0$, correspond to infinite
heaps. For $\xi\in\BM$\,, we put $|\xi|=\infty$\,. 

We wish to extend to $\Mbar$ the order $\leq$ previously defined
on~$\M$. For this, we use the representation of heaps, either finite
or infinite, as infinite paths in the graph $(\C,\to)$, and we put,
for $\xi=(\gamma_1,\gamma_2,\ldots)$ and
$\xi'=(\gamma'_1,\gamma'_2,\ldots)$:
\begin{gather}
\label{eq:22}
  \xi\leq\xi'\iff\forall n\geq1\quad \gamma_1\cdot\ldots\cdot\gamma_n\leq\gamma'_1\cdot\ldots\cdot\gamma'_n\,.
\end{gather}

\begin{proposition}[\mbox{\cite{abbesmair14}}]
\label{prop:1}
The relation defined in~\eqref{eq:22} makes $(\Mbar,\leq)$ a partial
order, which extends $(\M,\leq)$\,, and with the following properties:
\begin{enumerate}
\item\label{item:10} $(\Mbar,\leq)$ is complete with respect to:
  \begin{enumerate}
  \item\label{item:13} Least upper bounds (\lub) of non-decreasing
    sequences: for every sequence $(x_n)_{n\geq1}$ such that
    $x_n\in\Mbar$ and $x_n\leq x_{n+1}$ for all integers $n\geq1$, the
    \lub\ $\bigvee_{n\geq1} x_n$ exists in~$\Mbar$.
  \item\label{item:14} Greatest lower bounds of arbitrary subsets.
  \end{enumerate}
\item\label{item:11} For every heap $\xi\in\Mbar$, either finite or
  infinite, the following subset:
\begin{equation}
\label{eq:25}
  L(\xi)=\{x\in\Mbar\tq x\leq\xi\}\,,
\end{equation}
is a complete lattice with\/ $0$ and\/ $\xi$ as minimal and maximal
elements.
\item\label{item:12} {\normalfont(Finiteness property of elements of
    $\M$ in the sense of \cite{gierz03})}\quad For every finite heap
  $x\in\M$, and for every non-decreasing sequence $(x_n)_{n\geq0}$ of
  heaps, holds:
  \begin{gather*}
\bigvee_{n\geq0} x_n\geq x\implies\exists n\geq0\quad x_n\geq x\,.
  \end{gather*}
\item\label{item:17} The elements of $\M$ form a basis of $\Mbar$ in
  the sense of\/~{\normalfont\cite{gierz03}}: for all
  $\xi,\xi'\in\Mbar$ holds:
  \begin{gather*}
    \xi\geq\xi'\iff(\forall x\in\M\quad \xi'\geq x\implies \xi\geq
    x)\,.
  \end{gather*}

\end{enumerate}
\end{proposition}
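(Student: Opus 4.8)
The plan is to establish each of the four items essentially independently, relying on the combinatorial description of $\Mbar$ as the set of infinite paths in $(\C,\to)$ reaching or not the absorbing node $0$, together with the characterization~\eqref{eq:22} of the order in terms of the finite prefixes $\gamma_1\cdot\ldots\cdot\gamma_n$. The key bookkeeping fact I would isolate first is this: if $\xi=(\gamma_n)_{n\geq1}$ and $\xi'=(\gamma'_n)_{n\geq1}$ are two elements of $\Mbar$, then $\xi\leq\xi'$ iff the sequence $\bigl(\gamma'_1\cdot\ldots\cdot\gamma'_n - \gamma_1\cdot\ldots\cdot\gamma_n\bigr)_{n\geq1}$ of ``differences'' (well-defined by cancellativity once the prefix inequality holds at each level) is itself non-decreasing in $(\M,\leq)$; this is the analogue, in the heap setting, of the fact that an increasing sequence of prefixes of words is nested. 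That antisymmetry of $\leq$ on $\Mbar$ then follows because equality of all finite prefixes forces the two clique sequences to coincide (the \CF\ form being unique), and transitivity is immediate from transitivity of $\leq$ on each finite level; reflexivity and the fact that it extends $(\M,\leq)$ are clear once one extends a finite heap by infinitely many copies of $0$.

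\textbf{Item~\ref{item:10}\ref{item:13}: \lub s of non-decreasing sequences.} Given $x_1\leq x_2\leq\cdots$ in $\Mbar$, I would build the candidate supremum prefix by prefix. At height $n$, the heaps $x_k$ restricted to ``what is below level $n$'' stabilize: more precisely, the sub-heap of $x_k$ consisting of its first-$n$-cliques product is a non-decreasing sequence in $\M$ for fixed $n$, but to get a single limiting heap I would instead argue directly on pieces, or better, use that for each fixed finite heap $y$, the set of $k$ with $y\leq x_k$ is upward closed; letting $z=\bigvee\{y\in\M: y\leq x_k \text{ for some }k\}$ and checking, via cancellativity and the graph structure, that this directed set of finite heaps has a well-defined limit in $\Mbar$ (a finite heap if the heights stay bounded, an infinite heap otherwise). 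One then verifies $z$ is an upper bound and that any upper bound dominates every $x_k$ hence every finite $y\leq$ some $x_k$, hence dominates $z$ by item~\ref{item:17} (which I would prove first, see below, to avoid circularity — or prove this item by a bare-hands prefix construction instead).

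\textbf{Items~\ref{item:12}, \ref{item:17}, then \ref{item:10}\ref{item:14} and \ref{item:11}.} The finiteness property~\ref{item:12} is the crux and I expect it to be the main obstacle: if $x\in\M$ and $\bigvee_n x_n\geq x$, I must produce a single $n$ with $x_n\geq x$. The point is that $x$ has finitely many pieces and finite height $h=\height(x)$; since $x\leq\bigvee_n x_n$, each piece of $x$ together with its position in the \CF\ layering appears already in the prefix of $\bigvee x_n$ up to level $h$, and that prefix is a finite heap that, by the \lub\ construction, equals some $\gamma_1\cdot\ldots\cdot\gamma_h$ already realized (as a sub-heap) by $x_n$ for $n$ large — here one uses that the non-decreasing sequence of level-$\leq h$ sub-heaps of the $x_n$ is an increasing sequence of finite heaps of bounded height, hence bounded length, hence eventually constant. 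From~\ref{item:12} the basis property~\ref{item:17} follows formally: the nontrivial implication is $(\forall x\in\M,\ \xi'\geq x\Rightarrow\xi\geq x)\implies\xi\geq\xi'$, obtained by writing $\xi'=\bigvee$ of its finite sub-heaps and applying completeness. Finally, glb s of arbitrary subsets (\ref{item:10}\ref{item:14}) come from taking, for a family $(\xi_i)$, the ``common prefix'' clique by clique — at each level the set of finite heaps below all $\xi_i$ is closed under \lub\ (it is a sub-lattice of a finite-height slice) — and $L(\xi)$ being a complete lattice then follows since it is closed under the ambient \lub s and glb s with top $\xi$ and bottom $0$. I would present~\ref{item:12} first, deduce~\ref{item:17}, use both to get clean proofs of~\ref{item:13} and~\ref{item:14}, and close with~\ref{item:11}; alternatively, since the statement is quoted from~\cite{abbesmair14}, one may simply cite it, but the above is the self-contained route.
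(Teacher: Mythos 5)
The paper does not prove this proposition at all: it is quoted verbatim from \cite{abbesmair14} and used as imported background throughout. So the route the paper takes is the one you mention only in your last sentence --- citation --- and your self-contained sketch is, by construction, a different and more ambitious path. Its overall architecture (order axioms from the prefix characterization, then the finiteness property, then the basis property, then completeness and the lattice structure of $L(\xi)$) is the right shape and consistent with how such results are proved for trace monoids.

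As a proof, however, the sketch has one load-bearing gap: every construction you describe rests on a structural comparison of the \CF\ decompositions of two comparable heaps, and you never supply it. To know that the first-$n$-cliques sub-heaps of the $x_k$ are non-decreasing in $k$, that the level-$\leq h$ prefixes of a non-decreasing sequence stabilize, or that a clique-by-clique ``common prefix'' of a family is well defined, you need the fact that $x\leq y$ forces the $i$-th \CF\ clique of $y$ to contain the $i$-th \CF\ clique of $x$ (with the extra pieces independent of all later cliques of $x$); this is exactly \cite[Lemma~8.1]{abbesmair14}, which the present paper itself invokes in the proof of its Proposition~4.2, and appealing to ``cancellativity and the graph structure'' does not substitute for it. Two smaller issues: the opening ``key bookkeeping fact'' (that $\xi\leq\xi'$ is \emph{equivalent} to the difference sequence being non-decreasing) is asserted, never justified, and never used afterwards --- the forward implication in particular is not obvious and should be dropped or proved; and the glb/complete-lattice step silently assumes that the \lub\ of two compatible finite heaps is absolute, i.e.\ independent of which $L(\xi_i)$ it is computed in, which is part of what items (1b) and (2) assert, so using it there is circular unless established separately (again via the \CF\ comparison lemma).
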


\subsection{Elementary Cylinders. Bernoulli Measures}
\label{sec:elem-clyind-bern}

For $x\in\M$ a heap, the \emph{elementary cylinder of base~$x$} is the
following non empty subset of~$\BM$:
\begin{equation*}
  \label{eq:12}
  \up x=\{\xi\in\BM\tq x\leq \xi\}\,.
\end{equation*}

We equip the boundary $\BM$ with the \slgb:
\begin{gather*}
  \FFF=\sigma\langle\up x\,,\; x\in\M\rangle\,,
\end{gather*}
generated by the countable collection of
elementary cylinders. From now on, when referring to the
\emph{boundary}, we shall always mean the measurable space
$(\BM,\FFF)$.

We say that a probability measure $\pr$ on the boundary is a
\emph{Bernoulli measure} whenever it satisfies:
\begin{equation}
  \label{eq:13}
  \forall x,y\in\M\quad\pr\bigl(\up(x\cdot y)\bigr)=\pr(\up
  x)\cdot\pr(\up y)\,. 
\end{equation}

We shall furthermore impose the following condition to avoid
degenerated cases:
\begin{equation}
  \label{eq:14}
  \forall x\in\M\quad\pr(\up x)>0.
\end{equation}

If $\pr$ is a Bernoulli measure on the boundary, the positive function
\mbox{$f:\M\to\bbR$} defined by:
\begin{equation*}
  \forall x\in\M\quad f(x)=\pr(\up x)\,,
\end{equation*}
is called the \emph{valuation associated to\/~$\pr$.} By definition of
Bernoulli measures, $f$~is multiplicative: $f(x\cdot y)=f(x)\cdot
f(y)$. In particular, the values of $f$ on $\M$ are entirely
determined by the finite collection $(p_a)_{a\in\Sigma}$ of
\emph{characteristic numbers of\/~$\pr$} defined by the value of $f$
on single pieces:
\begin{equation}
\label{eq:15}
  \forall a\in\Sigma\quad p_a=f(a)\,.
\end{equation}

The condition~(\ref{eq:14}) is equivalent to impose $p_a>0$ for all
$a\in\Sigma$. 

For any heap $x\in\M$, $\pr(\up x)$~corresponds to the probability of
seeing $x$ at bottom of a random infinite heap with law~$\pr$\,. By
definition of Bernoulli measures, this probability is equal to the
product $p_{a_1}\times\dots\times p_{a_n}$\,, where the word $a_1\dots
a_n$ is any representative word of the heap~$x$.

\subsection{Interpretation of the Introductory Probabilistic Protocol}
\label{sec:interpr-intr-prob}

The sole definition of Bernoulli measures already allows us to
interpret the probabilistic protocol introduced
in~\S~\ref{sec:cut-invariance-an} by means of a Bernoulli measure
$\pr$ on the boundary of a heap monoid. Obviously, the heap monoid to
consider coincides with our running example $\T=\langle
a,b,c\;|\;ab=ba\rangle$ on three generators. Let us check that the
measure $\pr$ defined by the law of infinite heaps generated by the
described protocol is indeed Bernoulli.

Any  heap $x\in\T$ can be uniquely described under  the form: 
\begin{equation*}
x=(a^{r_1}\cdot b^{s_1})\cdot
c\cdot\ldots\cdot (a^{r_k}\cdot b^{s_k})\cdot c\cdot a^{r_{k+1}}\cdot
b^{s_{k+1}}\,,
\end{equation*}
for some integers $k,r_1,s_1,\ldots,r_{k+1},s_{k+1}\geq0$\,. For such
a heap~$x$, referring to the description of the probabilistic
protocol, the associated cylinder $\up x$ is described by:
%\newlength{\tempo}\setlength{\tempo
\begin{equation*}
  \up x=
  \begin{cases}
\text{Round $1$:}&N_a=r_1,\ N_b=s_1\\
\setbox0=\hbox{Round $1$}\makebox[\wd0][c]{$\vdots$}\\
\text{Round $k$:}&N_a=r_k,\ N_b=s_k\\
\text{Round $k+1$:}&N_a\geq r_{k+1},\ N_b\geq s_{k+1}    
  \end{cases}
\end{equation*}
and is given probability:
\begin{align*}
  \pr(\up x)&=(\lambda\lambda')^k(1-\lambda)^{r_1+\ldots+r_k}
  (1-\lambda')^{s_1+\ldots+s_k}
  (1-\lambda)^{r_{k+1}}(1-\lambda')^{s_{k+1}}\,.
\end{align*}

If $f:\M\to\bbR$ is the multiplicative function defined by:
\begin{align}
\label{eq:27}
f(a)&=1-\lambda\,,& f(b)&=1-\lambda'\,,&f(c)&=\lambda\lambda'\,,
\end{align}
it is thus apparent that $\pr(\up x)=f(x)$ holds for all
$x\in\M$. Since $\pr(\up x)$ is multiplicative, the measure $\pr$ is
Bernoulli.  

In passing, we notice that, whatever the choices of
$\lambda,\lambda'\in(0,1)$, the equation:
\begin{gather*}
1-f(a)-f(b)-f(c)+f(a)f(b)=0
\end{gather*}
is satisfied, as one shall expect from~(\ref{eq:20})--$(a)$ below.

\subsection{Compatible Heaps}
\label{sec:compatible-heaps}

In the sequel, we shall often use the following facts. We say that two
heaps $x,y\in\M$ are \emph{compatible} if there exists $z\in\M$ such
that $x\leq z$ and $y\leq z$. It follows in particular from
Proposition~\ref{prop:1} that the following propositions are
equivalent:
\begin{enumerate}[(i)]
\item $x,y\in\M$ are compatible;
\item $\up x\;\cap\up y\neq\emptyset$;
\item the \lub\ $x\vee y$ exists in~$\M$.
\end{enumerate}
In this case, we also have:
\begin{align*}
\up x\;\cap\up y&=\up(x\vee y)\,,
\end{align*}
and if $\pr$ is a Bernoulli probability measure on~$\BM$, still for
$x$ and $y$ compatible:
\begin{align}
\label{eq:9}
  \pr\bigl(\up x\,|\up y\bigr)&=\pr\bigl(\up ((x\vee y)- y)\bigr)=\pr\bigl(\up(x-(x\wedge y))\bigr)\,.
\end{align}

\subsection{M\"obius Transform. Markov Chain of Cliques}
\label{sec:bernoulli-measures}

Call \emph{valuation} any positive and multiplicative function
$f:\M\to\bbR$; the valuations induced by Bernoulli measures are
particular examples of valuations. Any valuation is characterized by
its values on single pieces, as in~(\ref{eq:15}). 

If $f:\M\to\bbR$ is any valuation, let $h:\C\to\bbR$ be defined by:
\begin{equation}
  \label{eq:16}
\forall \gamma\in\C\quad   h(\gamma)=\sum_{\gamma'\in\C\tq \gamma'\geq \gamma}(-1)^{\m {\gamma'}-\m
  \gamma}f(\gamma')\,. 
\end{equation}

The function $h:\C\to\bbR$ is the \emph{Möbius transform} of~$f$, a
particular instance of the general notion of M\"obius transform in the
sense of Rota~\cite{rota64,stanley86}. For a given valuation
$f:\M\to\bbR$, there exists a Bernoulli measure on the boundary that
induces $f$ if and only the Möbius transform $h$ of $f$ satisfies the
following two conditions:
\begin{align}
\label{eq:18}
  (a)\ h(0)&=0\,;
&(b)\ \forall \gamma\in\Cnv\quad h(\gamma)>0\,.
\end{align}

Note that Condition $(a)$ is a polynomial condition in the
characteristic numbers, and Condition~$(b)$ corresponds to a finite
series of polynomial inequalities. For instance, for the heap monoid
$\T$ on three generators $\T=\langle a,b,c\tq ab=ba\rangle$, we
obtain:
\begin{align}
\label{eq:20}
(a)\   1-p_a-p_b-p_c+p_ap_b&=0\,,&(b)\
\begin{cases}
\begin{aligned}
 h(a)>0&\iff p_a(1-p_b)>0\\  
h(b)>0&\iff p_b(1-p_a)>0\\
h(c)>0&\iff p_c>0\\
h(ab)>0&\iff p_ap_b>0
\end{aligned}
\end{cases}
\end{align}

Returning to the study of a general heap monoid, and when considering
the case where all coefficients $p_a$ are equal, say to~$p$, then both
conditions in~(\ref{eq:18}) reduce to the following: $p$~is the (known
to be unique~\cite{goldwurm00,krob03,csikvari13}) root of smallest
modulus of the \emph{Möbius polynomial of the heap monoid\/~$\M$},
defined by:
\begin{equation*}
  \label{eq:19}
  \mu_\M(X)=\sum_{c\in\C}(-1)^{\m c}X^{\m c}\,.
\end{equation*}
The associated Bernoulli measure is then called the \emph{uniform
  measure on the boundary}.

For the running example $\T$, one has $\mu_\T(X)=1-3X+X^2$\,, and the
uniform measure is given by $\pr(\up x)=p^{\m x}$ with
$p=(3-\sqrt5)/2$\,. 

\medskip In the remaining of this subsection, we characterize the
process of cliques that compose a random infinite heap under a
Bernoulli measure.

For each integer $n\geq1$, the mapping which associates to an infinite
heap $\xi$ the $n^{\text{th}}$~clique $\gamma_n$ such that
$\xi=(\gamma_1,\gamma_2,\ldots)$ is measurable, and defines thus a
random variable $C_n:\BM\to\Cnv$\,. Furthermore, the sequence
$(C_n)_{n\geq1}$ is a \emph{time homogeneous and ergodic Markov
  chain}, of which:
\begin{enumerate}
\item The initial distribution is given by the
  restriction~$h\rest\Cnv$\,, where $h$ is the M\"obius transform
  defined in~(\ref{eq:16}).
\item For each non empty clique $\gamma\in\Cnv$, put:
  \begin{gather}
\label{eq:30}
g(\gamma)=\sum_{\gamma'\in\Cnv\tq\gamma\to \gamma'}h(\gamma')\,.
  \end{gather}
  Then the transition matrix $P=(P_{\gamma,\gamma'})_{(\gamma,\gamma')\in\Cnv}$ of the
  chain is given by:
  \begin{equation}
    \label{eq:21}
    P_{\gamma,\gamma'}=
    \begin{cases}
      0,&\text{if $(\gamma\to \gamma')$ does \emph{not} hold,}\\
h(\gamma')/g(\gamma),&\text{if $(\gamma\to \gamma')$ holds.}
    \end{cases}
  \end{equation}
\end{enumerate}

In general, the initial measure $h\rest\Cnv$ does \emph{not} coincide
with the stationary measure of the chain of cliques.

\section{Asynchronous Stopping Times\\ and the Strong Bernoulli
  Property}
\label{sec:asynchr-stopp-times}

In this section we introduce asynchronous stopping times and their
associated shift operators. They will be our basic tools to formulate
and prove the Law of large numbers in the subsequent sections.

\subsection{Definition and Examples}
\label{sec:defin-exampl-asynchr}

Intuitively, an asynchronous stopping time is a way to select
sub-heaps from infinite heaps, such that for each infinite heap one
can decide at each ``time instant'' whether the sub-heap in question
has already been reached or not. The formal definition follows.

\begin{definition}
\label{def:3}
  An \emph{asynchronous stopping time}, or \ast\ for short, is a
  mapping $V:\BM\to\Mbar$, which we sometimes denote by $\xi\mapsto
  \xi_V$\,, such that:
  \begin{enumerate}
  \item\label{item:5} $\xi_V\leq\xi$ for all $\xi\in\BM$;
%  \item\label{item:6} if\/ $\m{\xi_V}=\infty$, then $\xi_V=\xi$;
  \item\label{item:7} $\forall\xi,\xi'\in\BM\quad (\m{\xi_V}<\infty\wedge\xi_V\leq\xi')
    \implies\xi'_V=\xi_V$\,.
  \end{enumerate}
We say that $V$ is \emph{\pas\ finite} whenever $\xi_V\in\M$ for \pas\
every $\xi\in\BM$. 
\end{definition}

In the above definition, we think of $\xi_V$ as ``$\xi$~cut
at~$V$''. $V=0$ is a first, trivial example of \ast. Actually, the 
property~\ref{item:7} of the above definition implies that if
$\xi_V=0$ for some $\xi\in\BM$, then $V=0$ on~$\BM$. Another simple
example is the following. Let $x\in\M$ be a fixed heap. Define
$V_x:\BM\to\Mbar$ by:
\begin{align*}
  V_x(\xi)=
  \begin{cases}
x\,,&\text{if $x\leq \xi$\,,}\\
\xi\,,&\text{otherwise.}    
  \end{cases}
\end{align*}
Then it is easy to see that $V_x$ is an \ast. We recover the previous
example by setting $x=0$.

Consider the sequence of random cliques $(C_k)_{k\geq1}$ associated
with infinite heaps, and define for each integer $k\geq1$ the random
heap $Y_k=C_1\cdot\ldots\cdot C_k$\,. Then, by construction,
$Y_k\leq\xi$ holds; however, the mapping $Y_k$ is \emph{not} an \ast,
except if $\M$ is the free monoid, which corresponds to the empty
independence relation $I$ on~$\Sigma$. 

We will frequently use the following remark, which is a direct
consequence of the definition: let $V:\BM\to\Mbar$ be an \ast, and let
$\V$ be the set of finite values assumed by~$V$. Then holds:
\begin{gather*}
  \forall x\in\V\qquad\{V=x\}=\up x\,.
\end{gather*}

The following proposition provides less trivial examples of \ast\ that
we will use throughout the rest of the paper. Let us first introduce a
new notation. If $a\in\Sigma$ is a piece, and $x\in\M$ is a heap, the
number of occurrences of $a$ in a representative word of $x$ does not
depend on the representative word, and is thus attached to the
heap~$x$. We denote it by~$|x|_a$\,.

\begin{proposition}
  \label{prop:2}
The two mappings $\BM\to\Mbar,\ \xi\mapsto\xi_V$ described below
define asynchronous stopping times:
\begin{enumerate}
\item\label{item:8} For $\xi=(\gamma_1,\gamma_2,\ldots)$, let $R_\xi=\{k\geq1\tq
  \text{$\gamma_k$ is maximal in $\C$}\}$, and put:
  \begin{align}
  \label{eq:23}
    \xi_V=\begin{cases}
  \gamma_1\cdot\ldots\cdot \gamma_n\text{ with $n=\min R_\xi$}\,,&\text{if
    $R_\xi\neq\emptyset$\,,}\\
\xi\,,&\text{otherwise.}
  \end{cases}
  \end{align}
\item\label{item:9} Let $a\in\Sigma$ be some fixed piece. For
  $\xi\in\BM$, put: 
  \begin{align}
    \label{eq:24}
\xi_V&=\bigwedge H_a(\xi)\,,&   H_a(\xi)&=\{x\in L(\xi)\cap\M\tq\ |x|_a>0\} \,,
  \end{align}
  where $L(\xi)=\{x\in\Mbar\tq x\leq\xi\}$ is the complete lattice
  defined in\/~\eqref{eq:25}, and where the greatest lower bound defining\/
  $\xi_V$ is taken in~$L(\xi)$. It is called the \emph{first hitting time
    of~$a$}.
\end{enumerate}
\end{proposition}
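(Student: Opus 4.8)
The plan is to verify the two defining conditions of Definition~\ref{def:3} for each construction, the first being essentially free and the second requiring real work only once $\xi_V$ turns out to be finite. Indeed, condition~\ref{item:5} is immediate: for the mapping of item~\ref{item:8}, $\xi_V$ is either $\xi$ itself or a prefix $\gamma_1\cdots\gamma_n$ of the \CF\ decomposition of $\xi$, hence $\xi_V\leq\xi$ by~\eqref{eq:22}; for the mapping of item~\ref{item:9}, $\xi_V$ belongs to $L(\xi)$ by its very definition, so $\xi_V\leq\xi$. And condition~\ref{item:7} of Definition~\ref{def:3} is vacuous unless $\xi_V\in\M$, so from now on I focus on that case.

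For item~\ref{item:8}, the key observation I would establish first is purely graph-theoretic: a maximal clique $\gamma$ satisfies $\gamma\to\gamma'$ for \emph{every} clique $\gamma'$ (given $b\in\gamma'$, take $a=b$ if $b\in\gamma$, and use maximality of $\gamma$ if $b\notin\gamma$). From this I deduce: if $y\in\M$ has \CF\ decomposition $(\gamma_1,\dots,\gamma_n)$ with $\gamma_n$ maximal, then for any $z\in\Mbar$ the concatenation of the \CF\ decompositions of $y$ and of $z$ is an admissible sequence of non-empty cliques (the only new adjacency $\gamma_n\to\delta_1$ is granted by the observation), hence, by uniqueness of \CF\ decompositions, it \emph{is} the \CF\ decomposition of $y\cdot z$. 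With this in hand, condition~\ref{item:7} falls out: if $\xi_V=\gamma_1\cdots\gamma_n$ is finite then $R_\xi\neq\emptyset$, $n=\min R_\xi$, with $\gamma_n$ maximal and $\gamma_1,\dots,\gamma_{n-1}$ not; for $\xi'\geq\xi_V$, writing $\xi'=\xi_V\cdot\eta$ with $\eta\in\BM$ shows that the \CF\ decomposition of $\xi'$ starts with $\gamma_1,\dots,\gamma_n$, so $\min R_{\xi'}=n$ and $\xi'_V=\gamma_1\cdots\gamma_n=\xi_V$.

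For item~\ref{item:9}, I would first note that $\xi_V$ is well defined since $L(\xi)$ is a complete lattice (Proposition~\ref{prop:1}), with $\xi_V=\bigwedge\emptyset=\xi$ when $H_a(\xi)=\emptyset$. Then, working with the classical picture of $x\leq y$ as ``$x$ realized as a down-closed sub-heap (prefix) of $y$'' and using that the occurrences of the fixed piece $a$ inside any heap are pairwise dependent, hence linearly ordered, I would show that $\bigwedge H_a(\xi)$ is exactly the down-closure of the first occurrence of $a$ in $\xi$ -- a finite heap carrying a single $a$, lying below and belonging to $H_a(\xi)$. For condition~\ref{item:7}: if $\xi_V$ is finite and $\xi_V\leq\xi'$, then $\xi_V\in H_a(\xi')$, whence $\xi'_V\leq\xi_V$; conversely, the unique $a$ of $\xi_V$ must be the \emph{first} occurrence of $a$ in $\xi'$ (otherwise down-closedness of $\xi_V$ inside $\xi'$ would create a second $a$ in $\xi_V$), and since all of $\xi_V$ sits below that occurrence one gets $\xi_V\leq\xi'_V$; hence $\xi'_V=\xi_V$.

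I expect the main obstacles to be the two bookkeeping points. In item~\ref{item:8}, one must legitimize both the factorization $\xi'=\xi_V\cdot\eta$ and the ``stability'' of \CF\ normal forms past a maximal clique; both of these I would reduce to uniqueness of \CF\ decompositions together with the completeness statements of Proposition~\ref{prop:1}. In item~\ref{item:9}, the delicate part is handling the embedding of $\xi_V$ as a down-closed sub-heap of $\xi'$ simultaneously with the chain of $a$-occurrences, where the crux is ruling out any spurious extra occurrence of $a$.
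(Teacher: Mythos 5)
Your proposal is correct. For point~2 (the first hitting time) it follows essentially the paper's own argument: both proofs rest on the observation that $\bigwedge H_a(\xi)$ is in fact attained as a \emph{minimum} of $H_a(\xi)$, namely the down-closure of the first $a$-occurrence (the paper justifies this by pointing to the resource/poset interpretation of heaps, exactly the picture you use), after which $\xi'_V\leq\xi_V$ comes from $\xi_V\in H_a(\xi')$; for the reverse inequality the paper is marginally more economical than you are, noting simply that $\xi'_V\leq\xi_V\leq\xi$ and $|\xi'_V|_a>0$ give $\xi'_V\in H_a(\xi)$, hence $\xi_V\leq\xi'_V$, with no need to rule out spurious $a$-occurrences. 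For point~1 your route is genuinely different. The paper compares the first $n$ layers of $\xi$ and $\xi'$ via a cited lemma of \cite{abbesmair14}: from $\gamma_1\cdots\gamma_n\leq\gamma'_1\cdots\gamma'_n$ it gets $\gamma'_i=\gamma_i\cdot\delta_i$ with $\delta_i$ independent of $\gamma_i,\ldots,\gamma_n$, and maximality of $\gamma_n$ kills every $\delta_i$. You instead prove directly that a maximal clique supports every clique, so that \CF\ normal forms concatenate across a maximal layer, and read off the first $n$ cliques of $\xi'$ from the factorization $\xi'=\xi_V\cdot\eta$. Your version is more self-contained (no appeal to the external lemma), at the cost of the two bookkeeping points you flag: the factorization itself (the bijection $\Phi_{\xi_V}$ of \S~\ref{sec:action-monoid-its}) and the stability of normal forms under concatenation, which for infinite $\eta$ still requires a word about least upper bounds. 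A way to sidestep the infinite case entirely is to apply your concatenation observation only to the finite heap $\gamma'_1\cdots\gamma'_n=u\cdot w$ given by $u\leq\gamma'_1\cdots\gamma'_n$ and compare heights to conclude $w=0$.
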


For the first hitting time of~$a$, since the greatest lower
bound in~(\ref{eq:24}) is taken in the complete lattice~$L(\xi)$, if
$H_a(\xi)=\emptyset$ then $\xi_V=\max L(\xi)=\xi$\,. 

\begin{proof}
  \ref{item:8}.\quad
The condition $\xi_V\leq\xi$ is obvious on~(\ref{eq:23}). Hence let
$\xi,\xi'\in\BM$ such that $\xi_V\in\M$ and $\xi_V\leq\xi'$\,. We need
to show that $\xi'_V=\xi_V$\,. For this, let $\xi=(\gamma_1,\gamma_2,\ldots)$
and $\xi'=(\gamma'_1,\gamma'_2,\ldots)$, and let $n=\min R_\xi$ and
$u=\gamma_1\cdot\ldots\cdot \gamma_n$\,. By hypothesis, we have $u\leq\xi'$ and
thus $u\leq \gamma'_1\cdot\ldots\cdot \gamma'_n$\,, by~(\ref{eq:22}).

It follows from \cite[Lemma~8.1]{abbesmair14} that the sequences
$(\gamma_i)_{1\leq i\leq n}$ and $(\gamma'_i)_{1\leq i\leq n}$ are
related as follows: for each integer $i\in\{1,\ldots,n\}$, there
exists a clique $\delta_i$ such that $\delta_i\wedge
\gamma_i=0,\ldots,\delta_i\wedge\gamma_n=0$\,, and
$\gamma'_i=\gamma_i\cdot\delta_i$\,. But $\gamma_n$ is maximal,
therefore $\delta_i=0$ for all $i\in\{1,\ldots,n\}$, and thus
$\gamma'_i=\gamma_i$\,. From $\gamma'_n=\gamma_n$ follows at once that
$\min R_{\xi'}\leq n$. And since $\gamma'_i=\gamma_i$ for all $i<n$,
no clique $\gamma'_i$ is maximal in~$\C$, otherwise it would
contradict the definition of~$\xi_V$\,. Hence finally $\min
R_{\xi'}=n$\,, from which follows $\xi'_V=\gamma'_1\cdot\ldots\cdot
\gamma'_n=\gamma_1\cdot\ldots\cdot \gamma_n=\xi_V$\,.

\medskip \ref{item:9}.\quad Again, it is obvious on~(\ref{eq:24}) that
$\xi_V\leq\xi$. Let $\xi,\xi'\in\BM$ be such that $\xi_V\in\M$ and
$\xi_V\leq\xi'$\,.  Then $H_a(\xi)\neq\emptyset$, and we observe that
$H_a(\xi)$ actually has a minimum, $\xi_V=\min H_a(\xi)$. This is best
seen with the resource interpretation of heap monoids introduced
in~\cite{cori85}. 

%  Indeed, if $n=\min\{\m x\tq x\in
% H_a(\xi)\}$\,, and is $x\in H_a(\xi)$ is such that $\m x=n$, then
% every $y\in H_a(\xi)$ satisfies $x\leq y$. Hence $\xi_V=\min
% H_a(\xi)=x$.

It follows that $\xi_V\in H_a(\xi')$ and thus
$H_a(\xi')\neq\emptyset$. Henceforth, as above, we deduce $\xi'_V=\min
H_a(\xi')$, and $\xi'_V\leq\xi_V$\,. It follows that $\xi'_V\in
H_a(\xi)$ and thus $\xi_V\leq\xi'_V$ and finally $\xi_V=\xi'_V$\,.
\end{proof}

\subsection{Action of the Monoid on its Boundary. Shift Operators}
\label{sec:action-monoid-its}

In order to define the shift operator associated to an \ast, we first
describe the natural left action of a heap monoid on its boundary.

For $x\in\M$ and $\xi\in\BM$ an infinite heap, the visually intuitive
operation of piling up $\xi$ upon $x$ should yield an infinite
heap. However, some pieces in the first layers of $\xi$ might fall off
and fill up empty slots in~$x$. Hence the \CF\ decomposition of
$x\cdot\xi$ cannot be defined as the mere concatenation of the \CF\
decompositions of $x$ and~$\xi$.

The proper definition of the concatenation $x\cdot\xi$ is as
follows. Let $\xi=(\gamma_1,\gamma_2,\ldots)$.  The sequence of heaps
$(x\cdot \gamma_1\cdot\ldots\cdot \gamma_n)_{n\geq1}$ is obviously non
decreasing. According to point~\ref{item:13} of
Proposition~\ref{prop:1}, we may thus consider:
\begin{equation*}
  x\cdot\xi=\bigvee_{n\geq1}(x\cdot \gamma_1\cdot\ldots\cdot
  \gamma_n)\,,\quad\text{which exists in $\BM$\,,}
\end{equation*}
and then we have:
\begin{gather*}
  \forall x,y\in\M\quad\forall\xi\in\BM\quad
  x\cdot(y\cdot\xi)=(x\cdot y)\cdot\xi\,.
\end{gather*}

It is then routine to check that, for each $x\in\M$, the mapping:
\begin{equation*}
  \label{eq:26}
  \Phi_x:\BM\to\up x\,,\quad \xi\mapsto x\cdot\xi\,,
\end{equation*}
is a bijection. Therefore, we extend the notation $y-x$, licit for
$x,y\in\M$ with $x\leq y$, by allowing $y$ to range over~$\BM$, as
follows:
\begin{equation*}
  \label{eq:28}
  \forall x\in\M\quad\forall\xi\in\up x\quad\xi-x=\Phi_x^{-1}(\xi)\,.
\end{equation*}

Hence $\zeta=\xi-x$ denotes the tail of $\xi$ ``after''~$x$, for
$\xi\geq x$. It is characterized by the property $x\cdot\zeta=\xi$\,,
and this allows us to introduce the following definition.

\begin{definition}
  \label{def:1}
  Let $V:\BM\to\Mbar$ be an \ast. The \emph{shift operator associated
    to~$V$} is the mapping\/ $\theta_V:\BM\to\BM$, which is partially
  defined by:
\begin{equation*}
  \label{eq:29}
  \forall\xi\in\BM\quad \xi_V\in\M\implies\theta_V(\xi)=\xi-\xi_V\,.
\end{equation*}
The domain of definition of\/ $\theta_V$ is\/ $\{\m{\xi_V}<\infty\}$\,.
\end{definition}

\subsection{The Strong Bernoulli Property}
\label{sec:strong-bern-prop}

The Strong Bernoulli property has with respect to the definition of
Bernoulli measures, the same relationship than the Strong Markov
property with respect to the mere definition of Markov chains. Its
formulation is also similar (see for instance~\cite{revuz75}). In
particular, it involves a \slgb\ associated with an \ast, defined as
follows.

\begin{definition}
  \label{def:2}
  Let $V:\BM\to\Mbar$ be an\/ \ast, and let $\V$ be the collection of
  finite values assumed by~$V$. We define the \slgb\ $\FFF_V$ as
\begin{gather*}
\FFF_V=\sigma\langle \up x\tq x\in\V\rangle\,.
\end{gather*}
\end{definition}

With the above definition, we have the following result.

\begin{theorem}
  \label{thr:1}
  \textbf{{\sffamily(Strong Bernoulli Property)}}\quad Let\/ $\pr$ be
  a Bernoulli measure on~$\BM$\,, let $V:\BM\to\Mbar$ be an \ast\ and
  let \mbox{$\psi:\BM\to\bbR$} be a $\FFF$-measurable function, either
  non negative or\/ $\pr$-integrable. Extend $\psi\circ\theta_V$\,,
  which is only defined on $\{|\xi_V|<\infty\}$, by
  $\psi\circ\theta_V=0$ on $\{|\xi_V|=\infty\}$.  Then:
  \begin{gather}
\label{eq:40}
    \esp(\psi\circ\theta_V|\FFF_V)=\esp(\psi)\,,\quad\text{\pas\ on
      $\{|\xi_V|<\infty\}$\,,}
  \end{gather}
  denoting by $\esp(\cdot)$ the expectation with respect to\/~$\pr$,
  and by $\esp(\cdot|\FFF_V)$ the conditional expectation with respect
  to\/~$\pr$ and to the \slgb\/~$\FFF_V$\,.
\end{theorem}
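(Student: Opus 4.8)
The statement is the Strong Bernoulli Property, the analogue of the Strong Markov Property, so the plan is to mimic the classical proof: reduce to elementary cylinders by a monotone-class / $\pi$-$\lambda$ argument, and on cylinders use the multiplicativity property~\eqref{eq:13} of Bernoulli measures together with the defining property~\ref{item:7} of an \ast. Since $\FFF_V=\sigma\langle\up x\tq x\in\V\rangle$ and the family $\{\up x\tq x\in\V\}$ is stable under finite intersections (for $x,y\in\V$ compatible, $\up x\cap\up y=\up(x\vee y)$, and if they are incompatible the intersection is empty; moreover recall $\{V=x\}=\up x$ for $x\in\V$), it suffices to prove that for every $x\in\V$ and every $\FFF$-measurable $\psi$ of the stated type,
\begin{gather*}
\esp\bigl(\un_{\{\xi_V=x\}}\cdot(\psi\circ\theta_V)\bigr)=\pr(\up x)\cdot\esp(\psi)\,.
\end{gather*}
Indeed, once this is established for all $x\in\V$, both sides of~\eqref{eq:40}, integrated against any generator $\up x$ of $\FFF_V$, agree, and a standard monotone class argument extends the identity to all of $\FFF_V$; the case $\{|\xi_V|=\infty\}$ is handled by the convention $\psi\circ\theta_V=0$ there, so the claimed identity holds $\pr$-a.s.\ on $\{|\xi_V|<\infty\}=\bigsqcup_{x\in\V}\{\xi_V=x\}$.

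**Key step: the cylinder identity.** On the event $\{\xi_V=x\}=\up x$ we have, by Definition~\ref{def:1}, $\theta_V(\xi)=\xi-x=\Phi_x^{-1}(\xi)$, i.e.\ $\theta_V$ restricted to $\up x$ is exactly the bijection $\Phi_x^{-1}:\up x\to\BM$. Hence, for $\psi=\un_{\up y}$ with $y\in\M$ a heap, $\psi\circ\theta_V(\xi)=\un\{\,y\leq\xi-x\,\}=\un\{\,x\cdot y\leq\xi\,\}$ for $\xi\in\up x$, so that
\begin{gather*}
\esp\bigl(\un_{\{\xi_V=x\}}\cdot(\un_{\up y}\circ\theta_V)\bigr)=\pr\bigl(\up x\cap\up(x\cdot y)\bigr)=\pr\bigl(\up(x\cdot y)\bigr)=\pr(\up x)\cdot\pr(\up y)\,,
\end{gather*}
using $\up(x\cdot y)\subseteq\up x$ and the multiplicativity~\eqref{eq:13}. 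This is precisely $\pr(\up x)\cdot\esp(\un_{\up y})$. One then bootstraps: the identity extends from indicators of elementary cylinders to finite linear combinations, then to all non-negative $\FFF$-measurable $\psi$ by monotone convergence (the class of elementary cylinders generates $\FFF$ and is a $\pi$-system), and finally to $\pr$-integrable $\psi$ by splitting into positive and negative parts. Summing over $x\in\V$ gives $\esp\bigl(\un_{\{|\xi_V|<\infty\}}\cdot(\psi\circ\theta_V)\bigr)=\sum_{x\in\V}\pr(\up x)\,\esp(\psi)$, and dividing the contribution on each atom $\{\xi_V=x\}$ of $\FFF_V\rest{\{|\xi_V|<\infty\}}$ by $\pr(\up x)>0$ yields~\eqref{eq:40}.

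**Main obstacle.** The delicate point is not the Bernoulli computation but the measurability bookkeeping around $\theta_V$: one must check that $\theta_V$ is measurable on each $\{\xi_V=x\}$, that $\psi\circ\theta_V$ extended by $0$ is genuinely $\FFF$-measurable on all of $\BM$, and — crucially — that on the generator $\up x$ of $\FFF_V$ the event $\{\xi_V=x\}$ really equals $\up x$ and not some smaller set; this is exactly where Definition~\ref{def:3}\ref{item:7} is used, via the remark $\{V=x\}=\up x$ for $x\in\V$ recorded just after Definition~\ref{def:1} of the shift. A secondary subtlety is that $\V$ is at most countable (values of $V$ are heaps, and $\M$ is countable), so the disjoint decomposition $\{|\xi_V|<\infty\}=\bigsqcup_{x\in\V}\up x$ and the interchange of sum and expectation are legitimate; and that $\{\up x\tq x\in\V\}$ is a $\pi$-system, so that agreement of the two measures $A\mapsto\esp(\un_A\psi\circ\theta_V)$ and $A\mapsto\pr(A)\esp(\psi)$ on this generating $\pi$-system forces agreement on $\FFF_V$, which is what the conditional-expectation statement~\eqref{eq:40} encodes.
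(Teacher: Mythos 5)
Your proposal is correct and follows essentially the same route as the paper's proof: reduction to $\psi=\un_{\up y}$, the identity $\un_{\up y}\circ\theta_V=\un_{\up(x\cdot y)}$ on the atom $\{V=x\}=\up x$, and the multiplicativity $\pr(\up(x\cdot y))=\pr(\up x)\pr(\up y)$, with the conditional expectation computed atom by atom on the (at most countable, pairwise disjoint) family $\{\up x\tq x\in\V\}$. The only cosmetic difference is that you phrase the conclusion via integration against generators of $\FFF_V$ where the paper exploits directly that $\FFF_V$ is atomic; these are equivalent here.
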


If $V:\BM\to\M$ is \pas\ finite, then the Strong Bernoully Property
writes as:
\begin{gather*}
\pas\quad  \esp(\psi\circ\theta_V|\FFF_V)=\esp(\psi)\,.
\end{gather*}

In the general case, we may still have an equality valid
$\pr$-almost surely by multiplying both members of~(\ref{eq:40}) by
the characteristic function~$\un_{\{V\in\M\}}$\,, which is
$\FFF_V$-measurable, as follows:
\begin{gather}
\pas\quad \esp(\un_{\{V\in\M\}}\psi\circ\theta_V|\FFF_V)=\un_{\{V\in\M\}}\esp(\psi)\,.
\end{gather}

\begin{proof}
It is enough to show the result for $\psi$ of the form $\psi=\un_{\up
  y}$ for some heap $y\in\M$. For such a function~$\psi$, let
$Z=\esp(\psi\circ\theta_V|\FFF_V)$\,. 

Let $\V$ be the set of finite values assumed by~$V$. We note that the
cylinders~$\up x$\,, for $x$ ranging over~$\V$, are pairwise disjoint
since $V(\xi)=x$ on $\up x$ for $x\in\V$. Hence $\FFF_V$ is
atomic. Therefore, if $\hat Z:\V\to\bbR$ denotes the function defined
by:
 \begin{gather*}
   \forall x\in\V\qquad\hat Z(x)=\esp(\psi\circ\theta_V|V=x)\,,
 \end{gather*}
 then a version  of $Z$ is given by:
\begin{gather*}
  Z(\xi)=
  \begin{cases}
0\,,&\text{if $V(\xi)=\xi$\,,} \\
\hat Z(x)\,,&\text{if $V(\xi)=x$ with $x\in\M$\,.}
  \end{cases}
\end{gather*}

For $x\in\V$ and for $\xi\geq x$, one has
\begin{gather*}
\psi\circ\theta_V(\xi)=\psi(\xi-x)=\un_{\up y}(\xi-x)=\un_{\up(x\cdot y)}(\xi)\,.
\end{gather*}
And since $\{V=x\}=\up x$ for $x\in\V$, this yields:
\begin{align*}
  \hat Z(x)=\esp(\psi\circ\theta_V|\up x)=\frac1{\pr(\up
    x)}\pr(\up(x\cdot y))=\pr(\up y)=\esp\psi\,,
\end{align*}
by the multiplicativity property of~$\pr$\,. The proof is complete. 
\end{proof}

\section{Iterating Asynchronous Stopping Times}
\label{sec:iter-asynchr-stopp}

This section studies the iteration of asynchronous stopping times,
defined in a very similar way as the iteration of classical stopping
times for standard probabilistic processes; see for
instance~\cite{revuz75}. Properly dealing with iterated \ast\ is a
typical example of use of the Strong Bernoulli property, as in
Proposition~\ref{prop:3} below.

\subsection{Iterated Stopping Times}
\label{sec:iter-stopp-times}

\begin{proposition}
\label{prop:4}
Let $V:\BM\to\Mbar$ be an\/ \ast. Let $V_0=0$\,, and
define the mappings $V_n:\BM\to\Mbar$ by induction as follows:
\begin{align*}
\forall\xi\in\BM\qquad V_{n+1}(\xi)=
\begin{cases}
  \xi\,,&\text{if $V_n(\xi)\in\BM$}\\
V_n(\xi)\cdot V\bigl(\xi-V_n(\xi)\bigr)\,,&\text{if $V_n(\xi)\in\M$}
\end{cases}
\end{align*}
Then $(V_n)_{n\geq0}$ is a sequence of \ast.
\end{proposition}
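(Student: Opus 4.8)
The plan is to prove the statement by induction on $n$, showing at each stage that $V_n$ satisfies the two defining properties of an \ast\ from Definition~\ref{def:3}: namely $V_n(\xi)\leq\xi$ for all $\xi$, and the "stabilization" property that whenever $V_n(\xi)$ is finite and $V_n(\xi)\leq\xi'$, then $V_n(\xi')=V_n(\xi)$. The base case $n=0$ is trivial since $V_0=0$ is the constant map to the empty heap, which is an \ast. For the inductive step I assume $V_n$ is an \ast\ and analyze $V_{n+1}$ according to the two cases in its definition.

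First I would check the domination property $V_{n+1}(\xi)\leq\xi$. If $V_n(\xi)\in\BM$ then $V_{n+1}(\xi)=\xi$ and there is nothing to prove. If $V_n(\xi)=x\in\M$, then by the induction hypothesis $x\leq\xi$, so $\zeta=\xi-x=\theta_{V_n}(\xi)$ is well-defined in $\BM$ with $x\cdot\zeta=\xi$; applying property~\ref{item:5} of Definition~\ref{def:3} to the \ast\ $V$ gives $V(\zeta)\leq\zeta$, and concatenating on the left with $x$ (using that $u\leq v\implies x\cdot u\leq x\cdot v$, which follows from the definition of $\leq$ and associativity of concatenation on $\Mbar$) yields $x\cdot V(\zeta)\leq x\cdot\zeta=\xi$, i.e.\ $V_{n+1}(\xi)\leq\xi$.

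Next I would verify the stabilization property. Suppose $V_{n+1}(\xi)\in\M$ and $V_{n+1}(\xi)\leq\xi'$ for some $\xi'\in\BM$. Since $V_{n+1}(\xi)$ is finite we must be in the second case of the definition, so $V_n(\xi)=x\in\M$ and $V_{n+1}(\xi)=x\cdot V(\zeta)$ with $\zeta=\xi-x$; in particular $V(\zeta)\in\M$. From $x\leq x\cdot V(\zeta)\leq\xi'$ and the induction hypothesis applied to $V_n$ we get $V_n(\xi')=x$ as well, hence $\xi'$ is in the second case too and $V_{n+1}(\xi')=x\cdot V(\xi'-x)$. Writing $\zeta'=\xi'-x$, the hypothesis $x\cdot V(\zeta)\leq x\cdot\zeta'=\xi'$ together with left-cancellation (valid since $\Phi_x$ is a bijection, equivalently using the cancellativity recalled in the excerpt) gives $V(\zeta)\leq\zeta'$; now property~\ref{item:7} of Definition~\ref{def:3} for the \ast\ $V$, applied to $\zeta$ and $\zeta'$, yields $V(\zeta')=V(\zeta)$, and therefore $V_{n+1}(\xi')=x\cdot V(\zeta')=x\cdot V(\zeta)=V_{n+1}(\xi)$, as required.

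The main obstacle is really just the careful bookkeeping around the left action of $\M$ on $\BM$: I need the two facts that left-concatenation by a fixed $x\in\M$ is monotone for $\leq$ on $\Mbar$, and that it is injective (so that $x\cdot u\leq x\cdot\zeta'$ forces $u\leq\zeta'-$prefix relations to descend). Both are available from \S\ref{sec:action-monoid-its}: monotonicity is immediate from the \lub-based definition of $x\cdot\xi$ and the characterization~\eqref{eq:22} of $\leq$, and injectivity is the statement that $\Phi_x$ is a bijection onto $\up x$. Once these are in hand the induction closes cleanly; there is no genuine difficulty beyond organizing the case distinction. I would also remark in passing that $V_n\leq V_{n+1}$ and that the construction is exactly the heap analogue of iterating a classical stopping time, which justifies the terminology used in the next section.
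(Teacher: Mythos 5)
Your proof is correct and follows essentially the same route as the paper: induction on $n$, use of the induction hypothesis to identify $V_n(\xi')=V_n(\xi)=x$, left-cancellation to descend from $x\cdot V(\zeta)\leq x\cdot\zeta'$ to $V(\zeta)\leq\zeta'$, and then property~\ref{item:7} of $V$ to conclude. The only difference is that you also verify the domination property $V_{n+1}(\xi)\leq\xi$ explicitly, which the paper treats as immediate and omits.
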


\begin{proof}
  The proof is by induction on the integer $n\geq0$. 

  The case $n=0$ is trivial. Hence, for $n\geq1$, and assuming that
  $V_{n-1}$ is an \ast, let $\xi,\xi'\in\BM$ be such that:
\begin{gather*}
  \begin{aligned}
    V_n(\xi)&\in\M\,,& V_n(\xi)&\leq\xi'\,.
  \end{aligned}
\end{gather*}

It implies in particular that $V_{n-1}(\xi)\in\M$ and
  $V_{n-1}(\xi)\leq\xi'$\,, from which follows by the induction
  hypothesis that $V_{n-1}(\xi')=V_{n-1}(\xi)$\,. Putting
  $x=V_{n-1}(\xi)=V_{n-1}(\xi')$ on the one hand, there are thus two
  infinite heaps $\zeta$ and $\zeta'$ such that $\xi=x\cdot\zeta$ and
  $\xi'=x\cdot\zeta'$\,. Putting $y=V(\xi-V_{n-1}(\xi))$ on the other
  hand, the assumption $V_n(\xi)\leq\xi'$ writes as:
  $ x\cdot y\leq x\cdot\zeta'$, which implies $y\leq\zeta'$ by
  cancellativity of the monoid. But since $V$ is an \ast, this implies
  in turn $V(\zeta')=y$, and finally, by definition of~$V_n$\,:
  \begin{gather*}
V_n(\xi')=V_{n-1}(\xi')\cdot V(\xi'-V_{n-1}(\xi'))=x\cdot
V(\zeta')=x\cdot y=V_n(\xi)\,.
  \end{gather*}
This shows that $V_n$ is an \ast, completing the induction.
\end{proof}

\begin{definition}
  \label{def:5}
Let $V:\BM\to\Mbar$ be an\/ \ast. The sequence $(V_n)_{n\geq0}$ of\/ \ast\
defined as in Proposition\/~{\normalfont\ref{prop:3}} is called the
\emph{iterated sequence of stopping times} associated with~$V$.
\end{definition}

\begin{proposition}
  \label{prop:3}
  Let\/ $\pr$ be a Bernoulli measure equipping the boundary~$\BM$.
  Let $(V_n)_{n\geq0}$ be the iterated sequence of stopping times
  associated with an \ast\/ $V:\BM\to\Mbar$ which we assume to be\/
  \pas\ finite. Let also $(\Delta_n)_{n\geq1}$ be the sequence of
  increments:
\begin{align*}
  \forall n\geq0\qquad \Delta_{n+1}&=V\circ\theta_{V_{n}}\,,&
  V_{n+1}&=V_n\cdot\Delta_{n+1}\,.
\end{align*}
Then $(\Delta_n)_{n\geq1}$ is an \iid\ sequence of random variables
with values in\/~$\M$, with the same distribution as~$V$.
\end{proposition}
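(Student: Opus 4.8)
The goal is to show that the increments $(\Delta_n)_{n\geq1}$ defined by $\Delta_{n+1}=V\circ\theta_{V_n}$ are i.i.d.\ with the common law of~$V$. The natural tool is the Strong Bernoulli Property (Theorem~\ref{thr:1}), applied at the successive iterated stopping times $V_n$, which are themselves \ast\ by Proposition~\ref{prop:4}. First I would fix a test function and reduce the claim to an identity about joint distributions: it suffices to show that for every $n\geq1$ and every choice of measurable (non-negative, say) functions $\psi_1,\dots,\psi_n:\M\to\bbR$, one has
\begin{gather*}
  \esp\bigl(\psi_1(\Delta_1)\cdots\psi_n(\Delta_n)\bigr)=\esp\bigl(\psi_1(V)\bigr)\cdots\esp\bigl(\psi_n(V)\bigr)\,,
\end{gather*}
where I regard $V$ as an $\M$-valued variable using that $V$ is \pas\ finite. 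This single identity encodes both that each $\Delta_k$ has the law of $V$ and that they are independent.

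The argument is an induction on~$n$, peeling off the last factor. The key point is that $\Delta_n=V\circ\theta_{V_{n-1}}$, and that the product $\psi_1(\Delta_1)\cdots\psi_{n-1}(\Delta_{n-1})$ is $\FFF_{V_{n-1}}$-measurable: indeed each $\Delta_k$ for $k\le n-1$ is a function of $V_{k}$ and $V_{k-1}$, hence of the finitely many cylinders $\up x$, $x\in\mathcal{V}_k$, all of which lie in $\FFF_{V_{n-1}}$ because $V_{k}\le V_{n-1}$ forces every finite value of $V_k$ to be a sub-heap of a finite value of $V_{n-1}$. Conditioning on $\FFF_{V_{n-1}}$ and using Theorem~\ref{thr:1} with $\psi=\psi_n(V)$, which is $\FFF$-measurable and non-negative, gives
\begin{gather*}
  \esp\bigl(\psi_n(\Delta_n)\mid\FFF_{V_{n-1}}\bigr)=\esp\bigl(\psi_n\circ\theta_{V_{n-1}}\mid\FFF_{V_{n-1}}\bigr)=\esp\bigl(\psi_n(V)\bigr)\quad\text{\pas}\,,
\end{gather*}
using \pas\ finiteness of $V_{n-1}$ so that $\{|\xi_{V_{n-1}}|<\infty\}$ has full measure and the extension-by-zero convention is immaterial. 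Pulling out the $\FFF_{V_{n-1}}$-measurable factor and taking expectations reduces the $n$-fold product to $\esp(\psi_n(V))$ times the $(n-1)$-fold product, and the induction hypothesis finishes it.

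I would also record, as the base case, that $\Delta_1=V\circ\theta_{V_0}=V\circ\theta_0=V$ since $V_0=0$, so $\Delta_1$ trivially has the law of $V$. The main obstacle is bookkeeping: one must check carefully that $V_{n-1}$ is \pas\ finite (so that Theorem~\ref{thr:1} applies cleanly and $\theta_{V_{n-1}}$ is defined \pas), which follows by induction from \pas\ finiteness of $V$ together with the fact that $V_{n}=V_{n-1}\cdot(V\circ\theta_{V_{n-1}})$ is finite whenever both $V_{n-1}$ and $V\circ\theta_{V_{n-1}}$ are; and that the increments $\Delta_k$ for $k\le n-1$ are genuinely $\FFF_{V_{n-1}}$-measurable, which rests on the monotonicity $V_{k}\le V_{n-1}$ and the remark that $\{V_k=x\}=\up x$ for finite values $x$. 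Once these measurability and finiteness facts are in place, the Strong Bernoulli Property does all the probabilistic work and the conclusion follows.
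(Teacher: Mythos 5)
Your proposal is correct and follows essentially the same route as the paper: reduce the claim to the product identity $\esp(\psi_1(\Delta_1)\cdots\psi_n(\Delta_n))=\esp\psi_1(V)\cdots\esp\psi_n(V)$, induct on $n$, apply the Strong Bernoulli property at the \ast\ $V_{n-1}$ to peel off the last factor, and use the $\FFF_{V_{n-1}}$-measurability of the earlier increments. The one step you under-justify is the \pas\ finiteness of $V_{n-1}$: the a.s.\ finiteness of $V$ does not by itself give that $V\circ\theta_{V_{n-1}}$ is a.s.\ finite (the shift could a priori push mass onto the null set $\{V\notin\M\}$), and the paper settles this by a separate application of the Strong Bernoulli property to $\psi=\un_{\{V\in\M\}}$, yielding $\pr(V_n\in\M)=\pr(V_{n-1}\in\M)=\cdots=1$.
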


\begin{proof}
  We first show that $V_n\in\M$ for all integers $n\geq1$ and
  $\pr$-almost surely. For this, we apply the Strong Bernoulli
  property (Theorem~\ref{thr:1}) with \ast\ $V_{n-1}$ and with the
  function $\psi=\un_{\{V\in\M\}}$ to get:
  \begin{gather*}
    \pas\quad
    \esp\bigl(\un_{\{V_{n-1}\in\M\}}\psi\circ\theta_{V_{n-1}}|\FFF_{V_{n-1}})=\un_{\{V_{n-1}\in\M\}}\esp\psi\,.
  \end{gather*}
  But
  $\un_{\{V_n\in\M\}}=\un_{\{V_{n-1}\in\M\}}\psi\circ\theta_{V_{n-1}}$\,,
  and $\esp\psi=\pr(V\in\M)=1$ by hypothesis. Hence the equation above
  writes as:
\begin{gather*}
\pas\quad\esp(\un_{\{V_n\in\M\}}|\FFF_{V_{n-1}})=\un_{\{V_{n-1}\in\M\}}\,.
\end{gather*}
Taking the expectations of both members yields:
$\pr(V_n\in\M)=\pr(V_{n-1}\in\M)$\,. Hence by induction, since
$\pr(V_0\in\M)=1$\,, we deduce that $\pr(V_n\in\M)=1$ for all integers $n\geq1$.

To complete the proof of the proposition, we show that for any non
negative functions $\varphi_1,\ldots,\varphi_n:\M\to\bbR$\,, holds:
\begin{gather}
\label{eq:31}
  \esp\bigl(\varphi_1(\Delta_1)\cdot\ldots\cdot\varphi_n(\Delta_n)\bigr)
=\esp\varphi_1(V)\cdot\ldots\cdot\esp\varphi_n(V)\,.
\end{gather}

The case $n=0$ is trivial. Assume the hypothesis true at rank
$n-1\geq0$. Applying the Strong Bernoulli property
(Theorem~\ref{thr:1}) with the \ast\ $V_{n-1}$ yields, since $V_{n-1}\in\M$
$\pr$-almost surely:
\begin{gather}
  \label{eq:32}
\pas\quad\esp\bigl(\varphi_n(\Delta_n)|\FFF_{V_{n-1}}\bigr)=\esp\varphi_n(V)\,.
\end{gather}

Let $A$ be the left-hand member of~(\ref{eq:31}).  Since
$\Delta_1,\ldots,\Delta_{n-1}$ are $\FFF_{V_n}$-measurable, we compute
as follows, using the standard properties of conditional
expectation~\cite{billingsley95}:
\begin{align*}
  A&=\esp\bigl(\esp(\varphi_1(\Delta_1)\cdot\ldots\cdot\varphi_n(\Delta_n)|\FFF_{V_{n-1}})\bigr)\\
  &=\esp\bigl(\varphi_1(\Delta_1)\cdot\ldots\cdot\varphi_{n-1}(\Delta_{n-1})\cdot\esp(\varphi_n(\Delta_n)|\FFF_{V_{n-1}})\bigr)\\
  &=\esp(\varphi_1(\Delta_1)\cdot\ldots\cdot\varphi_{n-1}(\Delta_{n-1})\bigr)\cdot\esp\varphi_n(V)&\text{by~(\ref{eq:32})}\\
  &=\esp\varphi_1(V)\cdot\ldots\cdot\esp\varphi_n(V)\,,
\end{align*}
the later equality by the induction hypothesis. This
proves~(\ref{eq:31}).
\end{proof}

\subsection{Exhaustive Asynchronous Stopping Times}
\label{sec:exha-asynchr-stopp}

\begin{lemma}
  \label{lem:1}
Let $V$ be an\/ \ast, that we assume to be \pas\ finite. Let
$(V_n)_{n\geq0}$ be the associated sequence of iterated stopping
times. Then the two following properties are equivalent:
\begin{enumerate}[(i)]
\item\label{item:15} $\FFF=\bigvee_{n\geq0}\FFF_{V_n}$\,.
\item\label{item:16} $\xi=\bigvee_{n\geq0}V_n(\xi)$ for \pas\ every $\xi\in\BM$.
\end{enumerate}
\end{lemma}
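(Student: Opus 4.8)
The plan is to prove the two implications separately. The implication \ref{item:16}$\implies$\ref{item:15} is the easier direction. Suppose $\xi=\bigvee_{n\geq0}V_n(\xi)$ holds $\pr$-almost surely. I would show that every elementary cylinder $\up y$, for $y\in\M$, is (up to a $\pr$-null set) in $\bigvee_{n\geq0}\FFF_{V_n}$. Fix $y\in\M$. For $\pr$-almost every $\xi$, since $V_n(\xi)$ is a non-decreasing sequence of finite heaps with \lub\ equal to $\xi$, the finiteness property of elements of $\M$ (point~\ref{item:12} of Proposition~\ref{prop:1}) gives that $y\leq\xi$ if and only if $y\leq V_n(\xi)$ for some $n$. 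Hence $\up y$ coincides, modulo a null set, with $\bigcup_{n\geq0}\{V_n\in\M,\ y\leq V_n\}$. Now each set $\{V_n\in\M,\ y\leq V_n\}=\bigcup_{x\in\V_n\tq y\leq x}\up x$ (where $\V_n$ is the countable set of finite values of $V_n$), and each such $\up x$ is in $\FFF_{V_n}$ by definition of $\FFF_{V_n}$. Therefore $\up y\in\bigvee_{n\geq0}\FFF_{V_n}$ up to a null set, and since the $\up y$ generate $\FFF$, we conclude $\FFF=\bigvee_{n\geq0}\FFF_{V_n}$ (completing with null sets if necessary, which is harmless for the stated equality of $\sigma$-algebras at the level of the measure).

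For the converse \ref{item:15}$\implies$\ref{item:16}, I would argue by contraposition or directly by a measure-theoretic argument. Set $\eta=\bigvee_{n\geq0}V_n$; since the $V_n$ are \ast\ and the sequence is non-decreasing, $\eta$ is a well-defined $\BM$-valued random variable with $\eta\leq\xi$ always, and the event $\{\eta<\xi\}$ (meaning $\eta\neq\xi$) is what we want to show is $\pr$-null. The key observation is that $\eta$ is measurable with respect to $\bigvee_{n\geq0}\FFF_{V_n}$: indeed each $V_n$ is $\FFF_{V_n}$-measurable (on $\{V_n\in\M\}$, which is $\pr$-full here since $V$ is \pas\ finite — one needs $\pr(V_n\in\M)=1$, which follows from the computation in the proof of Proposition~\ref{prop:3}), hence $\eta$ is measurable for the join. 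If \ref{item:15} holds, then $\eta$ is $\FFF$-measurable, but more to the point, I would show that for every $y\in\M$ the event $\{y\leq\xi\}\setminus\{y\leq\eta\}$ is null. The cleanest route is to use the Bernoulli measure: compute $\pr(y\leq\eta)$ via the increments. Write $V_n=\Delta_1\cdots\Delta_n$ with the $\Delta_i$ i.i.d.\ distributed as $V$ (Proposition~\ref{prop:3}); then $\un V_n=\sum_{i\leq n}\un\Delta_i\to\infty$ $\pr$-a.s.\ since $\esp(\un V)\geq1>0$ (as $V\neq0$ forces $\pr(\un V\geq1)>0$; if $V=0$ identically then \ref{item:15} fails trivially since $\FFF_{V_n}$ is trivial, so we may assume $V\neq0$). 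Thus $\m{V_n}\to\infty$ a.s., so $\eta\in\BM$, i.e.\ $\eta$ is a genuine infinite heap $\pr$-a.s.

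The heart of the matter — and the step I expect to be the main obstacle — is the following dichotomy: either $\eta=\xi$ $\pr$-a.s., or the inclusion $\FFF_{\bigvee}:=\bigvee_n\FFF_{V_n}\subsetneq\FFF$ is strict as a $\pr$-completed $\sigma$-algebra. I would establish this by showing $\FFF_{\bigvee}=\sigma\langle\up x\tq x\leq\eta\text{ possible}\rangle$ — more precisely, that conditionally on $\FFF_{\bigvee}$, the "remaining" randomness $\xi-\eta$ (defined on $\{\eta\in\BM\}$... which needs care since $\eta$ is infinite) carries the rest of $\FFF$. The technically delicate point is that $\eta\in\BM$, so $\xi-\eta$ is not literally defined; instead I would work with the increasing exhaustion: for each fixed $m$, $\theta_{V_m}(\xi)=\xi-V_m$ is a bona fide infinite heap, and by the Strong Bernoulli property $\theta_{V_m}(\xi)$ has law $\pr$ and is independent of $\FFF_{V_m}$. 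One then shows $\FFF_{\bigvee}$ is, up to null sets, exactly $\bigcap_m \sigma\langle \FFF_{V_m}\rangle$-trivial complement... Concretely: pick any $y\in\M$; I claim $\{y\leq\xi\}$ is $\FFF_{\bigvee}$-measurable iff for a.e.\ $\xi$ the membership is decided at some finite stage, i.e.\ iff $y\leq\eta$ a.e.\ on $\{y\leq\xi\}$. Granting \ref{item:15}, this holds for all $y$; then $\{y\leq\xi\}=\{y\leq\eta\}$ mod null for all $y\in\M$, and since the countably many $\up y$ determine points of $\Mbar$ via point~\ref{item:17} of Proposition~\ref{prop:1} (elements of $\M$ form a basis: $\xi\geq\xi'\iff\forall x\in\M,\ x\leq\xi'\Rightarrow x\leq\xi$), we get $\xi=\eta$ $\pr$-a.s., which is \ref{item:16}. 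I would present this last deduction carefully, as the passage from "agreement on all cylinders $\up y$ up to null sets" to "equality of the two $\Mbar$-valued random variables a.s." requires intersecting countably many full-measure events and then invoking Proposition~\ref{prop:1}\ref{item:17}.
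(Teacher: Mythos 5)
Your direction (\ref{item:16})$\implies$(\ref{item:15}) is correct and is essentially the paper's argument: the finiteness property of elements of $\M$ (point~\ref{item:12} of Proposition~\ref{prop:1}) turns $\up y$ into a countable union of cylinders $\up v$ with $v$ a finite value of some $V_n$, each of which lies in $\FFF_{V_n}$. No complaint there.

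The converse direction, however, has a genuine gap, and you have located it yourself: the assertion ``$\{y\leq\xi\}$ is $\FFF_{\bigvee}$-measurable iff $y\leq\eta$ \pas\ on $\{y\leq\xi\}$'' is exactly the content to be proved, and your text asserts it (``Granting \ref{item:15}, this holds for all $y$'') rather than proving it. The surrounding remarks --- that $\theta_{V_m}(\xi)$ has law $\pr$ and is independent of $\FFF_{V_m}$, and the unfinished sentence about $\bigcap_m\sigma\langle\FFF_{V_m}\rangle$ --- do not supply a mechanism for deducing, from the abstract equality of $\sigma$-algebras, a pointwise statement about $\eta$. The missing mechanism is a martingale argument. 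Under (\ref{item:15}), the Martingale convergence theorem gives $\esp(\un_{\up y}\,|\,\FFF_{V_n})\to\un_{\up y}$ \pas\ (the limit is $\esp(\un_{\up y}\,|\,\bigvee_n\FFF_{V_n})=\un_{\up y}$ precisely because of (\ref{item:15})). Since $\FFF_{V_n}$ is atomic with atoms $\{V_n=v\}=\up v$, one computes this conditional expectation explicitly via the compatibility formula~(\ref{eq:9}): on $\up v$ it equals $\pr\bigl(\up(y-(y\wedge v))\bigr)$ when $y$ and $v$ are compatible, and $0$ otherwise. Hence for \pas\ every $\xi\in\up y$ one gets $\pr\bigl(\up(y-(y\wedge \xi_{V_n}))\bigr)\to1$; the sequence $y\wedge\xi_{V_n}$ is non-decreasing and bounded by $y$, hence eventually constant, and since the only heap $z\in\M$ with $\pr(\up z)=1$ is $z=0$, that constant must be $y$ itself, i.e.\ $y\leq\xi_{V_n}$ for $n$ large. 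Only then does your final step apply: intersecting over the countably many $y\in\M$ and invoking the basis property (point~\ref{item:17} of Proposition~\ref{prop:1}) yields $\eta=\xi$ \pas. That last deduction, and your reduction to cylinders, are fine; but without the martingale computation the implication (\ref{item:15})$\implies$(\ref{item:16}) is not established. (A minor additional remark: the digression on whether $\eta\in\BM$, via $\esp|V|\geq1$ and i.i.d.\ increments, is not needed once the above is in place, and your appeal to Proposition~\ref{prop:3} there presupposes a Bernoulli-measure argument that is anyway subsumed by the martingale computation.)
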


\begin{proof}
  (\ref{item:15}) implies~(\ref{item:16}).\quad By the Martingale
  convergence theorem \cite[Th.~35.6]{billingsley95}, we have for
  every $x\in\M$:
  \begin{gather}
\label{eq:10}
\pas\quad \lim_{n\to\infty}\esp(\un_{\up x}|\FFF_{V_n})=\un_{\up x}\,.
  \end{gather}

  Let $n\geq0$ be an integer, and let $\V_n$ denote the set of finite
  heaps assumed by~$V_n$\,. Then, by the \ast\ property of~$V_n$\,, we
  have \mbox{$\{V_n=y\}=\up y$} for every $y\in\V_n$\,, and thus, for
  every $x\in\M$ compatible with~$y$\,:
  \begin{align*}
    \pr(\up x\,| V_n=y)&=\pr(\up x\,|\up y)=\pr\bigl(\up(x-(x\wedge
    y))\bigr)&&\text{by~(\ref{eq:9})}
  \end{align*}

  Therefore, by~(\ref{eq:10}), we obtain for every $x\in\M$ and for
  \pas\ every $\xi\in\up x$:
\begin{align*}
\lim_{n\to\infty}  \pr(\up(x-(x\wedge\xi_{V_n}))=1.
\end{align*}

It implies that the sequence $x\wedge\xi_{V_n}$\,, which is eventually
constant since it is non decreasing and bounded by~$x$, eventually
reaches~$x$, since the only heap $y\in\M$ satisfying $\pr(\up y)=1$ is
$y=0$. In other words: $x\leq\xi_{V_n}$ for $n$ large enough. Hence:
$\bigvee_{n\geq0}\xi_{V_n}\geq x$ for every $x\in\M$ and for \pas\
every $\xi\in\up x$. In view of the basis property of~$\M$
(point~\ref{item:17} of Proposition~\ref{prop:1}), it follows that
$\bigvee_{n\geq0}\xi_{V_n}=\xi$ holds $\pr$-almost surely.

(\ref{item:16}) implies~(\ref{item:15}).\quad Let $\FFF'$ be the
\slgb:
\begin{gather*}
  \FFF'=\bigvee_{n\geq1}\FFF_{V_n}\,.
\end{gather*}
To show that $\FFF=\FFF'$\,, it is enough to show that $\up x\in\FFF'$
for every $x\in\M$. But for $x\in\M$, by assumption
$\bigvee_{n\geq1}V_n(\xi)\geq x$ for \pas\ every $\xi\in\up x$. By the
finiteness property of elements of~$\M$ (point~\ref{item:12} of
Proposition~\ref{prop:1}), it implies, for \pas\ every $\xi\in\up x$,
the existence of an integer $n\geq0$ such that $V_n(\xi)\geq
x$. Letting $\V$ denote the set of finite values assumed by any of
the~$V_n$\,, we have thus:
\begin{gather*}
  \up x=\bigcup_{v\in\V\tq v\geq x}\up v\,.
\end{gather*}
Since $\V$ is at most countable, it implies $\up x\in\FFF'$, which was
to be shown.
\end{proof}

\begin{definition}
  \label{def:6}
  A\/ \pas\ finite\/ \ast\ that satisfies any of the
  properties~(\ref{item:15})--(\ref{item:16}) of
  Lemma\/~{\normalfont\ref{lem:1}} is said to be \emph{exhaustive}.
\end{definition}

\subsection{Examples of Exhaustive Asynchronous Stopping Times}
\label{sec:exampl-exha-asynchr}

\begin{proposition}
  \label{prop:5}
  Both examples $V$ of \ast\ defined in
  Proposition\/~{\normalfont\ref{prop:2}} are exhaustive, and satisfy
  furthermore\/ $\esp|V|<\infty$.
\end{proposition}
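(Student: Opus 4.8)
The plan is to analyze both examples through the finite-state ergodic Markov chain of cliques $(C_n)_{n\ge1}$ of~\S\ref{sec:bernoulli-measures}, working throughout on the event $\Omega_0$ of full probability on which this chain visits each of its states infinitely often. On $\Omega_0$: each maximal clique of $(\Sigma,I)$ occurs infinitely often in the \CF\ decomposition of~$\xi$ (there is at least one such clique, $\Sigma$ being finite and non-empty); the clique $\{a\}$, and hence the piece~$a$, occurs infinitely often; and likewise every piece of~$\Sigma$ occurs infinitely often.

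I would first treat the integrability statement $\esp|V|<\infty$, which entails \pas\ finiteness. Since every clique has length at most~$|\Sigma|$, in case~\ref{item:8} one has $|V|\le|\Sigma|\cdot\min R_\xi$, where $\min R_\xi$ is the hitting time of the non-empty set of maximal cliques; in case~\ref{item:9}, the heap $\gamma_1\cdots\gamma_N$ belongs to $H_a(\xi)$, where $N$ is the hitting time of the non-empty set of cliques containing~$a$, so $|V|=|\min H_a(\xi)|\le|\gamma_1\cdots\gamma_N|\le|\Sigma|\cdot N$. In both cases the integer in play is the hitting time of a non-empty set of states by a finite irreducible chain, hence has a geometric tail and finite expectation; therefore $\esp|V|<\infty$, $V$ is \pas\ finite, and by Proposition~\ref{prop:3} so is every iterate~$V_n$. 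Thus $(V_n(\xi))_{n\ge0}$ is \pas\ a non-decreasing sequence of finite heaps dominated by~$\xi$, with \lub\ $\eta:=\bigvee_{n\ge0}V_n(\xi)\le\xi$; by Lemma~\ref{lem:1}, exhaustiveness reduces to showing $\eta=\xi$ \pas.

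For case~\ref{item:8}, since removing the first $m$ cliques of a \CF\ decomposition from the left yields the shifted \CF\ decomposition, an induction gives $V_n(\xi)=\gamma_1\cdots\gamma_{m_n}$ with $m_n$ the position of the $n$-th maximal clique of~$\xi$; on $\Omega_0$ one has $m_n\to\infty$, hence $\eta=\bigvee_n\gamma_1\cdots\gamma_{m_n}=\xi$. For case~\ref{item:9}, the same analysis identifies $V_n(\xi)$ with the down-set in~$\xi$ of the first $n$ occurrences of~$a$, so $\eta$ contains the down-set of every occurrence of~$a$ in~$\xi$. It then suffices to prove the key claim: \pas, every piece occurrence of~$\xi$ lies below some occurrence of~$a$ in~$\xi$. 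Granting it, any finite $z\le\xi$ lies below $\gamma_1\cdots\gamma_{\height(z)}$, whose finitely many piece occurrences are all below the first $n$ occurrences of~$a$ for $n$ large enough, so that $z\le\gamma_1\cdots\gamma_{\height(z)}\le V_n(\xi)\le\eta$ (both $\gamma_1\cdots\gamma_{\height(z)}$ and $V_n(\xi)$ being down-sets of~$\xi$); the basis property (point~\ref{item:17} of Proposition~\ref{prop:1}) then gives $\eta=\xi$.

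Proving the key claim is the main point, and the only delicate step. Fix a piece occurrence $e$ of~$\xi$ carrying piece~$p$, let $F$ be the set of occurrences of~$\xi$ dominating~$e$, and let $\Sigma_F$ be the set of pieces occurring infinitely often in~$F$; then $p\in\Sigma_F$, since the $p$-track above~$e$ lies in~$F$. If $q\in\Sigma_F$ and $(q,q')\in D$, fix a $q$-occurrence in~$F$: every $q'$-occurrence is comparable with it (dependent pieces never lie in a common antichain), and only finitely many $q'$-occurrences lie below it (down-sets of heap elements are finite), so, $q'$ occurring infinitely often on~$\Omega_0$, infinitely many $q'$-occurrences lie in~$F$ and $q'\in\Sigma_F$. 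Hence $\Sigma_F$ is closed under the dependence relation~$D$; since $(\Sigma,D)$ is connected and $\Sigma_F\neq\emptyset$, we conclude $\Sigma_F=\Sigma$, so $a\in\Sigma_F$ and $e$ lies below an occurrence of~$a$. The claim genuinely needs these hypotheses: a piece occurrence on a lane carrying no~$a$ would fail it, and it is the recurrence of the clique chain --- forcing a full $D$-path from~$p$ to~$a$ to be realised above~$e$ --- combined with connectedness of the dependence graph that excludes this outside a null set.
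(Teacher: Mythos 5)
Your proof is correct, and for the hardest part it takes a genuinely different route from the paper. The integrability bound and the treatment of the first example (cutting at the first maximal clique) essentially reproduce the paper's argument: positive recurrence of the finite irreducible chain of cliques, the bound $|\gamma_1\cdot\ldots\cdot\gamma_k|\leq\alpha k$, and the observation that each iterate is a genuine Cartier--Foata prefix ending at a maximal clique. Where you diverge is the exhaustiveness of the first hitting time of~$a$. The paper argues probabilistically: it first shows $\pr(V\geq b)>0$ for every piece $b$ by exhibiting a heap built along a $D$-path from $b$ to~$a$, then invokes the i.i.d.\ structure of the increments (hence the Strong Bernoulli property) together with Borel--Cantelli to get that almost surely $\xi\geq b$ implies $\bigvee_n V_n(\xi)\geq b$, and finally runs a minimal-counterexample argument on $y=\xi'\wedge x$. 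You instead work deterministically on the occurrence poset of~$\xi$, on the explicit full-measure event where every clique recurs in the Cartier--Foata decomposition: the set of pieces occurring cofinally above a fixed occurrence is closed under the dependence relation (comparability of dependent occurrences plus finiteness of down-sets), hence equals $\Sigma$ by connectedness of $(\Sigma,D)$, so every occurrence is dominated by an occurrence of~$a$. Both arguments hinge on the connectedness of the dependence graph; yours replaces the Strong Bernoulli property by a purely combinatorial statement about infinite heaps with recurrent clique chain, which is more transparent and gives a pathwise conclusion, whereas the paper's proof stays within the probabilistic machinery it has just built. The two places where you are terse --- the identification of $V_n(\xi)$ with the down-closure of the first $n$ occurrences of~$a$, and the equivalence between inclusion of down-closed sets of occurrences and the prefix order on heaps --- are standard facts about traces and are used correctly, so I see no gap.
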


\begin{proof}
  For both examples, that $|V|<\infty$ \pas\ and also that
  $\esp|V|<\infty$, follow from the two following facts:
  \begin{enumerate}
  \item The Markov chain of cliques $(C_k)_{k\geq1}$ such that
    $\xi=(C_1,C_2,\ldots)$ is irreducible with a finite number of
    states (see~\S~\ref{sec:bernoulli-measures}), and thus is positive
    recurrent.
  \item If $\alpha$ denotes the maximal size of a clique, then
    $|C_1\cdot\ldots\cdot C_k|\leq\alpha k$.
  \end{enumerate}

We now show that both examples are exhaustive. Let $(V_n)_{n\geq0}$
be the associated sequence of iterated stopping times. 

\textit{For $V$ defined in point~\ref{item:8} of
  Proposition~\ref{prop:2}.}\quad Since $V<\infty$ \pas, it follows
from Proposition~\ref{prop:3} that $V_n<\infty$ \pas\ and for all
integers $n\geq0$. Let $\xi=(\gamma_n)_{n\geq1}$ be an infinite
heap. Let $n\geq0$ be an integer, and let $c_1\to\ldots\to c_{k_n}$ be
the \CF\ decomposition of~$V_n(\xi)$. Then, on the one hand, $k_n\geq
n$, and on the other hand, since $c_{k_n}$ is maximal and since
$\xi\geq V_n(\xi)$, it must hold:
\begin{align*}
  \gamma_1&=c_1\,,&\gamma_2&=c_2\,,&&\ldots&\gamma_{k_n}&=c_{k_n}\,.
\end{align*}
Hence, if $\xi'=(\gamma'_n)_{n\geq1}$ denotes:
\begin{gather*}
  \xi'=\bigvee_{n\geq0}V_n(\xi)\,,
\end{gather*}
one has $\gamma'_i=\gamma_i$ for all $i\leq k_n$ and for all
$n\geq0$. And since $k_n\geq n$, it implies $\gamma_i=\gamma'_i$ for
all integers $i\geq1$, and thus $\xi'=\xi$. This proves that $V$ is
exhaustive. 

\textit{For $V$ defined in point~\ref{item:9} of
  Proposition~\ref{prop:2}.}\quad Let $V$ be the first hitting time of
$a\in\Sigma$. With the same notations as above, let us first show the
following claim:
\begin{enumerate}
\item[$(\Diamond)$] For every $b\in\Sigma$:\quad $\pr(V\geq b)>0$.
\item[$(\Diamond\Diamond)$] For every $b\in\Sigma$, and $\pr$-almost surely:\quad
  $\xi\geq b\implies \xi'\geq b$.
\end{enumerate}

\textit{Proof of\/~$(\Diamond)$.}\quad Since the dependence relation
$D=(\Sigma\times\Sigma)\setminus I$ is assumed to make the graph
$(\Sigma,D)$ connected, we pick a sequence $a_1,\ldots,a_j$ of
pairwise distinct pieces such that $a_1=b$, $a_j=a$ and
$(a_i,a_{i+1})\in D$ for all $i\in\{1,\ldots,j-1\}$\,. Put
$x=a_1\cdot\ldots\cdot a_j$\,. Then it is clear that $V(\xi)=x$ for
every $\xi\geq x$. Hence $\pr(V=x)=\pr(\up x)>0$. Since $b\leq x$, it
follows that $\pr(V\geq b)\geq\pr(\up x)>0$.

\textit{Proof of\/~$(\Diamond\Diamond)$.}\quad Let
$(\Delta_n)_{n\geq1}$ be the sequence of increments, such that
$V_{n+1}=V_n\cdot\Delta_{n+1}$\,. Then $(\Delta_n)_{n\geq1}$ being \iid\
with the same law as $V$ according to Proposition~\ref{prop:3}, and
since $\pr(V\geq b)>0$, it follows from Borel-Cantelli Lemma that
there exists at least an integer $n\geq1$ such that $\Delta_n\geq b$,
for \pas\ every $\xi\in\BM$. For \pas\ every $\xi\geq b$, let $n$ be
the smallest such integer. Then the heap
$\Delta_1\cdot\ldots\cdot\Delta_{n-1}$ does not contain any occurrence
of $b$ on the one hand, and is compatible with $b$ on the other
hand. That implies that $b$ commutes with all pieces of
$\Delta_1\cdot\ldots\cdot\Delta_{n-1}$\,. Therefore, it follows that
$b\leq\Delta_1\cdot\ldots\cdot\Delta_n\leq\xi'$\,. The claim
$(\Diamond\Diamond)$ is proved.

Now, to prove that $V$ is exhaustive, let $x\in\M$ be a heap. We show
that, \pas, $\xi\geq x\implies\xi'\geq x$, which will complete the
proof \textit{via} the basis property of~$\M$ (point~\ref{item:17} of
Proposition~\ref{prop:1}). Putting $y=\xi'\wedge x$, and assuming
$\xi\geq x$, we prove that $y=x$ holds $\pr$-almost surely. Assume
$y\neq x$. Since $y\leq x$, there is thus a piece $b\in\Sigma$ such
that $y\cdot b\leq x$ holds and $(\xi'-y)\geq b$ does not hold. Let
$N$ be the smallest integer such that $V_N(\xi)\wedge x=y$\,; such an
integer exists, by the finiteness property of~$\BM$
(point~\ref{item:12} of Proposition~\ref{prop:1}). Let
$z=V_N(\xi)$. Then it follows from the definition of the
sequence $(V_n)_{n\geq0}$ that holds:
\begin{gather*}
  \forall n\geq N\qquad V_n(\xi)=z\cdot V_{N-n}(\xi-z)\,.
\end{gather*}

According to the property $(\Diamond\Diamond)$, for \pas\ every $\xi$
such that $\xi-z\geq b$, there exists an integer $k\geq0$ such that
$V_k(\xi-z)\geq b$. But then, $V_{N+k}(\xi)\geq y\cdot b$, and thus
$\xi'\wedge x\geq y\cdot b$, contradicting the definition of~$y$. It
follows that $y\neq x$ can only occur with probability~$0$, which was
to be proved.
\end{proof}

\section{The Cut-Invariant Law of Large Numbers}
\label{sec:cut-invariant-law}

\subsection{Statement of the Law of Large Numbers}
\label{sec:statement-strong-law}

We first define ergodic sums and ergodic means associated with an
\ast\ and with a cost function. The setting of the section is the same
as previously: a heap monoid $\M=\M(\Sigma,I)$ together with a
Bernoulli measure $\pr$ on $(\BM,\FFF)$.

If $\varphi:\Sigma\to\bbR$ is a function, seen as a cost function, it
is clear that $\varphi$ has a unique extension on $\M$ which is
additive; we denote this extension by $\scal\varphi\cdot$. Hence, if
the $\Sigma$-word $x_1\ldots x_n$ is a representative of a heap~$x$, then:
\begin{gather*}
  \scal\varphi x=\varphi(x_1)+\ldots+\varphi(x_n)\,.
\end{gather*}

In particular, if $1$ denotes the constant function, equal to $1$
on~$\Sigma$, one has: $\scal1x=|x|$ for every $x\in\M$.

\begin{definition}
  \label{def:4}
  Let $V:\BM\to\Mbar$ be an\/ \ast, which we assume to be\/ \pas\
  finite, and let $\varphi:\Sigma\to\bbR$ be a cost function. Consider
  the sequence of iterated stopping times $(V_n)_{n\geq0}$ associated
  with~$V$. The\/ \emph{ergodic sums} associated with $V$ are the
 random variables in the sequence $(S_{V,n}\varphi)_{n\geq0}$ defined\/
  \pas\ by:
\begin{gather*}
\forall n\geq0\qquad  S_{V,n}\varphi=\scal\varphi{\xi_{V_n}}\,.
\end{gather*}

The \emph{ergodic means} associated with $V$ are the random variables
in the sequence $(M_{V,n})_{n\geq1}$ defined\/ \pas\ by:
\begin{gather*}
  \forall n\geq0\qquad M_{V,n}\varphi
=\frac{S_{V,n}\varphi}{S_{V,n}1}=\frac{\scal\varphi{\xi_{V_n}}}{|\xi_{V_n}|}\,.
\end{gather*}
\end{definition}

\begin{theorem}
  \label{thr:2}
  Let $\M(\Sigma,I)$ be a trace monoid, equipped with a Bernoulli
  measure $\pr$ on\/ $(\BM,\FFF)$ and with a cost function
  $\varphi:\Sigma\to\bbR$.

  Then, for every exhaustive asynchronous stopping time\/
  \mbox{$V:\BM\to\Mbar$} such that\/ \mbox{$\esp|V|<\infty$} holds,
  the ergodic means $(M_{V,n})_{n\geq1}$ converge \pas\ toward a
  constant. Furthermore, this constant does not depend on the choice
  of the exhaustive \ast\ $V$ such that \mbox{$\esp|V|<\infty$}.
\end{theorem}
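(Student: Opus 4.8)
The plan is to treat the two assertions of the theorem separately: the almost-sure convergence of the ergodic means reduces to the classical strong law of large numbers applied to the i.i.d.\ increments of~$V$, while the independence of the limit from~$V$ is obtained by comparing, for a typical infinite heap~$\xi$, the \emph{$V$-exhaustion} $(\xi_{V_n})_{n\geq0}$ with the Cartier--Foata exhaustion $Y_k(\xi)=C_1(\xi)\cdot\ldots\cdot C_k(\xi)$, whose ergodic behaviour is controlled by the Markov chain of cliques. For the convergence: by Proposition~\ref{prop:3} the increments $\Delta_k=V\circ\theta_{V_{k-1}}$ form an i.i.d.\ sequence distributed as~$V$, with $V_n=\Delta_1\cdot\ldots\cdot\Delta_n$; since $\scal\varphi\cdot$ and $\m\cdot$ are additive on~$\M$, one has $S_{V,n}\varphi=\sum_{k=1}^n\scal\varphi{\Delta_k}$ and $S_{V,n}1=\m{\xi_{V_n}}=\sum_{k=1}^n\m{\Delta_k}$. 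With $\|\varphi\|=\max_{a\in\Sigma}\m{\varphi(a)}$ one has $\m{\scal\varphi x}\leq\|\varphi\|\,\m x$, hence $\esp\m{\scal\varphi V}\leq\|\varphi\|\,\esp\m V<\infty$, and the strong law of large numbers gives $\frac1nS_{V,n}\varphi\to\esp\scal\varphi V$ and $\frac1nS_{V,n}1\to\esp\m V$ almost surely. An exhaustive \ast\ is non-zero, so $\m V\geq1$ \pas\ and $\esp\m V\geq1>0$; therefore $M_{V,n}\varphi\to c_V:=\esp\scal\varphi V/\esp\m V$ almost surely.

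For the independence, I would first reduce it to a statement about the chain of cliques. The chain $(C_k)_{k\geq1}$ being irreducible on the finite set~$\Cnv$, it admits a unique stationary distribution~$\pi$, and the ergodic theorem for finite Markov chains gives, \pas, $\frac1k\scal\varphi{Y_k(\xi)}\to\bar\varphi:=\sum_{\gamma}\pi(\gamma)\scal\varphi\gamma$ and $\frac1k\m{Y_k(\xi)}\to\bar\ell:=\sum_{\gamma}\pi(\gamma)\m\gamma\geq1$, whence $\scal\varphi{Y_k(\xi)}/\m{Y_k(\xi)}\to c_\infty:=\bar\varphi/\bar\ell$ \pas. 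As $c_\infty$ visibly does not involve~$V$, it suffices to prove $c_V=c_\infty$ for every exhaustive~$V$ with $\esp\m V<\infty$. Here the key elementary fact, immediate from~\eqref{eq:22}, is that every finite sub-heap $x\leq\xi$ satisfies $x\leq Y_{\height(x)}(\xi)$. Fixing such a~$V$ and writing $H_n=\height(\xi_{V_n})$ and $h_n=\max\{h\geq0\tq Y_h(\xi)\leq\xi_{V_n}\}$ --- both finite \pas\ since $\xi_{V_n}\in\M$ by Proposition~\ref{prop:3}, and both tending to~$\infty$ by the finiteness property of~$\M$ (point~\ref{item:12} of Proposition~\ref{prop:1}) together with exhaustiveness --- one obtains the sandwich
\begin{equation*}
  Y_{h_n}(\xi)\;\leq\;\xi_{V_n}\;\leq\;Y_{H_n}(\xi)\,.
\end{equation*}
Setting $D_n=\m{Y_{H_n}(\xi)}-\m{\xi_{V_n}}\geq0$, the sandwich yields $\m{\scal\varphi{\xi_{V_n}}-\scal\varphi{Y_{H_n}(\xi)}}\leq\|\varphi\|\,D_n$ and $\m{Y_{H_n}(\xi)}=\m{\xi_{V_n}}+D_n$; consequently, \emph{as soon as $D_n=o(n)$ almost surely} (equivalently $D_n=o(\m{\xi_{V_n}})$, since $\m{\xi_{V_n}}\sim n\esp\m V$), the ratios $M_{V,n}\varphi$ and $\scal\varphi{Y_{H_n}(\xi)}/\m{Y_{H_n}(\xi)}$ share the same \pas\ limit, and as $H_n\to\infty$ that limit is~$c_\infty$; hence $c_V=c_\infty$, completing the proof.

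Everything thus reduces to the defect estimate $D_n=o(n)$, which is the step I expect to require the most work. Since each clique has size at most~$\alpha$, the sandwich gives $D_n\leq\m{Y_{H_n}(\xi)}-\m{Y_{h_n}(\xi)}\leq\alpha(H_n-h_n)$, so it is enough that the ``half-filled window'' $H_n-h_n$ of Cartier--Foata layers grows sub-linearly --- morally this is where both hypotheses $\esp\m V<\infty$ and exhaustiveness enter. My attack would be: first, $H_n$ is a stopping time of the chain $(C_k)$, because $\{\xi_{V_n}\in\M,\ H_n\leq h\}=\{\xi_{V_n}\leq Y_h(\xi)\}$ by the elementary fact above, and since $\{V_n=x\}=\up x$ for every value $x$ in the (countable) set of finite values of the \ast\ $V_n$, and $\up x=\{\xi\tq x\leq Y_h(\xi)\}$ whenever $\height(x)\leq h$, this event is a countable union of $\sigma(C_1,\dots,C_h)$-measurable sets. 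Because $(C_k)$ is finite and irreducible, the Poisson equation on cliques $(I-P)u_\varphi=\scal\varphi\cdot-\bar\varphi$ has a bounded solution, so Dynkin's formula together with $\esp H_n\leq n\esp\m V<\infty$ gives $\esp\scal\varphi{Y_{H_n}(\xi)}=\bar\varphi\,\esp H_n+O(1)$ and $\esp\m{Y_{H_n}(\xi)}=\bar\ell\,\esp H_n+O(1)$; combining with $\esp\scal\varphi{\xi_{V_n}}=n\esp\scal\varphi V$, $\esp\m{\xi_{V_n}}=n\esp\m V$ (additivity and Proposition~\ref{prop:3}) and $\m{\esp\scal\varphi{\xi_{V_n}}-\esp\scal\varphi{Y_{H_n}(\xi)}}\leq\|\varphi\|\,\esp D_n$, and eliminating $\esp H_n$ between the two length relations, one gets $c_V=\bar\varphi/\bar\ell+O((1+\esp D_n)/n)$, so $c_V=c_\infty$ follows once $\esp D_n=o(n)$ is established. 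To get the latter --- that the $V$-exhaustion and the Cartier--Foata exhaustion of~$\xi$ advance at asymptotically proportional rates --- I would use the renewal structure of the $\Delta_k$: their integrability forces $\m{\Delta_n}=o(n)$, and Kingman's sub-additive ergodic theorem applied to $n\mapsto\height(\Delta_1\cdot\ldots\cdot\Delta_n)$ gives $H_n/n\to\gamma$ \pas\ for a constant~$\gamma$; it then remains to check, using exhaustiveness, that $\bar\ell\,\gamma=\esp\m V$, which is exactly $D_n/n\to0$. This last verification is the genuine obstacle of the argument.
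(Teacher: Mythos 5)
Your first part (almost-sure convergence of $M_{V,n}\varphi$ toward $\esp\scal\varphi V/\esp|V|$) is correct and is exactly the paper's argument: \iid\ increments from Proposition~\ref{prop:3}, the classical strong law, and non-degeneracy of $\esp|V|$ from exhaustiveness. For the uniqueness, however, your proposal contains a genuine gap that you yourself flag: everything is reduced to the defect estimate $D_n=o(n)$ (equivalently, to the identity $\bar\ell\,\gamma=\esp|V|$ relating the Kingman constant $\gamma$ for $\height(\xi_{V_n})/n$ to $\esp|V|$), and no proof of this estimate is given. This is not a routine verification to be filled in later --- it \emph{is} the cut-invariance. Nothing in the soft structure you invoke (integrability of the increments, sub-additivity of the height, exhaustiveness) prevents the unfilled region $Y_{H_n}(\xi)-\xi_{V_n}$ from accumulating linearly in~$n$; ruling that out requires a quantitative control on how far an exhaustion can overshoot a prescribed sub-heap, and your sketch offers no mechanism producing such a bound. (The surrounding reductions --- the sandwich $Y_{h_n}\leq\xi_{V_n}\leq Y_{H_n}$, the observation that $H_n$ is a stopping time of the clique chain, the Poisson-equation bookkeeping --- are fine, but they only transport the difficulty, they do not resolve it.)

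For comparison, the paper's route avoids the Cartier--Foata exhaustion altogether in this step: it compares an arbitrary exhaustive $W$ with $\esp|W|<\infty$ to the first hitting time $V$ of a piece~$a$, by stopping the $V$-iteration at the first index dominating $W_j$ and bounding the overshoot $\Delta_j=V'_j-W_j$. The quantitative input you are missing is supplied there by Lemmas~\ref{lem:4}--\ref{lem:7}: the matrix $B_{\gamma,\gamma'}=\un_{\{\gamma\to\gamma'\}}f(\gamma')$ has spectral radius~$1$ (proved via the Hat Lemma of~\cite{abbesmair14}), hence its restriction to cliques avoiding~$a$ has spectral radius $<1$, hence $\sum_{x\in\M'_a}|x|^2\,\pr(\up x)<\infty$, hence $\esp\bigl((|U_x|-|x|)^2\mid\up x\bigr)\leq C$ uniformly in~$x$; Lemma~\ref{lem:3} then gives $|\Delta_j|/j\to0$ \pas. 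If you want to salvage your Cartier--Foata comparison, you would need an analogous uniform moment bound on $|Y_{H_n}(\xi)|-|\xi_{V_n}|$ --- i.e.\ essentially the same spectral-radius machinery --- so the detour through the clique chain buys you the explicit value of the limit (which the paper defers to Corollary~\ref{cor:1}) but not a shortcut around the hard estimate.
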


Before we proceed with the proof of Theorem~\ref{thr:2}, we state a
corollary which provides a practical way of computing the limit of
ergodic means. 

\begin{corollary}
  \label{cor:1}
  Let $\varphi:\Sigma\to\bbR$ be a cost function. Let $\pi$ be the
  invariant measure of the Markov chain of cliques associated with a
  Bernoulli measure\/ $\pr$ on~$\BM$. Then the limit $M\varphi$ of the
  ergodic means~$M_{V,n}\varphi$\,, for any exhaustive \ast\ $V$ such
  that\/ $\esp|V|<\infty$ holds, is given by:
\begin{gather}
\label{eq:11}
  M\varphi=\Bigl(\sum_{\gamma\in\Cstar}\pi(\gamma)|\gamma|
\Bigr)^{-1}
\sum_{\gamma\in\Cstar}\pi(\gamma)\scal\varphi\gamma\,.
\end{gather}
\end{corollary}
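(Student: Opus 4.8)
The plan is to pick one convenient exhaustive \ast\ $V$, compute the limit of its ergodic means explicitly, and then invoke the cut-invariance part of Theorem~\ref{thr:2} to conclude that the same value is obtained for every admissible~$V$. The natural choice is the \ast\ $V$ from point~\ref{item:8} of Proposition~\ref{prop:2}: it is exhaustive with $\esp|V|<\infty$ by Proposition~\ref{prop:5}, and — crucially — for this particular $V$ the iterated stopping times $V_n$ are closely tied to the Cartier–Foata decomposition. More precisely, as shown in the proof of Proposition~\ref{prop:5}, if $\xi=(\gamma_1,\gamma_2,\dots)$ and $V_n(\xi)=c_1\cdots c_{k_n}$ in normal form, then $\gamma_i=c_i$ for all $i\le k_n$, and $k_n\to\infty$ \pas. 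Hence $\xi_{V_n}$ is exactly the prefix $C_1\cdot\ldots\cdot C_{k_n}$ of the clique process, with $k_n$ a random (\pas-finite, increasing) sequence of heights.

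With that identification, the ergodic sums and means rewrite in terms of the Markov chain of cliques: since $\scal\varphi{\,\cdot\,}$ is additive and $\xi_{V_n}=C_1\cdot\ldots\cdot C_{k_n}$, we have $S_{V,n}\varphi=\sum_{j=1}^{k_n}\scal\varphi{C_j}$ and $S_{V,n}1=\sum_{j=1}^{k_n}|C_j|$, so
\begin{gather*}
M_{V,n}\varphi=\frac{\sum_{j=1}^{k_n}\scal\varphi{C_j}}{\sum_{j=1}^{k_n}|C_j|}
=\frac{\frac1{k_n}\sum_{j=1}^{k_n}\scal\varphi{C_j}}{\frac1{k_n}\sum_{j=1}^{k_n}|C_j|}\,.
\end{gather*}
The chain $(C_j)_{j\ge1}$ is a time-homogeneous, irreducible, finite-state, hence ergodic (positive recurrent), Markov chain by \S\ref{sec:bernoulli-measures} and Proposition~\ref{prop:5}, with unique invariant distribution~$\pi$ on~$\Cstar$. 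By the ergodic theorem for Markov chains applied along the full index $j$ (and since $k_n\to\infty$ \pas, we may evaluate the Cesàro averages along the subsequence $k_n$), both numerator and denominator converge \pas: $\frac1{k_n}\sum_{j\le k_n}\scal\varphi{C_j}\to\sum_{\gamma\in\Cstar}\pi(\gamma)\scal\varphi\gamma$ and $\frac1{k_n}\sum_{j\le k_n}|C_j|\to\sum_{\gamma\in\Cstar}\pi(\gamma)|\gamma|$. The denominator limit is strictly positive because every clique has $|\gamma|\ge1$ and $\pi$ is a probability measure; dividing gives exactly the right-hand side of~\eqref{eq:11}. This shows $M_{V,n}\varphi\to M\varphi$ \pas\ with $M\varphi$ as in~\eqref{eq:11} for this particular~$V$.

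Finally, Theorem~\ref{thr:2} asserts that the \pas-limit of $M_{V,n}\varphi$ is the same constant for every exhaustive \ast\ with $\esp|V|<\infty$; since we have computed that constant for one such~$V$, formula~\eqref{eq:11} holds for all of them. The main obstacle is the book-keeping in the first step: one must be careful that $k_n\to\infty$ \pas\ (so that the Cesàro averages indexed by $k_n$ genuinely converge to the ergodic averages), and that the identification $\xi_{V_n}=C_1\cdots C_{k_n}$ is legitimate — both of which are supplied by the proof of Proposition~\ref{prop:5}. Everything else is a routine application of the Markov chain ergodic theorem together with the already-proved cut-invariance.
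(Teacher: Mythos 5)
Your proposal is correct and follows essentially the same route as the paper: choose the maximal-clique \ast\ of point~\ref{item:8} of Proposition~\ref{prop:2}, identify $\xi_{V_n}$ with a Cartier--Foata prefix $C_1\cdot\ldots\cdot C_{K_n}$ with $K_n\to\infty$ \pas, rewrite the ergodic means as a ratio of Ces\`aro averages of the ergodic Markov chain of cliques, apply the Markov-chain law of large numbers, and conclude by the cut-invariance of Theorem~\ref{thr:2}. Your write-up is in fact a little more careful than the paper's about justifying the identification and the positivity of the denominator.
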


\begin{proof}
  According to Theorem~\ref{thr:2}, to compute the value $M\varphi$,
  we may choose any \ast\ of finite length in average. According to
  Proposition~\ref{prop:2}, the \ast\ $V:\BM\to\Mbar$ defined in
  point~\ref{item:8} of Proposition~\ref{prop:2} is eligible. But then
  the ergodic means are given by:
  \begin{align*}
    M_{V,n}\varphi&=\frac{K_n}{|C_1|+\ldots+|C_{K_n}|}\cdot \frac{\varphi(C_1)+\ldots+\varphi(C_{K_n})}{K_n}
  \end{align*}
  where $(C_k)_{k\geq1}$ is the Markov chain of cliques associated
  with infinite heaps, and for some integers $K_n$ such that
  $\lim_{n\to\infty} K_n=\infty$\,. The equality~(\ref{eq:11}) follows
  then from the Law of large numbers \cite{chung60} for the ergodic Markov
  chain~$(C_k)_{k\geq1}$\,.
\end{proof}

\subsection{Direct Computation for the Introductory Probabilistic Protocol}
\label{sec:direct-comp-intr}

We have computed in \S~\ref{sec:cut-invariance-an} the asymptotic
density of pieces for the heap monoid $\T=\langle a,b,c\;|\;
ab=ba\rangle$ by computing ergodic means associated either with the
first hitting time of $c$ or with the first hitting time of~$a$. The
fact that the results coincide can be seen as an instance of
Theorem~\ref{thr:2}. Corollary~\ref{cor:1} provides a direct way of
computing the limit density vector, without having to describe an
infinite set of heaps as we did in~\S~\ref{sec:cut-invariance-an},
which would become much less tractable for a general heap monoid. Let
us check that we recover the same values for the density vector
$\gamma=\begin{pmatrix}\gamma_a&\gamma_b&\gamma_c
\end{pmatrix}$\,.

We have already obtained in~(\ref{eq:27}) the values
of the characteristic numbers of the associated Bernoulli measure:
$f(a)=1-\lambda$, $f(b)=1-\lambda'$, $f(c)=\lambda\lambda'$. Let us
use the short notations $a,b,c$ for $f(a), f(b), f(c)$. The M\"obius
transform is then the following vector, indexed by cliques $a,b,c,ab$
in this order:
\begin{gather*}
  h=
  \begin{pmatrix}
    a(1-b)&b(1-a)&c&ab
  \end{pmatrix}
\end{gather*}

Using the equality $1-a-b-c+ab=0$, and according to the results
recalled in~\S~\ref{sec:bernoulli-measures}, the transition matrix of
the chain of cliques is given by:
\begin{gather*}
  P=
  \begin{pmatrix}
a&   0&\frac c{1-b}&0\\
0&b&\frac c{1-a}&0\\
a(1-b)&b(1-a)&c&ab\\
a(1-b)&b(1-a)&c&ab
  \end{pmatrix}=
  \begin{pmatrix}
1-\lambda&0&\lambda&0\\
0&1-\lambda'&\lambda'&0\\
\lambda'(1-\lambda)&\lambda(1-\lambda')&\lambda\lambda'&(1-\lambda)(1-\lambda')\\    
\lambda'(1-\lambda)&\lambda(1-\lambda')&\lambda\lambda'&(1-\lambda)(1-\lambda')
  \end{pmatrix}
\end{gather*}

Direct computations give the left invariant probability
vector~$\pi$ of~$P$:
\begin{gather*}
  \begin{pmatrix}
    \pi_a\\\pi_b\\\pi_c\\\pi_{ab}
  \end{pmatrix}
=\frac1{\lambda^2+\lambda'^2-\lambda\lambda'(\lambda+\lambda'-1)}
  \begin{pmatrix}
    (1-\lambda)\lambda'^2\\(1-\lambda')\lambda^2\\
\lambda\lambda'(\lambda+\lambda'-\lambda\lambda')\\
 \lambda\lambda'(1-\lambda)(1-\lambda')
  \end{pmatrix}
\end{gather*}

Using the notion of limit for ergodic means, the density vector
defined in~\S~\ref{sec:cut-invariance-an} is $\gamma=
\begin{pmatrix} \gamma_a&\gamma_b&\gamma_c
\end{pmatrix}=
\begin{pmatrix}M\un_{\{a\}}&M\un_{\{b\}}&M\un_{\{c\}}
\end{pmatrix}$\,, which yields, according to the result of
Corollary~\ref{cor:1}:
\begin{align*}
  \begin{pmatrix}
    \gamma_a\\\gamma_b\\\gamma_c
  \end{pmatrix}=\frac1{\pi_a+\pi_b+\pi_c+2\pi_{ab}}
\begin{pmatrix}
\pi_a+\pi_{ab}
\\\pi_b+\pi_{ab}\\
\pi_c
\end{pmatrix}=\frac1{\lambda+\lambda'-\lambda\lambda'}
\begin{pmatrix}
\lambda'(1-\lambda)\\
\lambda(1-\lambda')\\
\lambda\lambda'
\end{pmatrix}
\end{align*}
As expected, we recover the values found
in~\S~\ref{sec:cut-invariance-an}.

\subsection{Proof of Theorem~\ref{thr:2}}
\label{sec:proof-theorem}

The proof is divided into two parts, each one gathered in a
subsection: first, the proof of convergence of the ergodic
means~(\S~\ref{sec:proof-convergence}); and second, the proof that the
limit does not depend on the choice of the \ast\ $V:\BM\to\Mbar$,
provided that $\esp|V|<\infty$ holds~(\S~\ref{sec:uniqueness-limit}).

\subsubsection{Convergence of Ergodic Means}
\label{sec:proof-convergence}

Using the notations introduced in Theorem~\ref{thr:2}, let
$(\Delta_n)_{n\geq1}$ be the sequence of increments associated with
the sequence~$(V_n)_{n\geq1}$\,. The increments are defined as in
Proposition~\ref{prop:3}. Then we have:
\begin{align*}
  M_{V,n}\varphi&=\frac{\scal\varphi{\Delta_1\cdot\ldots\cdot\Delta_n}}{\scal1{\Delta_1\cdot\ldots\cdot\Delta_n}}=\frac
  n{\scal1{\Delta_1}+\ldots+\scal1{\Delta_n}}\cdot
  \frac{\scal\varphi{\Delta_1}+\ldots+\scal\varphi{\Delta_n}}n
\end{align*}

Let $M=\max|\varphi|$\,. Then the assumption
$\esp|\xi_V|<\infty$ implies:
\begin{gather*}
  \esp|\scal\varphi{\xi_V}|\leq M\esp|\xi_V|<\infty\,.
\end{gather*}

Since $(\Delta_n)_{n\geq1}$ is \iid\ according to
Proposition~\ref{prop:3}, each $\Delta_i$ being distributed according
to~$\xi_V$, the Strong law of large numbers for \iid\ sequences
implies the \pas\ convergence:
\begin{align}
\label{eq:34}
\lim_{n\to\infty}\frac{\scal\varphi{\Delta_1}+\ldots+\scal\varphi{\Delta_n}}n
&=\esp\scal\varphi{\xi_V}\,,
&\lim_{n\to\infty}\frac{\scal1{\Delta_1}+\ldots+\scal1{\Delta_n}}n
=\esp|\xi_V|\,.
\end{align}

It follows in particular from $V$ being exhaustive that
\mbox{$\esp|\xi_V|>0$}; otherwise, we would have $\xi_V=0$, \pas, and
thus $\xi_{V_n}=0$, \pas\ and for all $n\geq0$, contradicting the
\pas\ equality $\bigvee_{n\geq0}\xi_{V_n}=\xi$ stated in
Lemma~\ref{lem:1}. Hence, from~(\ref{eq:34}), we deduce the \pas\
convergence:
\begin{gather*}
  \lim_{n\to\infty}M_{V,n}\varphi=\frac{\esp\scal\varphi{\xi_V}}{\esp|\xi_V|}\,.
\end{gather*}

\subsubsection{Uniqueness of the Limit}
\label{sec:uniqueness-limit}

We start with a couple of lemmas. 

\begin{lemma}
  \label{lem:4}
Let $f:\M\to\bbR$ be the valuation defined by $f(x)=\pr(\up x)$ for
all $x\in\M$, and let $B=(B_{\gamma,\gamma'})_{(\gamma,\gamma')\in\Cstar\times\Cstar}$ be
the non-negative matrix defined by:
\begin{gather*}
  \forall (\gamma,\gamma')\in\Cstar\times\Cstar\qquad B_{\gamma,\gamma'}=
  \begin{cases}
    0,&\text{if $\neg(\gamma\to \gamma')$\,,}\\
f(\gamma'),&\text{if $\gamma\to \gamma'$\,.}
  \end{cases}
\end{gather*}
Then $B$ has spectral radius\/~$1$.
\end{lemma}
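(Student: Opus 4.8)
The matrix $B$ is, up to a change of presentation, exactly the "unnormalized" version of the transition matrix $P$ of the Markov chain of cliques: comparing with~\eqref{eq:21}, we have $P_{\gamma,\gamma'}=h(\gamma')/g(\gamma)$ whenever $\gamma\to\gamma'$, whereas $B_{\gamma,\gamma'}=f(\gamma')$ on the same support. So $B$ and $P$ have the same zero/nonzero pattern, hence $B$ is irreducible (the chain of cliques is ergodic, as recalled in~\S\ref{sec:bernoulli-measures}), and the Perron--Frobenius theorem applies: $B$ has a unique dominant eigenvalue equal to its spectral radius $\rho$, with a strictly positive left eigenvector. The plan is to exhibit an explicit positive left eigenvector of $B$ for the eigenvalue $1$, which by Perron--Frobenius forces $\rho=1$.

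The key computation is the following. For a non-empty clique $\gamma\in\Cstar$, consider the row vector with entries $f(\gamma)$, and compute $\sum_{\gamma\in\Cstar} f(\gamma)\,B_{\gamma,\gamma'}=\sum_{\gamma\,:\,\gamma\to\gamma'} f(\gamma)f(\gamma')=f(\gamma')\sum_{\gamma\,:\,\gamma\to\gamma'}f(\gamma)$. This is not quite $f(\gamma')$, so the naive guess $f$ is not the eigenvector; instead I would look for a left eigenvector of the form $v_\gamma=h(\gamma)$ or a suitable combination. The natural candidate is $v_\gamma=g(\gamma)$, with $g$ defined in~\eqref{eq:30}, or else one reverses the roles: since $P$ is a stochastic matrix with stationary distribution $\pi$, and $P_{\gamma,\gamma'}=h(\gamma')/g(\gamma)$, one has $B_{\gamma,\gamma'}=f(\gamma')$ while $P_{\gamma,\gamma'}g(\gamma)=h(\gamma')$. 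The clean route is to relate $B$ to $P$ by a diagonal conjugation: writing $D=\mathrm{diag}(g(\gamma))_{\gamma\in\Cstar}$ (all entries positive by~\eqref{eq:18}$(b)$ and the definition of $g$), one checks whether $D^{-1}BD$ or $D B D^{-1}$ is proportional to $P$. If $B=D' P (D')^{-1}$ for a positive diagonal $D'$, then $B$ and $P$ are similar, hence have the same spectrum, and in particular the same spectral radius; since $P$ is stochastic and irreducible its spectral radius is $1$, and we are done.

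Concretely, I expect the relation $f(\gamma')=(h(\gamma')/g(\gamma))\cdot(g(\gamma)f(\gamma')/h(\gamma'))$ to telescope correctly only after the right normalization, so the actual lemma used should be a Möbius-inversion identity from~\cite{abbesmair14}: the defining property of $h$ in~\eqref{eq:16} gives, for every clique $\gamma$, a relation of the form $f(\gamma)=\sum_{\gamma'\geq\gamma}h(\gamma')$ (Möbius inversion of~\eqref{eq:16}), and summing $h$ over cliques $\gamma'$ with $\gamma\to\gamma'$ produces $g(\gamma)$. Combining these, one verifies that the vector $u$ with $u_\gamma=g(\gamma)$ (equivalently, a vector built from $h$) satisfies $uB=u$ componentwise, using the identity $\sum_{\gamma\,:\,\gamma\to\gamma'}g(\gamma)^{-1}\cdot(\dots)$ — i.e. I would transport the stationarity equation $\pi P=\pi$ through the diagonal conjugation. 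The main obstacle is purely bookkeeping: getting the direction of the conjugation right and correctly invoking the Möbius identities of~\cite{abbesmair14} (in particular that $\sum_{\gamma\to\gamma'}h(\gamma')=g(\gamma)$ and $\sum_{\gamma'\geq\gamma}h(\gamma')=f(\gamma)$) so that the eigenvector equation closes; once $B$ is shown similar to the stochastic irreducible matrix $P$, the conclusion $\rho(B)=\rho(P)=1$ is immediate from Perron--Frobenius.
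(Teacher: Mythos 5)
Your route is sound and genuinely different from the paper's, but as written it stops just short of the one identity that makes it close. The conjugation you are looking for is $B=DPD^{-1}$ with $D=\mathrm{diag}\bigl(g(\gamma)\bigr)_{\gamma\in\Cstar}$, and verifying it amounts to checking $g(\gamma)\,P_{\gamma,\gamma'}\,g(\gamma')^{-1}=h(\gamma')/g(\gamma')=f(\gamma')$ on the support of $\to$; the decisive ingredient is therefore the identity $h(\gamma)=f(\gamma)\,g(\gamma)$ for all $\gamma\in\Cstar$, which is precisely \cite[Prop.~10.3]{abbesmair14} and is what the paper itself invokes. The two identities you name instead --- $\sum_{\gamma'\,:\,\gamma\to\gamma'}h(\gamma')=g(\gamma)$ (which is just the definition~\eqref{eq:30}) and the M\"obius inversion $\sum_{\gamma'\geq\gamma}h(\gamma')=f(\gamma)$ --- do not by themselves yield $h=fg$, so the ``bookkeeping'' you defer is exactly where the nontrivial input sits. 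Once $h=fg$ is granted, your argument is complete and does not even need irreducibility: $P$ is row-stochastic by construction ($\sum_{\gamma'\,:\,\gamma\to\gamma'}h(\gamma')/g(\gamma)=1$, with nonnegative entries by~\eqref{eq:18}$(b)$), so $\rho(P)=1$, and similarity transfers this to~$B$.

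It is worth comparing with the paper's proof, which splits the statement in two. For $\|B\|\geq1$ it uses the same identity $h=fg$ to exhibit $g$ as a positive right eigenvector, $Bg=g$ --- equivalent content to your conjugation. But for $\|B\|\leq1$ it does \emph{not} pass through the stochasticity of~$P$: it introduces the damped valuation $f_\varepsilon(x)=\varepsilon^{|x|}f(x)$, shows via the Hat Lemma that $\sum_{x\in\M}f_\varepsilon(x)<\infty$, deduces $\|B_\varepsilon\|<1$ from primitivity, and lets $\varepsilon\to1^-$. Your similarity argument obtains both bounds at once and is shorter; the price is that it leans entirely on the external identity $h=fg$ and on the fact (also from~\cite{abbesmair14}) that $P$ is the transition matrix of an actual Markov chain, whereas the paper's upper bound is a self-contained summability estimate. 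Either way, make the appeal to $h(\gamma)=f(\gamma)g(\gamma)$ explicit; without it the eigenvector equation does not telescope.
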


\begin{proof}
  Let $\|\cdot\|$ denote the spectral radius of a non-negative matrix,
  that is to say, the greatest modulus of its eigenvalues. 

  We observe first that $B$ is a primitive non negative matrix
  (see~\cite{seneta81}). Indeed, it is irreducible since the graph of non empty
  cliques $(\Cstar,\to)$ is strongly connected on the one hand,
  according to~\cite[Lemma~3.2]{krob03}, and since $f$ is positive on
  the other hand.  And it is aperiodic since $c\to c$ holds for any
  clique $c\in\Cstar$.

  Let $h:\C\to\bbR$ be the M\"obius transform defined in~(13), and let
  $g=(g(\gamma))_{\gamma\in\Cstar}$ be the normalization vector
  defined in~(\ref{eq:30}).  The following identity is proved in
  \cite[Prop.~10.3]{abbesmair14} to hold for all $\gamma\in\Cstar$:
  $h(\gamma)=f(\gamma)g(\gamma)$\,.  It implies:
\begin{align*}
  (Bg)_\gamma&=\sum_{\gamma'\in\Cstar\tq\gamma\to\gamma'}f(\gamma')g({\gamma'})
=\sum_{\gamma'\in\Cstar\tq\gamma\to\gamma'}h(\gamma')=g(\gamma)\,.
\end{align*}

Hence $g$ is $B$-invariant on the right. Since $h>0$ on~$\Cstar$, the
vector $g$ is positive, and thus $\|B\|\geq1$\,.

We now prove the converse inequality.  For every
$\varepsilon\in(0,1)$, we consider the valuation
$f_\varepsilon:\M\to\bbR$ defined by
$f_\varepsilon(x)=\varepsilon^{|x|}f(x)$ and the matrix
$B_\varepsilon=((B_\varepsilon)_{\gamma,\gamma'})_{(\gamma,\gamma')\in\Cstar\times\Cstar}$
defined by:
\begin{gather*}
  (B_\varepsilon)_{\gamma,\gamma'}=\un_{\{\gamma\to\gamma'\}}f_\varepsilon(\gamma')\,,
\end{gather*}
and we claim that holds: $ \|B_\varepsilon\|<1$.

For proving this, let $I=(I_\gamma)_{\gamma\in\Cstar}$ and
$F_\varepsilon=((F_\varepsilon)_\gamma)_{\gamma\in\Cstar}$ be the
vectors defined by $I_\gamma=1$ and
$(F_\varepsilon)_\gamma=f_\varepsilon(\gamma)$ for all
$\gamma\in\Cstar$\,.  Denoting by $I'$ the transpose of~$I$, and
recalling that $\height(\cdot)$ denotes the height of heaps, we have:
\begin{align}
\notag
\forall k\geq0\quad  I'B_\varepsilon^k F_\varepsilon&=\sum_{\substack{x\in\M\;:\\\height(x)=k+1}}f_\varepsilon(x)\,,\\
\label{eq:41}
I'\Bigl(\sum_{k\geq0}B_\varepsilon^k\Bigr)F_\varepsilon&=\sum_{x\in\M\setminus\{0\}}f_\varepsilon(x)\,.
\end{align}

Let us show that the right member in~\eqref{eq:41} is a convergent
series. Let $w\in\M$ be a heap with the following property:
\begin{gather}
\label{eq:42}
  \forall x,y\in\M\quad|x|=|y|\wedge x\neq y\implies \up(x\cdot w)\,\cap\up(y\cdot w)=\emptyset.
\end{gather}

Such a heap exists according to the Hat Lemma
\cite[Lemma~11.2]{abbesmair14}. Since $\pr$ is a probability measure,
we have, for every integer $n\geq0$:
\begin{align*}
  \pr\Bigl(\bigcup_{x\in\M\tq|x|=n}\up(x\cdot w)\Bigr)&\leq1\\
\sum_{x\in\M\tq|x|=n}\pr(\up (x\cdot
w))&\leq1&&\text{by~(\ref{eq:42})}\\
f(w)\Bigr(\sum_{x\in\M\tq|x|=n}f(x)\Bigr)&\leq 1&&\text{since $f$ is multiplicative}
\end{align*}

It follows that there exists a constant $M>0$ such that:
\begin{gather*}
  \forall n\geq0\quad\sum_{x\in\M\tq|x|=n}f(x)\leq M<\infty
\end{gather*}
It implies:
\begin{gather*}
  \sum_{x\in\M}f_\varepsilon(x)=\sum_{n\geq0}\varepsilon^n\Bigl(\sum_{x\in\M\tq|x|=n}f(x)\Bigr)
\leq M \Bigl(\sum_{n\geq0}\varepsilon^n\Bigr)<\infty\,,
\end{gather*}
showing that the series in the right member of~(\ref{eq:41}) is
convergent. 

Now the same argument as for $B$ applies: $B_\varepsilon$~is
primitive. Henceforth, from~(\ref{eq:41}), we deduce that
$\|B_\varepsilon\|<1$, as claimed. 

Passing to the limit when $\varepsilon\to 1^-$\,, we obtain
$\|B\|\leq1$. This concludes the proof of the lemma.
\end{proof}

\begin{lemma}
  \label{lem:5}
Let $a\in\Sigma$ be a piece, and let $\M'_a$ be the sub-monoid of
$\M$ consisting of heaps with no occurrence of~$a$. Then:
\begin{align*}
  \sum_{x\in\M'_a}\pr(\up x)&<\infty\,,&
  \sum_{x\in\M'_a}|x|\,\pr(\up x)&<\infty\,,
&  \sum_{x\in\M'_a}|x|^2\,\pr(\up x)&<\infty\,.
\end{align*}
\end{lemma}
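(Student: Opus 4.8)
The plan is to exploit the fact that restricting from $\M$ to the sub-monoid $\M'_a$ strictly lowers the relevant spectral radius below~$1$, which turns all three sums into geometrically convergent series; the matrix to be restricted is the matrix $B$ of Lemma~\ref{lem:4}.

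Concretely, write $\Cstar_a=\{\gamma\in\Cstar\tq a\notin\gamma\}$ for the set of non-empty cliques not containing the piece~$a$. This is a \emph{proper}, non-empty subset of~$\Cstar$: proper because $\{a\}\in\Cstar\setminus\Cstar_a$, and non-empty because the hypothesis $|\Sigma|>1$ supplies a piece $b\neq a$ with $\{b\}\in\Cstar_a$. A non-zero heap $x$ lies in $\M'_a$ if and only if every clique of its Cartier-Foata decomposition avoids~$a$; hence $\M'_a\setminus\{0\}$ is in bijection with the Cartier-Foata admissible sequences $(\gamma_1,\dots,\gamma_k)$, $k\geq1$, of cliques in $\Cstar_a$, and for such an~$x$ one has $\pr(\up x)=f(x)=f(\gamma_1)\cdots f(\gamma_k)$, $\height(x)=k$, and $|x|\leq\alpha\,\height(x)$, where $\alpha$ is the maximal size of a clique. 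Letting $B^{a}$ denote the principal submatrix of~$B$ indexed by $\Cstar_a\times\Cstar_a$, a routine induction on~$k$ then gives, for all $k\geq1$,
\begin{gather*}
\sum_{\substack{x\in\M'_a\\ \height(x)=k}}\pr(\up x)=F'(B^{a})^{k-1}\mathbf{1}\,,
\end{gather*}
where $F'$ is the row vector $\bigl(f(\gamma)\bigr)_{\gamma\in\Cstar_a}$ and $\mathbf{1}$ the all-ones column vector; in particular this quantity is at most a fixed constant times $\|(B^{a})^{k-1}\|$ for any submultiplicative matrix norm.

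The key step is to show that the spectral radius $\rho(B^{a})$ is strictly less than~$1$. By the proof of Lemma~\ref{lem:4}, $B$ is an irreducible non-negative matrix, and by Lemma~\ref{lem:4} its spectral radius equals~$1$. Padding $B^{a}$ with zero rows and columns to the full index set~$\Cstar$ yields a non-negative matrix $\widehat{B}$ with $\widehat{B}\leq B$ and $\widehat{B}\neq B$ (for instance the diagonal entry of~$B$ at the clique $\{a\}$, equal to $f(a)=p_a>0$ because $\{a\}\to\{a\}$, has been deleted). The comparison theorem for the spectral radius of a non-negative matrix dominated by an irreducible one~\cite{seneta81} then gives $\rho(\widehat{B})<\rho(B)=1$, and since $\rho(\widehat{B})=\rho(B^{a})$ we conclude $\rho(B^{a})<1$; note that no primitivity of $B^{a}$ is needed.

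It then remains to sum over~$k$. Since $\rho(B^{a})<1$, one may fix a matrix norm with $\|B^{a}\|<1$, so $\|(B^{a})^{k-1}\|$ decays geometrically; combined with the displayed identity this yields constants $C>0$ and $\theta\in(0,1)$ with $\sum_{x\in\M'_a,\ \height(x)=k}\pr(\up x)\leq C\theta^{k-1}$ for every $k\geq1$. Using $\pr(\up 0)=1$, $|x|\leq\alpha\,\height(x)$ and $|x|^{2}\leq\alpha^{2}\,\height(x)^{2}$, the three sums in the statement are bounded above respectively by $1+C\sum_{k\geq1}\theta^{k-1}$, $\alpha C\sum_{k\geq1}k\,\theta^{k-1}$ and $\alpha^{2}C\sum_{k\geq1}k^{2}\,\theta^{k-1}$, all of which are finite because $\theta<1$. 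The main obstacle is the spectral-radius step: one must check carefully that the comparison theorem for non-negative matrices applies here --- an irreducible dominating matrix, strict inequality in at least one entry --- while the remaining estimates are routine; a secondary point is to confirm the bijective description of $\M'_a$ by admissible clique sequences inside~$\Cstar_a$.
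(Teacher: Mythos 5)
Your proof is correct and follows essentially the same route as the paper: the paper's $B_a$ (obtained by zeroing all entries involving a clique containing~$a$) is exactly your padded matrix $\widehat{B}$, and the paper likewise concludes $\|B_a\|<1$ from $B_a\leq B$, $B_a\neq B$ and the Perron--Frobenius comparison with the irreducible matrix $B$ of Lemma~\ref{lem:4}. The only difference is that the paper ends with ``the result follows,'' whereas you supply the omitted details (the Cartier--Foata bijection with admissible clique sequences in $\Cstar_a$, the identity $\sum_{\height(x)=k}f(x)=F'(B^a)^{k-1}\mathbf{1}$, and the geometric summation with the bound $|x|\leq\alpha\,\height(x)$), all of which are correct.
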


\begin{proof}
  We still denote as above by $\|\cdot\|$ the spectral radius of a
  non-negative matrix. Let $B$ the matrix defined as in
  Lemma~\ref{lem:4}, and let $B_a$ be the matrix obtained by replacing
  in $B$ all entries $(\gamma,\gamma')$ by $0$ as long as $\gamma$ or
  $\gamma'$ contains an occurrence of~$a$. Then the non-negative
  matrices $B$ and $B_a$ satisfy $B_a\leq B$ and $B_a\neq B$. Since
  $B$ is primitive, and since $\|B\|=1$ by Lemma~\ref{lem:4}, it
  follows from Perron-Frobenius theory~\cite[Chapter~1]{seneta81} that
  $\|B_a\|<1$. The result follows.
\end{proof}

\begin{lemma}
  \label{lem:6}
  Let $a\in\Sigma$ be a piece. Let $V$ be the first hitting time
  of~$a$, and let $(V_k)_{k\geq0}$ be the associated sequence of
  iterated stopping times. Fix $x\neq0$ a heap, and let
  $J_x:\BM\to\bbN\cup\{\infty\}$ be the random variable defined by:
\begin{gather*}
  J_x(\xi)=\inf\{k\geq0\tq V_k(\xi)\geq x\}\,.
\end{gather*}
Then the random variable $U_x:\BM\to\Mbar$ defined by:
\begin{align*}
  U_x(\xi)&=
  \begin{cases}
    V_{J_x(\xi)}(\xi)\,,&\text{if $J_x(\xi)<\infty$\,,}\\
\xi\,,&\text{if $J_x(\xi)=\infty$}\,,
  \end{cases}
\end{align*}
is an\/ \ast, and there exists a constant $C\geq0$, independent
of~$x$, such that:
\begin{gather}
\label{eq:36}
\esp\bigl(\,(|U_x|-|x|)^2\;\big|\up x\bigr)\leq C\,.
\end{gather}
\end{lemma}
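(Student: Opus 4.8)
We first check that $U_x$ is an\/ \ast. The inequality $U_x(\xi)\le\xi$ holds since $U_x(\xi)$ is either~$\xi$ or one of the heaps $V_k(\xi)\le\xi$. For the second axiom of Definition~\ref{def:3}, assume $u:=U_x(\xi)=V_{J_x(\xi)}(\xi)$ is a finite heap with $u\le\xi'$. For each $j\le J_x(\xi)$ we have $V_j(\xi)\le V_{J_x(\xi)}(\xi)=u\le\xi'$ with $V_j(\xi)\in\M$; as $V_j$ is an\/ \ast\ (Proposition~\ref{prop:4}), this forces $V_j(\xi')=V_j(\xi)$, hence $J_x(\xi')=J_x(\xi)$ and $U_x(\xi')=u$. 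Finally, $V$ being exhaustive (Proposition~\ref{prop:5}), for \pas\ every $\xi\in\up x$ one has $\bigvee_{k\ge0}V_k(\xi)=\xi\ge x$, so by the finiteness property of elements of~$\M$ (point~\ref{item:12} of Proposition~\ref{prop:1}) some $V_k(\xi)\ge x$, i.e.\ $J_x(\xi)<\infty$; thus $U_x$ is \pas\ finite on~$\up x$.

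A short induction on~$k$ (the case $k=1$ being the first hitting time itself) shows that $V_k(\xi)$ is the principal sub-heap of~$\xi$ generated by the $k^{\text{th}}$ occurrence of~$a$; in particular it has a greatest element, that occurrence, and $|V_k(\xi)|_a=k$. Hence the set~$\U_x$ of finite values of~$U_x$ consists of the heaps $u\ge x$ possessing a greatest element which is an occurrence of~$a$ and such that the sub-heap $u^-$ of~$u$ generated by the next-to-last occurrence of~$a$ in~$u$ (with $u^-=0$ if $|u|_a=1$) does \emph{not} dominate~$x$; and $\{U_x=u\}=\up u$ for each such~$u$, these cylinders partitioning~$\up x$ up to a $\pr$-null set. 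Writing $f(y)=\pr(\up y)$, which is multiplicative, and $u=x\cdot w$ whenever $u\ge x$, we obtain
\begin{gather}
\label{eq:U2}
\esp\bigl((|U_x|-|x|)^2\,\big|\,\up x\bigr)=\sum_{w\,:\,x\cdot w\in\U_x}|w|^2\,f(w)\,,\qquad\sum_{w\,:\,x\cdot w\in\U_x}f(w)=1\,;
\end{gather}
so \eqref{eq:U2} is a finite weighted mean, which must be bounded uniformly in~$x$.

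Write $u=u^-\cdot\Delta$. Then $\Delta$ is itself a value of the first hitting time~$V$, so $\Delta=z\cdot a$ with $z\in\M'_a$. Since $u^-,x\le U_x(\xi)$, the join $u^-\vee x$ exists; from $|u^-|\le|u^-\vee x|$ and $|u^-\vee x|=|x|+\bigl|(u^-\vee x)-x\bigr|$ (monotonicity and additivity of the length) we get
\begin{gather*}
|U_x|-|x|=|u^-|+|\Delta|-|x|\le|\Delta|+\bigl|(u^-\vee x)-x\bigr|\,.
\end{gather*}
It thus suffices to bound uniformly in~$x$ the conditional expectations given~$\up x$ of $|\Delta_{J_x}|^2$ and of $\bigl|(V_{J_x-1}(\xi)\vee x)-x\bigr|^2$, where $\Delta_{J_x}=U_x(\xi)-V_{J_x-1}(\xi)$ is the last increment. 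For the first: decomposing $\up x=\bigsqcup_{n\ge0}\{J_x=n+1\}$ and applying the Strong Bernoulli property (Theorem~\ref{thr:1}) to the \ast\ $V_n$, together with the \iid\ increments (Proposition~\ref{prop:3}), one finds
\begin{gather*}
\esp\bigl(|\Delta_{J_x}|^2\,\big|\,\up x\bigr)=\frac1{f(x)}\sum_{u^-}f(u^-)\,\esp\bigl(|V|^2\,\un_{\{u^-\cdot V\ge x\}}\bigr)\,,
\end{gather*}
the sum over heaps $u^-$ with a greatest element an occurrence of~$a$, or $u^-=0$, satisfying $u^-\not\ge x$; here $\esp|V|^2<\infty$ by Lemma~\ref{lem:5}, every value of~$V$ being of the form $z\cdot a$ with $z\in\M'_a$. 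For the second, one splits according to whether $V_m(\xi)\ge x$ ($m=|x|_a$), equivalently whether $J_x=m$: in that case $(V_{J_x-1}(\xi)\vee x)-x\in\M'_a$ and the bound $\sum_{w\in\M'_a}|w|^2f(w)<\infty$ of Lemma~\ref{lem:5} applies at once; otherwise, conditioning on the \ast\ $V_m$ and using the Strong Bernoulli property reduces the estimate to a similar one for the sub-heap $\rho\in\M'_a$ of~$x$ lying strictly above its last occurrence of~$a$ (so $x=x_0\cdot\rho$, with $x_0$ generated by that occurrence inside~$x$) --- hence finally to the case of a target in~$\M'_a$, handled directly from the three summability bounds of Lemma~\ref{lem:5}.

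The delicate step is the combinatorial verification underlying these reductions: one must track which residual heaps $x-(x\wedge u^-)$ and overshoot heaps $(u^-\vee x)-x$ actually occur along the iteration, and check that the attendant $f$-weighted sums of squared lengths stay bounded uniformly in~$x$, using \emph{all three} summability statements of Lemma~\ref{lem:5} --- the zeroth- and first-order ones to absorb the cross-terms produced when the penultimate overshoot and the last increment are recombined through $(s+t)^2\le 2s^2+2t^2$ --- and applying the Strong Bernoulli property at the \emph{random} heaps $V_m(\xi)$, $V_n(\xi)$ in order to replace conditional laws by genuine independent copies of~$V$. I expect this bookkeeping, rather than any single inequality, to be the main content of the proof; once it is in place, \eqref{eq:U2} yields the stated uniform bound~\eqref{eq:36}.
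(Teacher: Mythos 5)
Your setup is sound and coincides with the paper's up to the identity
\begin{gather*}
\esp\bigl(\,(|U_x|-|x|)^2\;\big|\up x\bigr)=\sum_{y\in\U_x}|y-x|^2\,\pr\bigl(\up(y-x)\bigr)\,,
\end{gather*}
and your verification of the \ast\ axioms for $U_x$ is in fact more detailed than the paper's. But the uniform bound itself---the entire content of the lemma---is not established. You split $|U_x|-|x|\leq|\Delta_{J_x}|+\bigl|(V_{J_x-1}\vee x)-x\bigr|$ and then defer both resulting estimates to ``bookkeeping'' that you explicitly leave undone; and the one formula you display for $\esp(|\Delta_{J_x}|^2\,|\up x)$ does not follow from $\esp|V|^2<\infty$ alone. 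Discarding the indicator there is hopeless, since $\sum_{u^-}f(u^-)$ over the values of all iterates $V_k$ diverges ($\sum_k 1$); and keeping it, the conditioning $\{u^-\cdot V\geq x\}$ reweights the law of the last increment, so that the natural estimate one extracts is of order $|x|^2\,\esp(J_x\,|\up x)$, which is exactly \emph{not} uniform in~$x$. So the heart of the lemma is missing, not merely unpolished.

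The idea the paper uses instead is a combinatorial factorization of the overshoot. Assume first that $x$ is \emph{pyramidal}, i.e.\ has a unique maximal piece~$b$. All occurrences of $b$ in $\xi$ are linearly ordered, and $V_{J_x-1}(\xi)\not\geq x$; hence no occurrence of $b$ can appear in the portion of $y-x$ created before the final increment, for otherwise $V_{J_x-1}(\xi)$ would contain the top of $x$ and therefore all of~$x$. Consequently $y-x=v\cdot u\cdot a$ with $v\in\M'_b$ and $u\in\M'_a$, and the sum above is dominated by
\begin{gather*}
\sum_{u\in\M'_a,\;v\in\M'_b}(|u|+|v|+1)^2\,\pr(\up u)\,\pr(\up v)\,\pr(\up a)\,,
\end{gather*}
which, after expanding the square, factorizes into products of the three convergent series of Lemma~\ref{lem:5} and depends only on $a$ and $b$---both ranging over the finite set~$\Sigma$---so the bound is uniform in~$x$. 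The general case reduces to this one because every $x$ is the join of at most $\alpha$ pyramidal heaps, $\alpha$ being the maximal clique size. This single observation replaces all the conditioning on the random iterates $V_m$, $V_n$ that your sketch calls for, and is what you would need to supply to complete the proof.
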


\begin{proof}
  The fact that $U_x$ is an \ast\ is an easy consequence of the
  $V_k$'s being \ast\ (Proposition~\ref{prop:4}). Since $V$ is
  exhaustive by Proposition~\ref{prop:5}, in particular
  $J_x(\xi)<\infty$ for \pas\ every $\xi\in\up x$. Henceforth, the
  conditional expectation in~(\ref{eq:36}) is computed as the
  following sum:
  \begin{align*}
    \esp\bigl(\,(|U_x|-|x|)^2\;\big|\up x\bigr)&=\frac1{\pr(\up x)}\sum_{y\in
        \U_x}(|y|-|x|)^2\pr\bigl( \{U_x=y\}\,\cap\up x\bigr)\,,
  \end{align*}
  where $\U_x$ denotes the set of finite values assumed by~$U_x$\,.
  Since $U_x$ is an \ast, we have for all $y\in\U_x$\,:
\begin{gather*}
\{U_x=y\}=\up y\,,\\
\frac1{\pr(\up x)}\pr\bigl(\{U_x=y\}\,\cap\up x\bigr)
=\frac{\pr\bigl(\up(y\vee
  x)\bigr)}{\pr(\up x)}=\pr\bigl(\up(y-x)\bigr)\,,
\end{gather*}
the later equality since $x\leq y$ and by the multiplicativity
property of~$\pr$\,. Therefore:
\begin{align}
\label{eq:37}
  \esp\bigl(\,(|U_x|-|x|)^2\;\big|\up
  x\bigr)&=\sum_{y\in\U_x}|y-x|^2\pr\bigl(\up(y-x)\bigr)
\end{align}

Recall that each heap can be seen itself as a partially ordered
labelled set~\cite{viennot86}, where elements are labelled
by~$\Sigma$. Assume first that $x$ contains a unique maximal piece,
say $b\in\Sigma$. Such a heap is called \emph{pyramidal}.  Then for
each $y\in\U_x$\,, the heap $z=y-x$ has the following shape, for some
integer $k\geq0$\,:
$z=\delta_1\cdot\ldots\cdot\delta_{k-1}\cdot\delta_k$\,, where the
$\delta_i$'s for $i\in\{1,\ldots,k-1\}$ result from the action of the
hitting time $V$ prior to $V_k\geq x$. In particular, the $k-1$ first
heaps $\delta_i$ do not have any occurrence of~$b$\,; whereas
$\delta_k$ writes as $\delta_k=u\cdot a$ for some heap $u$ with no
occurrence of~$a$. Denoting by $\M'_a$ and $\M'_b$ respectively the
sub-monoids of $\M$ of heaps with no occurrence of $a$ and of~$b$, we
have thus $z=v\cdot u\cdot a$, for some $v\in\M'_b$ and
$u\in\M'_a$\,. Hence, from~(\ref{eq:37}), we deduce:
\begin{align*}
  \esp\bigl(\,(|U_x|-|x|)^2\;\big|\up x\bigr)&\leq
  \sum_{\substack{u\in\M'_a\\ v\in\M'_b}} (|u|+|v|+1)^2\pr(\up
  u)\cdot\pr(\up v)\cdot\pr(\up a)\,.
\end{align*}

Since $a$ and $b$ range over a finite set, it follows from
Lemma~\ref{lem:5} that the sum above in the right member is bounded by
a constant. The result~(\ref{eq:36}) follows .

We have proved the result if $x$ is pyramidal. The general case
follows since every heap $x$ writes as an upper bound
$x=x_1\vee\ldots\vee x_n$ of at most $\alpha$ pyramidal heaps, with
$\alpha$ the maximal size of cliques.
\end{proof}

\begin{lemma}
  \label{lem:7}
  Let $W$ be an\/ \ast\ such that\/ $\esp|W|<\infty$.  Let $a\in\Sigma$
  be a piece.  Let $V$ be the first hitting time of~$a$, and let\/
  $(V_k)_{k\geq0}$ be the associated sequence of iterated stopping
  times. Let $K:\BM\to\bbN\cup\{\infty\}$ be the random integer defined by:
  \begin{gather*}
K(\xi)=\inf\{k\geq0\tq V_k(\xi)\geq W(\xi)\}\,.
  \end{gather*}
Then the mapping $U:\BM\to\Mbar$ defined by:
\begin{gather*}
  U(\xi)=
  \begin{cases}
V_{K(\xi)}(\xi)\,,&\text{if $K(\xi)<\infty$,}\\
\xi\,,&\text{if $K(\xi)=\infty$}    
  \end{cases}
\end{gather*}
is an\/ \ast, and there is a constant $C\geq0$ such that:
\begin{gather*}
  \esp\bigl((|U|-|W|)^2\bigr)\leq C\,.
\end{gather*}
\end{lemma}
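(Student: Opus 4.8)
The plan is to split the statement into its two assertions — that $U$ is an \ast\ and that the overshoot $(|U|-|W|)^2$ has bounded expectation — and to handle them in that order, leaning heavily on Lemma~\ref{lem:6} for the quantitative part.

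First I would check that $U$ is an \ast. The inequality $U(\xi)\le\xi$ is immediate, since $V_k(\xi)\le\xi$ for every $k$ (the $V_k$ being \ast) and $U(\xi)=\xi$ when $K(\xi)=\infty$. For property~\ref{item:7} of Definition~\ref{def:3}, take $\xi,\xi'\in\BM$ with $U(\xi)\in\M$ and $U(\xi)\le\xi'$; then $K(\xi)=k<\infty$ and $U(\xi)=V_k(\xi)\in\M$. Since the iterated stopping times form a non-decreasing sequence (clear from their recursive definition), for every $j\le k$ we have $V_j(\xi)\le V_k(\xi)\le\xi'$ with $V_j(\xi)\in\M$, so the \ast\ property of each $V_j$ (Proposition~\ref{prop:4}) gives $V_j(\xi')=V_j(\xi)$. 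Moreover $W(\xi)\le V_k(\xi)$ forces $|W(\xi)|<\infty$, hence $W(\xi)\in\M$, and then $W(\xi)\le U(\xi)\le\xi'$ with $W$ an \ast\ gives $W(\xi')=W(\xi)$. Comparing the definitions of $K$ at $\xi$ and at $\xi'$ — the values $V_0(\cdot),\dots,V_k(\cdot)$ agree, and the target $W(\cdot)$ agrees — yields $K(\xi')=k$ and $U(\xi')=V_k(\xi')=U(\xi)$.

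For the variance bound I would condition on $W$ and reduce to Lemma~\ref{lem:6}. Note that $\esp|W|<\infty$ forces $W$ to be \pas\ finite. If $W\equiv 0$ then $U\equiv 0\equiv W$ and the bound holds with $C=0$, so I may assume $W\not\equiv 0$; by the remark following Definition~\ref{def:3}, every finite value $w$ of $W$ then satisfies $w\ne 0$, and the set $\W$ of such values is at most countable, with $\{W=w\}=\up w$ and the cylinders $\up w$ pairwise disjoint. On $\up w$ one has $W(\xi)=w$, hence $K(\xi)=\inf\{k\ge 0\tq V_k(\xi)\ge w\}=J_w(\xi)$ and therefore $U(\xi)=U_w(\xi)$, in the notation of Lemma~\ref{lem:6}; consequently $(|U|-|W|)^2=(|U_w|-|w|)^2$ on $\up w$. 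I would then write
\[
\esp\bigl((|U|-|W|)^2\bigr)=\sum_{w\in\W}\pr(\up w)\,\esp\bigl((|U_w|-|w|)^2\,\big|\up w\bigr)\le C\sum_{w\in\W}\pr(\up w)\le C,
\]
where $C$ is the constant of Lemma~\ref{lem:6} (independent of $w$), and the last step uses that the $\up w$ are pairwise disjoint so $\sum_{w\in\W}\pr(\up w)\le 1$.

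The genuine content has already been extracted in Lemma~\ref{lem:6} — the pyramidal decomposition of $x$ and the summability estimates coming from Lemmas~\ref{lem:4} and~\ref{lem:5}. So the only two points needing care here are, first, the verification that $U$ is an \ast\ when $W$ is merely an \ast\ (which amounts to checking that both the iterated times $V_j$ and the target $W$ ``freeze'' above the finite heap $U(\xi)$), and second, recognising that on each cylinder $\up w$ the random heap $U$ coincides with the specific \ast\ $U_w$ of Lemma~\ref{lem:6}, so that the uniform-in-$w$ estimate there can simply be summed against the probabilities $\pr(\up w)$.
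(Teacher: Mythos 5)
Your proof is correct and follows the same route as the paper: conditioning on the finite value $w$ of $W$, identifying $U$ with the \ast\ $U_w$ of Lemma~\ref{lem:6} on the cylinder $\up w$, and summing the uniform constant against the probabilities of the pairwise disjoint cylinders $\up w$. You are in fact slightly more careful than the paper, which omits both the explicit verification that $U$ is an \ast\ and the check that the finite values of $W$ are nonzero (needed to invoke Lemma~\ref{lem:6}, which assumes $x\neq0$).
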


\begin{proof}
  Let $\W$ denote the set of finite values assumed by~$W$. Since
  $\esp|W|<\infty$, in particular $W<\infty$ $\pr$-almost surely, and
  therefore:
  \begin{align*}
    \esp\bigl((|U|-|W|)^2\bigr)&=\sum_{w\in\W}\pr(\up w)\esp\bigl((|U|-|w|)^2\;\big|\up w\bigr)\\
&=\sum_{w\in\W}\pr(\up w)\esp\bigl((|U_w|-|w|)^2\;\big|\up w\bigr)&&\text{with the
  notation $U_x$ of Lemma~\ref{lem:6}}\\
&\leq\sum_{w\in\W}\pr(\up w)C&&\text{with the constant $C$ from Lemma~\ref{lem:6}}\\
&\leq C
  \end{align*}
  The latter inequality follows from the fact that the elementary
  cylinders $\up w$\,, for $w$ ranging over~$\W$, are pairwise
  disjoint since $W$ takes different values on different such
  cylinders.  The proof of Lemma~\ref{lem:7} is complete.
\end{proof}

Finally, we will use the following elementary analytic result.

\begin{lemma}
  \label{lem:3}
  Let $(X_k)_{k\geq1}$ be a sequence of real random variables defined
  on some common probability space $(\Omega,\FFF,\pr)$, and such
  that\/ $\esp|X_k|^2\leq C<\infty$ for some constant~$C$. Then
  $\lim_{k\to\infty}X_k/k=0$ holds $\pr$-almost surely.
\end{lemma}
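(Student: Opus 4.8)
The plan is to use the classical first Borel--Cantelli lemma together with Chebyshev's inequality; no independence of the $X_k$ is needed, only the uniform bound on their second moments.

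First I would fix an arbitrary $\varepsilon>0$ and estimate, for each $k\geq1$, the probability of the event $\{|X_k|>\varepsilon k\}$ by Chebyshev's inequality applied to the second moment:
\begin{gather*}
  \pr(|X_k|>\varepsilon k)=\pr\bigl(|X_k|^2>\varepsilon^2k^2\bigr)\leq\frac{\esp|X_k|^2}{\varepsilon^2k^2}\leq\frac{C}{\varepsilon^2k^2}\,.
\end{gather*}
Summing over $k$ and using that $\sum_{k\geq1}k^{-2}<\infty$, the series $\sum_{k\geq1}\pr(|X_k|>\varepsilon k)$ converges. The first Borel--Cantelli lemma then yields $\pr\bigl(|X_k|>\varepsilon k\text{ for infinitely many }k\bigr)=0$; equivalently, $\pr$-almost surely there is an index $k_0$ (depending on the sample point and on $\varepsilon$) such that $|X_k|/k\leq\varepsilon$ for all $k\geq k_0$.

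Finally I would remove the dependence on $\varepsilon$ by the usual countable-intersection argument: applying the previous step to $\varepsilon=1/m$ for every integer $m\geq1$ and intersecting the corresponding events of full $\pr$-measure (a countable intersection, hence still of full measure), one obtains that $\pr$-almost surely $\limsup_{k\to\infty}|X_k|/k\leq1/m$ for every $m$, that is, $\lim_{k\to\infty}X_k/k=0$.

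There is no genuine obstacle here: the statement is a soft consequence of Chebyshev and Borel--Cantelli. The only point worth underlining is that the hypothesis is a bound on $\esp|X_k|^2$ for each $k$ \emph{separately}, with no assumption on the joint law of the family $(X_k)_{k\geq1}$; this is exactly the form in which the lemma will be applied, namely to sequences of increments of the type $(|U|-|W|)^2$ controlled along the iterated stopping times via Lemma~\ref{lem:7}.
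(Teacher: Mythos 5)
Your proof is correct and follows essentially the same route as the paper: Chebyshev/Markov on the second moment gives the summable bound $C/(\varepsilon^2k^2)$, and Borel--Cantelli (the paper invokes it as the ``well known sufficient criterion'' for almost sure convergence) concludes. You merely spell out the countable-intersection step over $\varepsilon=1/m$, which the paper leaves implicit.
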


\begin{proof}
  Let $Y_k=X_k/k$\,.  To prove the \pas\ limit $Y_k\to0$, we use the
  following well known sufficient criterion:
  \begin{gather*}
    \forall\epsilon>0\qquad\sum_{k\geq1}\pr(|Y_k|>\epsilon)<\infty\,.
  \end{gather*}
Applying  Markov inequality yields:
\begin{gather*}
  \sum_{k\geq1}\pr(|Y_k|>\epsilon)=\sum_{k\geq1}\pr(|X_k|^2>k^2\epsilon^2)
  \leq\frac1{\epsilon^2}\sum_{k\geq1}\frac{C}{k^2}<\infty\,,
\end{gather*}
which shows the result.
\end{proof}

We now proceed with the proof of uniqueness of the limit in
Theorem~\ref{thr:2}. The setting is the following. Let $W$ be an
exhaustive \ast\ such that $\esp|W|<\infty$, let $(W_n)_{n\geq0}$ be
the associated sequence of iterated stopping times. By the first part
of the proof (\S~\ref{sec:proof-convergence}), we know that the
ergodic means $M_{W,n}\varphi$ converge \pas\ toward a constant, say
$M(W,\varphi)$.

Pick $a\in\Sigma$ a piece, and let $V$ be the first hitting time
of~$a$\,. Let $(V_n)_{n\geq0}$ be the associated sequence of iterated
stopping times, and let $M(V,\varphi)$ be the limit of the associated
ergodic means $M_{V,n}\varphi$\,. We shall prove that
$M(W,\varphi)=M(V,\varphi)$. This will conclude the proof of
Theorem~\ref{thr:2}.

We consider for each integer $j\geq0$ the following random
integer $K_j:\BM\to\bbN\cup\{\infty\}$:
\begin{align*}
  K_j(\xi)=\inf\bigl\{k\geq0\tq V_k(\xi)\geq W_j(\xi)\bigr\}\,,
\end{align*}
and the \ast\ $V'_j:\BM\to\Mbar$ defined by:
\begin{align*}
  V'_j(\xi)=
  \begin{cases}
V_{K_j(\xi)}(\xi)\,,&\text{if $K_j<\infty$\,,}\\
\xi\,,&\text{if $K_j=\infty$\,.}    
  \end{cases}
\end{align*}

Since $W_j\leq V'_j$ by construction, we put $\Delta_j=V'_j-W_j$\,, so
that $V'_j=W_j\cdot\Delta_j$\,. Then, by Lemma~\ref{lem:7}, there is a constant
$C\geq0$ such that:
\begin{gather*}
\forall j\geq0\qquad  \esp|\Delta_j|^2\leq C\,.
\end{gather*}

Hence, applying Lemma~\ref{lem:3} with $X_j=|\Delta_j|$\,, and since
$|\scal\varphi{\Delta_j}|\leq M|\Delta_j|$ if $M=\max\{
|\varphi(x)|\tq x\in\Sigma\}$\,, we
have:
\begin{align}
  \label{eq:39}
\pas\quad\lim_{j\to\infty}\frac{|\Delta_j|}j&=0\,,&\pas\quad\lim_{j\to\infty}\frac{\scal\varphi{\Delta_j}}{j}&=0\,.
\end{align}

We also have, according to the result of~\S~\ref{sec:proof-convergence}:
\begin{align}
\label{eq:33}
\pas\quad  \lim_{j\to\infty}\frac{|W_j|}j&=\esp|W|>0\,,&
\pas\quad\lim_{j\to\infty}\frac{\scal\varphi{W_j}}{|W_j|}&=M(W,\varphi)\,.
\end{align}

The ergodic means can be compared as follows:
\begin{align*}
  M_{V',\,j}\varphi-
  M_{W,j}\varphi&=\frac{\scal\varphi{W_j}+\scal\varphi{\Delta_j}}{|W_j|+|\Delta_j|}-
  \frac{\scal\varphi{W_j}}{|W_j|}\\
  &=\frac{1}{|W_j|+|\Delta_j|}\scal\varphi{\Delta_j}
  -\frac{|\Delta_j|}{|W_j|+|\Delta_j|}\cdot\frac{\scal\varphi{W_j}}{|W_j|}
\end{align*}
Using~(\ref{eq:39})(\ref{eq:33}), both terms in the right member above
go to~$0$, and therefore: $M(V',\varphi)=M(W,\varphi)$. 

But, since $\lim_{j\to\infty}K_j=\infty$, we clearly have
$M(V',\varphi)=M(V,\varphi)$, and thus finally:
$M(W,\varphi)=M(V,\varphi)$, which was to be shown. The proof of
Theorem~\ref{thr:2} is complete.

\section{A Cut-Invariant Law of Large Numbers\\ for Sub-Additive
  Functions}
\label{sec:cut-invariant-sub}

In~\S~\ref{sec:cut-invariant-law}, we have obtained a Strong law of
large numbers relative to functions of the kind
$\scal\varphi\cdot:\M\to\bbR$, which are additive by
construction---and any additive function on $\M$ is of this form. 

Interesting asymptotic quantities however are not always of this
form. For instance, the ratio between the \emph{length} and the
\emph{height} of heaps, $|x|/\height(x)$\,, has been introduced in
\cite{krob03,saheb89} as a measure of the \emph{speedup} in the
execution of asynchronous processes.% Indeed, if a heap $x$ represents
% the execution of an asynchronous process, that is to say, involving
% \emph{concurrent} actions rendered by commutativity relations between
% generators, then the height $\height(x)$ is interpreted as the
% \emph{concurrent} execution time of the process, while the length $|x|$
% represents its \emph{sequential} execution time.

The height function is sub-additive on~$\M$: $\height(x\cdot
y)\leq\height(x)+\height(y)$. This constitutes a motivation for
extending the Strong law of large numbers to sub-additive functions.
We shall return to the computation of the speedup
in~\S~\ref{sec:computing-speedup}, after having established a
convergence result for ergodic ratios with respect to sub-additive
functions (Theorem~\ref{thr:3}).

\subsection{Statement of the Law of Large Numbers for Sub-Additive Functions}
\label{sec:statement-law-large}

As for additive functions, we face the following issues:
\begin{inparaenum}[(1)]
\item\label{item:098} Define proper ergodic ratios with respect to a
  given \ast;
\item\label{item:0912091} Prove the almost sure convergence of these
  ratios;
  \item\label{item:00022} Study the uniqueness of the limit when the
    \ast\ varies.
\end{inparaenum}

We restrict the proof of uniqueness to \emph{first hitting times}
only.

\begin{theorem}
  \label{thr:3}
  Let a heap monoid $\M=\M(\Sigma,I)$ be equipped with a Bernoulli
  measure\/~$\pr$\,, and let $\varphi:\M\to\bbR$ be a sub-additive
  function, that is to say, $\varphi$~satisfies $\varphi(x\cdot
  y)\leq\varphi(x)+\varphi(y)$ for all $x,y\in\M$\,. We assume
  furthermore that $\varphi$ is non-negative on~$\M$\,.

  Let $a\in\Sigma$ be a piece of the monoid, and let $(V_n)_{n\geq0}$
  be the sequence of iterated stopping times associated with the first
  hitting time of~$a$.  Then the ratios ${\varphi(V_n)}/{|V_n|}$
  converge\/ \pas\ as $n\to\infty$\,, toward a constant which is
  independent of the chosen piece~$a$.
\end{theorem}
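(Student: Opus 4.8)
The plan is to reduce the sub-additive statement to the Kingman sub-additive ergodic theorem applied to the i.i.d.\ increments of the first hitting time, and then to transfer the limit between different pieces $a$ using the comparison technique already developed for Theorem~\ref{thr:2}. First I would fix the piece $a\in\Sigma$ and let $V$ be the first hitting time of $a$, with $(V_n)_{n\geq0}$ its iterated sequence and $(\Delta_k)_{k\geq1}$ the associated i.i.d.\ increments, so that $V_n=\Delta_1\cdot\ldots\cdot\Delta_n$. Define, for $0\le m\le n$, the quantities $\varphi_{m,n}=\varphi(\Delta_{m+1}\cdot\ldots\cdot\Delta_n)$. Sub-additivity of $\varphi$ gives $\varphi_{0,n}\le\varphi_{0,m}+\varphi_{m,n}$, and the i.i.d.\ property of the $\Delta_k$ makes the family $(\varphi_{m,n})$ stationary in the sense required by Kingman's theorem; integrability of $\varphi_{0,1}=\varphi(V)$ follows from $0\le\varphi(V)\le\max_{b}\varphi(b)\cdot|V|$ together with $\esp|V|<\infty$ (Proposition~\ref{prop:5}), using sub-additivity to bound $\varphi$ on a heap by the sum of its values on pieces. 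Kingman's theorem then yields $\varphi(V_n)/n\to L_a$ \pas\ for some constant $L_a\ge0$ (the system being ergodic since the increments are i.i.d.). Combining with the \pas\ limit $|V_n|/n\to\esp|V|>0$ from the additive case (\S\ref{sec:proof-convergence}, applied with $\varphi=\un$), we get $\varphi(V_n)/|V_n|\to L_a/\esp|V|=:M_a$ \pas.

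It remains to show $M_a$ does not depend on $a$. For this I would mimic the uniqueness argument of \S\ref{sec:uniqueness-limit}: pick two pieces $a$ and $a'$, let $V$ and $W$ be the corresponding first hitting times with iterated sequences $(V_n)$, $(W_n)$, and form the interpolating \ast\ $V'_j$ as in Lemma~\ref{lem:7}, i.e. $V'_j(\xi)=V_{K_j(\xi)}(\xi)$ where $K_j(\xi)=\inf\{k\ge0\tq V_k(\xi)\ge W_j(\xi)\}$. Writing $\Delta_j=V'_j-W_j$, Lemma~\ref{lem:7} gives $\esp|\Delta_j|^2\le C$, hence by Lemma~\ref{lem:3} we have $|\Delta_j|/j\to0$ \pas, and also $|W_j|/j\to\esp|W|>0$ \pas. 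Now sub-additivity gives the two-sided bound
\begin{gather*}
  \varphi(W_j)-\varphi(\Delta_j')\ \le\ \varphi(V'_j)\ \le\ \varphi(W_j)+\varphi(\Delta_j)\,,
\end{gather*}
where $\Delta_j'$ is defined through $W_j=V'_j\cdot(\text{something})$ — more carefully, since $W_j\le V'_j$, only the upper bound $\varphi(V'_j)\le\varphi(W_j)+\varphi(\Delta_j)$ is immediate; for the reverse one uses that $V'_j=W_j\cdot\Delta_j$ and applies sub-additivity in the other grouping only after writing $\varphi(W_j)\le\varphi(V'_j)+\varphi(\text{tail})$, which is not available directly. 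I would instead bound $|\varphi(V'_j)-\varphi(W_j)|$ by controlling the increment: since $\varphi$ is non-negative and sub-additive, $\varphi(V'_j)\le\varphi(W_j)+\varphi(\Delta_j)\le\varphi(W_j)+\bigl(\max_b\varphi(b)\bigr)|\Delta_j|$, and a symmetric estimate follows by viewing $W_j$ as a sub-heap of $V'_j$ and covering it by finitely many pyramidal pieces — this is the step I expect to require the most care.

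Granting the estimate $|\varphi(V'_j)-\varphi(W_j)|\le\kappa|\Delta_j|$ for a constant $\kappa$, divide by $|V'_j|=|W_j|+|\Delta_j|$ and use $|\Delta_j|/j\to0$, $|W_j|/j\to\esp|W|>0$, $\varphi(W_j)/|W_j|\to M_{a'}$ to conclude $\varphi(V'_j)/|V'_j|\to M_{a'}$. On the other hand $K_j\to\infty$ \pas\ (since $|W_j|\to\infty$ and $V$ is exhaustive), so along the subsequence $n=K_j$ the ratios $\varphi(V_n)/|V_n|$, which converge to $M_a$, also converge to $M_{a'}$; hence $M_a=M_{a'}$. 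The main obstacle, as flagged, is establishing the two-sided Lipschitz-type comparison $|\varphi(V'_j)-\varphi(W_j)|=O(|\Delta_j|)$ for a general non-negative sub-additive $\varphi$ — sub-additivity alone gives one direction cleanly, and for the other one must exploit the heap structure (decomposing $W_j$ as a join of boundedly many pyramidal heaps, each of which embeds in $V'_j$ up to a tail of size controlled by $|\Delta_j|$) much as in the proof of Lemma~\ref{lem:6}.
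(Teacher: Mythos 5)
Your convergence argument is sound and is essentially the paper's: the paper invokes Kingman's theorem through the ergodic shift $\theta_V$ (Lemma~\ref{lem:2}) rather than through the stationary array $\varphi_{m,n}$ built on the \iid\ increments, but the two formulations are interchangeable here, and your integrability bound $\varphi(V)\leq(\max_b\varphi(b))\,|V|$ is the same $\varphi(x)\leq C_1|x|$ estimate the paper uses. The division by $|V_n|/n\to\esp|V|>0$ is also as in the paper.

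The uniqueness part, however, has a genuine gap exactly at the step you flagged: the two-sided estimate $|\varphi(V'_j)-\varphi(W_j)|\leq\kappa|\Delta_j|$ is \emph{false} for a general non-negative sub-additive~$\varphi$, and no amount of exploiting the heap structure will rescue it. Sub-additivity gives an upper bound on $\varphi(x\cdot y)$ but no lower bound: take for instance $\varphi(x)=|x|$ if $x$ contains no occurrence of some fixed piece $d$, and $\varphi(x)=0$ otherwise; this is non-negative and sub-additive, yet $\varphi(x)-\varphi(x\cdot d)=|x|$ can be arbitrarily large while $|d|=1$. So the reverse inequality $\varphi(W_j)\leq\varphi(V'_j)+O(|\Delta_j|)$ cannot be established, and your plan of covering $W_j$ by pyramidal sub-heaps of $V'_j$ does not address the real obstruction, which is analytic rather than geometric.

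The fix is short and is what the paper does in~$(\ddag)$ of \S~\ref{sec:proof-uniqueness}: you do not need the two-sided bound. The one direction that sub-additivity does give, namely
\begin{gather*}
  \frac{\varphi(V'_j)}{|V'_j|}-\frac{\varphi(W_j)}{|W_j|}
  \leq\frac{\varphi(\Delta_j)}{|W_j|+|\Delta_j|}
  +\frac{|\Delta_j|}{|W_j|+|\Delta_j|}\cdot\frac{\varphi(W_j)}{|W_j|}\,,
\end{gather*}
combined with $\varphi(\Delta_j)\leq C_1|\Delta_j|$, $|\Delta_j|/j\to0$ and $|W_j|/j\to\esp|W|>0$, already yields $\limsup_j\bigl(\varphi(V'_j)/|V'_j|-\varphi(W_j)/|W_j|\bigr)\leq0$, hence $M_a\leq M_{a'}$. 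Since both $V$ and $W$ are first hitting times, you may then swap the roles of $a$ and $a'$ to get $M_{a'}\leq M_a$, and conclude $M_a=M_{a'}$ by symmetry. (This is also why the paper restricts the uniqueness claim to first hitting times: for a general exhaustive \ast\ $W$ one only obtains the inequality $M_a\leq MW$, not equality.) With this replacement your argument goes through.
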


We gather into two separate subsections the proof of convergence
(\S~\ref{sec:proof-convergence-1}), and the proof that the limit is
independent of the chosen piece (\S~\ref{sec:proof-uniqueness}).

\subsubsection{Proof of Convergence}
\label{sec:proof-convergence-1}

The proof is based on Kingman sub-additive Ergodic Theorem, of which
we shall use the following formulation~\cite{steele89}: let
$(\Omega,\FFF,\pr)$ be a probability space, let $T:\Omega\to\Omega$ be
a measure preserving and ergodic transformation, and let
$(g_n)_{n\geq1}$ be a sequence of integrable real-valued functions
satisfying $g_{n+m}\leq g_n+g_m\circ T^n$ for all integers
$n,m\geq1$\,. Then $g_n/n$ converge \pas\ toward a constant
$g\geq-\infty$.

\begin{lemma}
  \label{lem:2}
  If $V:\BM\to\Mbar$ is an exhaustive \ast, then the shift operator
  $\theta_V:\BM\to\BM$ which is\/ \pas\ defined on~$\BM$, is measure
  preserving and ergodic.
\end{lemma}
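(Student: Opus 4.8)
I would treat the two assertions --- invariance of $\pr$ under $\theta_V$ and ergodicity --- separately and in that order, since ergodicity is a property of a measure-preserving map. Throughout, $V$ being exhaustive makes it \pas\ finite, hence so is every iterate $V_n$ (by Proposition~\ref{prop:3}), so that $\theta_V$ and, more generally, $\theta_V^{\,n}$ are defined on a set of full $\pr$-measure; all maps below are understood \pas-defined and all equalities between them are meant \pas.

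For measure invariance, the plan is to check $\pr\bigl(\theta_V^{-1}(\up y)\bigr)=\pr(\up y)$ on every elementary cylinder $\up y$, $y\in\M$, and then extend to all of $\FFF$ by a $\pi$-$\lambda$ argument, the cylinders being a $\pi$-system generating~$\FFF$. The cylinder identity is immediate from the Strong Bernoulli property (Theorem~\ref{thr:1}): applied to the \ast~$V$ (which is \pas\ finite) and the bounded function $\psi=\un_{\up y}$ it gives $\esp(\un_{\up y}\circ\theta_V\mid\FFF_V)=\pr(\up y)$, and taking expectations yields $\pr(\theta_V^{-1}(\up y))=\pr(\up y)$. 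Equivalently one can compute directly, partitioning $\BM$ into the atoms $\up x=\{V=x\}$, $x\in\V$, using $\theta_V^{-1}(\up y)\cap\up x=\up(x\cdot y)$ together with multiplicativity of $\pr$ and $\sum_{x\in\V}\pr(\up x)=1$.

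For ergodicity, let $A\in\FFF$ be invariant, $\theta_V^{-1}(A)=A$. The first step is to show $\theta_V^{\,n}=\theta_{V_n}$ \pas\ by induction on~$n$: granting $\xi=V_n(\xi)\cdot\theta_V^{\,n}(\xi)$ and writing $\zeta=\theta_V^{\,n}(\xi)$, one has $\zeta=V(\zeta)\cdot\theta_V(\zeta)$ by definition of the shift and $V_{n+1}(\xi)=V_n(\xi)\cdot V(\zeta)$ by definition of the iterated stopping times, so $\xi=V_{n+1}(\xi)\cdot\theta_V(\zeta)$, i.e.\ $\theta_V^{\,n+1}(\xi)=\theta_{V_{n+1}}(\xi)$ by cancellativity. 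Hence $\theta_{V_n}^{-1}(A)=A$ for every~$n$, and the Strong Bernoulli property applied to the \ast~$V_n$ (an \ast\ by Proposition~\ref{prop:4}, \pas\ finite) with $\psi=\un_A$ gives, since $\un_A\circ\theta_{V_n}=\un_A$, that $\esp(\un_A\mid\FFF_{V_n})=\pr(A)$ \pas, a \emph{constant}, for every~$n$. To conclude I would let $n\to\infty$: the $\FFF_{V_n}$ form an increasing filtration with $\bigvee_{n\ge0}\FFF_{V_n}=\FFF$ (this last equality being exactly exhaustiveness, Lemma~\ref{lem:1}), so the martingale convergence theorem \cite[Th.~35.6]{billingsley95} gives $\esp(\un_A\mid\FFF_{V_n})\to\un_A$ \pas; since the left-hand side is the constant $\pr(A)$, we get $\un_A=\pr(A)$ \pas, whence $\pr(A)\in\{0,1\}$.

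The one point deserving care --- the main obstacle --- is the monotonicity $\FFF_{V_n}\subseteq\FFF_{V_{n+1}}$ needed for the martingale step. I would derive it from the \ast\ axiom of $V_n$: for $w$ a finite value of $V_{n+1}$ one has $\up w=\{V_{n+1}=w\}$, and for every $\xi\in\up w$ the heap $V_n(\xi)$ is finite (it is $\le V_{n+1}(\xi)=w$) and satisfies $V_n(\xi)\le w\le\xi'$ for all $\xi'\in\up w$, so property~(\ref{item:7}) of Definition~\ref{def:3} forces $V_n$ to be constant on the atom~$\up w$; hence each $\{V_n=x\}=\up x$ (for $x$ a finite value of $V_n$) is a countable union of atoms of $\FFF_{V_{n+1}}$, giving the inclusion. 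The remaining details --- the $\pi$-$\lambda$ extension in the first part, and the harmless identification of strictly $\theta_V$-invariant sets with $\pr$-mod-null invariant ones, legitimate once $\theta_V$ is known measure preserving --- are routine.
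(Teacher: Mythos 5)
Your proposal is correct and follows essentially the same route as the paper: invariance is checked on elementary cylinders by partitioning over the finite values of $V$ and using multiplicativity of $\pr$, and ergodicity comes from combining the Strong Bernoulli property for the iterates $V_n$ with the martingale convergence theorem and the exhaustiveness identity $\FFF=\bigvee_n\FFF_{V_n}$. The only difference is that you spell out two steps the paper leaves implicit (the identity $\theta_{V_n}=\theta_V^{\,n}$ and the monotonicity of the filtration $(\FFF_{V_n})$, the latter holding up to $\pr$-null sets), which is a welcome addition rather than a deviation.
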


\begin{proof}
  To prove that $\theta_V$ is $\pr$-invariant, it is enough to show
 $\pr(\theta_V^{-1}(\up x))=\pr(\up x)$ for all heaps
  $x\in\M$. Let $x\in\M$. The equality $\xi=\xi_V\cdot\theta_V(\xi)$
  holds \pas\ since $V<\infty$ $\pr$-almost surely. Therefore,
  denoting by $\V$ the set of finite values assumed by~$V$, one has:
\begin{align*}
\pas\quad  \theta_V^{-1}(\up x)&=\bigcup_{v\in\V}\up(v\cdot x)\,.
\end{align*}

The cylinders $\up v$\,, for $v$ ranging over~$\V$\,, are pairwise
disjoint, since $V$ assumes distinct values on each of them. Hence,
passing to the probabilities and using the Bernoulli property:
\begin{align*}
  \pr\bigl(\theta_V^{-1}(\up x)\bigr)&=\pr(\up x)\sum_{v\in\V}\pr(\up
  v)=\pr(\up x)\pr(|V|<\infty)=\pr(\up x)\,.
\end{align*}
This proves that $\theta_V$ is $\pr$-invariant.

We now show the ergodicity of~$\theta_V$\,.  Let $f:\BM\to\bbR$ be a
bounded measurable and $\theta_V$-invariant function. Since $V$ is
exhaustive, $\FFF=\bigvee_{n\geq1}\FFF_{V_n}$ by
Lemma~\ref{lem:1}. Hence, by the Martingale convergence theorem
\cite[Th.~35.6]{billingsley95}:
\begin{gather}
\label{eq:35}
  f=\lim_{n\to\infty}\esp(f|\FFF_{V_n})\quad\pas
\end{gather}

Since $V_n\in\M$ with probability~$1$, the Strong Bernoulli property
(Theorem~\ref{thr:1}) implies:
\begin{align*}
  \esp(f\circ\theta_{V_n}|\FFF_{V_n})&=\esp(f)\quad\pas
\end{align*}
But, since $f$ is assumed to be $\theta_V$-invariant, and noting that
$\theta_{V_n}=(\theta_V)^n$ by construction, the above writes as:
$\esp(f|\FFF_{V_n})=\esp(f)$\,, which yields $f=\esp(f)$
by~(\ref{eq:35}), proving the ergodicity of~$\theta_V$\,.
\end{proof}

We now prove the following result, which is slightly strongest than
the convergence part in the statement of Theorem~\ref{thr:3}:

\begin{intermediate}{$(\dag)$}
  For every exhaustive \ast\ $V:\BM\to\Mbar$, if $(V_n)_{n\geq1}$ is
  the sequence of iterated stopping times associated with~$V$, the
  sequence $\varphi(V_n)/|V_n|$ is \pas\ convergent, toward a
  constant.
\end{intermediate}

Since, by Proposition~\ref{prop:5}, first hitting times are
exhaustive, this statement implies indeed the convergence statement in
Theorem~\ref{thr:3}.

For the proof of~$(\dag)$, let $g_n=\varphi(V_n)$ for $n\geq0$. An
easy induction shows that for any integers $n,m\geq0$, one has:
\begin{align*}
  V_{n+m}&=V_n\cdot(V_m\circ\theta_{V_n})\,,&\theta_{V_n}&=(\theta_V)^n\,,
\end{align*}
and thus by sub-additivity of~$\varphi$\,: $g_{n+m}\leq g_n+g_m\circ
(\theta_V)^n$\,.

The application of Kingman sub-additive Ergodic Theorem recalled above
is permitted by the measure-preserving property and the ergodicity
of~$\theta_V$ proved in Lemma~\ref{lem:2}. It implies the \pas\
convergence of $g_n/n=\varphi(V_n)/n$ toward a constant. Since
$\lim_{n\to\infty}|V_n|/n=\esp|V|$ with probability~$1$ by
Theorem~\ref{thr:2}, we deduce the \pas\ convergence of the ratios
$\varphi(V_n)/|V_n|$ as $n\to\infty$ toward a constant, which
proves~$(\dag)$.

\subsubsection{Proof of Uniqueness}
\label{sec:proof-uniqueness}

To complete the proof of Theorem~\ref{thr:3}, it remains only to show
that the limit of the ratios $\varphi(V_n)/|V_n|$ is independent of
the \ast~$V$, that is to say, of the piece for which $V$ is the first
hitting time. For this, we first show the following result:

\begin{intermediate}{$(\ddag)$}
  Let $\varphi:\M\to\bbR$ be a sub-additive and non-negative
  function. Let\/ $W:\BM\to\Mbar$ be an \ast\ such that\/
  $\esp|W|<\infty$, let\/ $(W_n)_{n\geq0}$ be the associated sequence
  of iterated stopping times, and let\/ $MW$ be the\/ \pas\ limit of\/
  $\varphi(W_n)/|W_n|$\,. Let also $V$ be the first hitting time of
  some piece~$a$, let\/ $(V_n)_{n\geq0}$ be the associated sequence of
  iterated stopping times, and let\/ $MV$ be the\/ \pas\ limit of\/
  $\varphi(V_n)/|V_n|$\,. Then\/ $MV\leq MW$\,.
\end{intermediate}

For the proof of~$(\ddag)$, we follow the same line of proof as for
the uniqueness in the proof of Theorem~\ref{thr:2}
(\S~\ref{sec:uniqueness-limit}). Using the very same notations for
$V'_n$ and~$\Delta_n$\,, we have $V'_n=W_n\cdot\Delta_n$\,, and thus:
\begin{align*}
\frac{\varphi(V'_n)}{|V'_n|}-\frac{\varphi(W_n)}{|W_n|}  &
=\underbrace{\frac{\varphi(W_n\cdot\Delta_n)-\varphi(W_n)}{|W_n|+|\Delta_n|}}_{A_n}
- \underbrace{\frac{|\Delta_n|\varphi(W_n)}{|W_n|(|W_n|+|\Delta_n|)}}_{B_n}
\end{align*}

The sub-additivity of~$\varphi$ and the existence of the \CF\
decomposition of heaps shows that $\varphi(x)\leq C_1x$ for all
$x\in\M$, and for some real constant~$C_1$\,. Therefore, using again
the sub-additivity of~$\varphi$, we obtain:
\begin{align*}
  A_n&\leq C_1\frac{|\Delta_n|}{|W_n|+|\Delta_n|}\,,&\text{and
    thus:\quad}\limsup_{n\to\infty} A_n&\leq0\,.
\end{align*}

The ratios $\varphi(W_n)/|W_n|$ being bounded since they have a finite
limit, it is clear that the terms $B_n$ converge to~$0$. We deduce:
\begin{gather*}
  \limsup_{n\to\infty}\Bigl(\frac{\varphi(V'_n)}{|V'_n|}-\frac{\varphi(W_n)}{|W_n|}\Bigr)\leq0\,.
\end{gather*}

But the ratios $\varphi(V_n)/|V_n|$ also have a limit, and clearly
$\lim\varphi(V'_n)/|V'_n|=\lim\varphi(V_n)/|V_n|$\,. Hence we obtain:
\begin{gather*}
  \lim_{n\to\infty}\frac{\varphi(V_n)}{|V_n|}\leq
  \lim_{n\to\infty}\frac{\varphi(W_n)}{|W_n|} \,,
\end{gather*}
which proves~$(\ddag)$. 

\medskip It is now clear that if both $V$ and $W$ are first hitting
times, then $MV=MW$ since $MV\leq MW$ and $MW\leq MV$ by applying
$(\ddag)$ twice. This completes the proof of Theorem~\ref{thr:3}.

\subsection{Computing the Speedup}
\label{sec:computing-speedup}

Let us define the \emph{speedup} of the pair $(\M,\pr)$\,, where $\pr$
is a Bernoulli measure on the boundary $\BM$ of a heap monoid~$\M$, as
the \pas\ limit of the inverse of the ergodic ratios:
\begin{gather*}
\pas\quad  \rho=\lim_{n\to\infty}\frac{|V_n|}{\height(V_n)}\,,
\end{gather*}
where $V$ is the first hitting time associated with some piece of the
monoid. The greater the speedup, the more the parallelism is
exploited.

Based on generating series techniques, the authors of~\cite{krob03}
obtain an expression for a similar quantity for the particular case of
uniform measures. With Bernoulli measure, we obtain a more intuitive
formula, easier to manipulate for algorithmic approximation purposes.

\begin{proposition}
  \label{prop:6}
The speedup is given by:
\begin{gather}
\label{eq:8}
\rho=\sum_{c\in\Cstar}\pi(\gamma)|\gamma|\,,
\end{gather}
where $\pi$ is the invariant measure of the Markov chain of cliques
under the probability measure\/~$\pr$\,. 
\end{proposition}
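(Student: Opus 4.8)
The plan is to compute $\rho$ through the law of large numbers for the ergodic Markov chain of cliques $(C_k)_{k\geq1}$ from Section~\ref{sec:bernoulli-measures}, using the convenient exhaustive \ast\ of point~\ref{item:8} of Proposition~\ref{prop:2} (cut at the first maximal clique), which I call $U$; by Proposition~\ref{prop:5} it is exhaustive with $\esp|U|<\infty$. First I would record the structural fact that a maximal clique $\gamma$ of $(\C,\to)$ supports every clique, i.e.\ $\gamma\to\gamma'$ for all $\gamma'\in\C$ (any piece outside $\gamma$ depends on some piece of $\gamma$ by maximality, and any piece of $\gamma$ depends on itself). Hence, when the successive increments of $U$ are concatenated, no piece ever falls below the maximal clique topping the previous increment, so the Cartier--Foata decomposition of $U_n(\xi)$ is the concatenation of those of the increments. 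Exactly as in the proof of Corollary~\ref{cor:1} this gives that $U_n(\xi)$ has \CF\ decomposition $(C_1,\dots,C_{K_n})$ for integers $K_n$ with $K_n\to\infty$ \pas; therefore $\height(U_n)=K_n$ and $|U_n|=|C_1|+\dots+|C_{K_n}|$, and the law of large numbers for the ergodic chain $(C_k)_{k\geq1}$ (invariant measure $\pi$) yields
\begin{gather*}
\pas\qquad\lim_{n\to\infty}\frac{|U_n|}{\height(U_n)}=\lim_{n\to\infty}\frac1{K_n}\sum_{k=1}^{K_n}|C_k|=\sum_{\gamma\in\Cstar}\pi(\gamma)|\gamma|\,.
\end{gather*}

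Next I would argue that this limit equals $\rho$, i.e.\ the limit of $|V_n|/\height(V_n)$ for a first hitting time $V$ (well defined by Theorem~\ref{thr:3}; since $|x|/\alpha\leq\height(x)\leq|x|$ with $\alpha$ the maximal size of a clique, $\rho$ is a finite number in $[1,\alpha]$). Write $MW=\lim\height(W_n)/|W_n|$ for an exhaustive \ast\ $W$ with $\esp|W|<\infty$. The intermediate result~$(\ddag)$ applied with $W=U$ gives $1/\rho=MV\leq MU=\bigl(\sum_{\gamma\in\Cstar}\pi(\gamma)|\gamma|\bigr)^{-1}$, hence $\rho\geq\sum_{\gamma\in\Cstar}\pi(\gamma)|\gamma|$. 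For the reverse inequality I would re-run the comparison of~$(\ddag)$ with $U$ now in the role there played by the first hitting time: the only thing to re-establish is the analogue of Lemmas~\ref{lem:6}--\ref{lem:7} for $U$, namely a constant $C$ with $\esp\bigl((|U'_x|-|x|)^2\mid\up x\bigr)\leq C$ for every heap $x\neq0$, where $U'_x$ denotes the first $U$-iterate that reaches~$x$. This holds because, for $\xi\in\up x$, any order ideal of $\xi$ of height $h$ lies in the bottom $h$ \CF-layers of $\xi$, so the number of \CF-layers of $\xi$ below $x$ is exactly $\height(x)$; consequently $|U'_x|-|x|$ is the number of pieces of $\xi$ sitting in the bottom $\height(x)$ layers but not in $x$, plus the total size of the cliques strictly above layer $\height(x)$ up to the next maximal clique. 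The second term has second moment bounded uniformly in $x$ because the clique chain is finite and ergodic (uniformly bounded hitting-time moments to the set of maximal cliques). The first term is bounded in $L^2$ uniformly in $x$ by the light combinatorial tails of Bernoulli measures — the estimate $\sum_{|y|=n}f(y)\leq M$ from the proof of Lemma~\ref{lem:4} together with the Hat Lemma, as in Lemma~\ref{lem:5} — since only finitely many pieces of $\xi-x$ can fall into the bottom region of $\xi$ and their number has exponential tails. This yields $MU\leq MV$, hence $\rho=\sum_{\gamma\in\Cstar}\pi(\gamma)|\gamma|$.

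I expect the main obstacle to be exactly this last step: transporting the uniqueness argument of Theorem~\ref{thr:3}, which is proved only among first hitting times, so that it also covers the max-clique \ast\ $U$ — concretely, the uniform $L^2$ bound on the $U$-overshoot sketched above. Everything else (the Cartier--Foata structure of the iterates $U_n$ and the Markov-chain law of large numbers, as already used in Corollary~\ref{cor:1}) is routine given the results already in the paper.
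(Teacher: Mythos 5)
Your proposal is correct in outline, and its first half coincides with the paper: the paper also takes the max-clique \ast\ $W$ of point~1 of Proposition~\ref{prop:2}, identifies its iterates with Cartier--Foata prefixes $Y_{K_n}$ (your observation that a maximal clique supports every clique is exactly the justification needed), computes $MW=\bigl(\sum_\gamma\pi(\gamma)|\gamma|\bigr)^{-1}$ by the Markov-chain law of large numbers, and gets $\rho^{-1}=MV\leq MW$ from~$(\ddag)$. Where you diverge is the reverse inequality. The paper does \emph{not} transport the overshoot machinery of Lemmas~\ref{lem:6}--\ref{lem:7} to $W$; instead it exploits a feature specific to the height function and to CF prefixes: setting $\tau_n=\height(V_n)$, one has $V_n\leq Y_{\tau_n}$, so $Y_{\tau_n}$ has the same height as $V_n$ and no lesser length, whence $\height(Y_{\tau_n})/|Y_{\tau_n}|\leq\height(V_n)/|V_n|$ termwise and $MW\leq\rho^{-1}$ with no moment estimates at all. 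Your route is heavier but buys more: it would extend the uniqueness statement of Theorem~\ref{thr:3} to the max-clique \ast\ for an arbitrary sub-additive $\varphi$, not just for the height. It does go through, but the one step you flag as the obstacle is indeed the one left thin: the uniform $L^2$ bound on $|Y_{\height(x)}|-|x|$ conditionally on $\up x$. The missing structural fact that makes it work is this: if $w=Y_{\height(x)}-x$, then every piece occurring in $w$ must be independent of every piece of the top CF clique $\gamma$ of $x$ (otherwise that occurrence would sit strictly above height $\height(x)$ in $x\cdot w$); in particular $w$ is a left divisor of $\xi-x$ lying in the sub-monoid $\M'_b$ for any $b\in\gamma$, and Lemma~\ref{lem:5} then gives a second-moment bound depending only on $b$, hence uniform since $\Sigma$ is finite. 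With that inserted, your argument is complete; without it, ``only finitely many pieces can fall into the bottom region'' is an assertion rather than a proof, since the number of available holes in $x$ grows with $\height(x)$.
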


\begin{proof}
  Let $W$ be the \ast\ defined in point~\ref{item:8} of
  Proposition~\ref{prop:2}. Then $W$ is exhaustive and satisfies
  $\esp|W|<\infty$ according to Proposition~\ref{prop:5}. Let
  $(W_n)_{n\geq0}$ be the associated sequence of iterated stopping
  times. Then, since the height $\height(\cdot)$ is sub-additive, it
  follows from $(\dag)$ in~\S~\ref{sec:proof-convergence-1} that the
  ratios $\height(W_n)/|W_n|$ converge \pas\ toward a constant,
  say~$MW$. Furthermore, according to $(\ddag)$
  in~\S~\ref{sec:proof-uniqueness}, $\rho^{-1}\leq MW$\,. Hence, to
  complete the proof of the proposition, it is enough to show the
  following two points:
  \begin{enumerate}
  \item\label{item:18}
    $MW=\bigl(\sum_{\gamma\in\Cstar}\pi(\gamma)|\gamma|\bigr)^{-1}$\,.
  \item\label{item:19} $MW\leq\rho^{-1}$\,.
  \end{enumerate}

  \emph{Proof of point~\ref{item:18}.}\quad For $\xi\in\BM$ an
  infinite heap given by $\xi=(\gamma_i)_{i\geq1}$\,, let $Y_n\in\M$
  be defined for each integer $n\geq0$ by
  $Y_n=\gamma_1\cdot\ldots\cdot\gamma_n$\,. For each integer $n\geq0$,
  there is an integer $K_n$ such that $W_n=Y_{K_n}$\,, and
  $\lim_{n\to\infty}K_n=\infty$\,. Therefore:
  \begin{gather}
\label{eq:38}
MW=\lim_{n\to\infty}\frac{\height(Y_{K_n})}{|Y_{K_n}|}\,.
  \end{gather}

But we have $\height(Y_j)=j$ for each integer $j\geq1$. Therefore,
applying the Strong law of large numbers~\cite{chung60} to the ergodic Markov
chain~$(C_n)_{n\geq1}$\,, we get:
\begin{align}
\label{eq:17}
  \frac{\height(Y_j)}{|Y_j|}&=\frac j{|Y_j|}=\frac
  j{|C_1|+\dots+|C_j|}
\to_{j\to\infty}
  \Bigl(\sum_{\gamma\in\Cstar}\pi(\gamma)|\gamma|\Bigr)^{-1}\,.
\end{align}
Point~\ref{item:18} results from~(\ref{eq:38}) and~(\ref{eq:17}).

\medskip\emph{Proof of point~\ref{item:19}.}\quad For each integer
$n\geq0$, let $\tau_n=\height(V_n)$\,. Then the heap $Y_{\tau_n}$ has
same height as~$V_n$\,, and has no lesser length. Therefore the ratios
satisfy:
\begin{gather*}
%  \frac{\height(V_n)}{|V_n|}\leq
\frac{\height(Y_{\tau_n})}{|Y_{\tau_n}|}=\frac{\height(V_n)}{|Y_{\tau_n}|}\leq\frac{\height(V_n)}{|V_n|}
\,.
\end{gather*}
Passing to the limit, we obtain $MW\leq \rho^{-1}$\,, completing the
proof.
\end{proof}

For the example monoid $\T=\langle a,b,c\;|\;ab=ba\rangle$ equipped
with the uniform measure $\pr$ given by $\pr(\up x)=p^{|x|}$ with
$p=(3-\sqrt5)/2$\,, the computation goes as follows. Referring to the
computations already performed in~\S~\ref{sec:direct-comp-intr}, the
invariant measure $\pi$ is:
\begin{gather*}
  \pi=\frac1{2p+1}
  \begin{pmatrix}
\begin{array}{@{}r@{\,}c@{\,}l}    p&\\p\\-3p&+&2\\3p&-&1
\end{array}
\end{pmatrix}
  \begin{array}{c}
    a\\b\\c\\ab
  \end{array}
\end{gather*}

According to Proposition~\ref{prop:6}, the speedup is:
\begin{align*}
  \rho&=\pi_a+\pi_b+\pi_c+2\pi_{ab}=\frac{5p}{2p+1}
=\frac{5(7-\sqrt5)}{22}\approx1.0827\cdots
\end{align*}
% In particular $\rho^{-1}=(7+\sqrt5)/10$, which coincides indeed with
% the value $\lambda_\M$ found in~\cite[Appendix~B]{krob03}.

Our method allows for robust algorithmic approximation of the speedup,
through the following steps:
\begin{inparaenum}[\itshape 1.]
\item Approximating the root of the M\"obius polynomial;
\item Determining the invariant measure of the matrix~(\ref{eq:21});
\item Computing the speedup through formula~(\ref{eq:8}).
\end{inparaenum}

%\bibliographystyle{plain}
%\bibliography{LLN}
\printbibliography

\end{document}